\startlocaldefs \numberwithin{equation}{section}
\theoremstyle{plain}
\newtheorem{theorem}{Theorem}[section]
\newtheorem{corollary}{Corollary}[section]
\newtheorem{lemma}{Lemma}[section]
\def\@bysame#1{\vrule height 1.5pt depth -1pt width 3em \hskip
0.5em\relax}
\newcommand{\N}{ \mathbb{N} }
\newcommand{\Z}{ \mathbb{Z} }
\newcommand{\R}{ \mathbb{R} }
\newcommand{\trunc}[1]{ {\lfloor #1 \rfloor} }
\newcommand{\wh}[1]{ \widehat{ #1 } }
\newcommand{\wt}[1]{ \widetilde{ #1 } }
\newcommand{\eins}{{ 1}}
\newcommand{\Var}{{\mbox{ $\mathbb V\textnormal{ar}$\,}}}
\newcommand{\Cov}{{\mbox{ $\mathbb C\textnormal{ov}$\,}}}
\begin{document}

\begin{frontmatter}
\title{Estimation of the Asymptotic Variance of Univariate and Multivariate Random Fields and Statistical Inference }
\runtitle{Estimation of the Asymptotic Variance for Random Fields}


\begin{aug}
\author{\fnms{Annabel} \snm{Prause} 
\ead[label=e2]{prause@stochastik.rwth-aachen.de}}
\author{\fnms{Ansgar} \snm{Steland}
\ead[label=e1]{steland@stochastik.rwth-aachen.de}}
\ead[label=u1,url]{www.stochastik.rwth-aachen.de}

\runauthor{A. Prause, A. Steland}

\address{Institute of Statistics\\ RWTH Aachen University \\ W\"ullnerstr. 3, D-52056 Aachen, Germany\\
\printead{e1}} \printaddresses \bigskip

\date{August 2017} 
\end{aug}

January, 2018

\begin{abstract}
Correlated random fields are a common way to model dependence structures in high-dimensional data, especially for data collected in imaging. One important parameter characterizing the degree of dependence is the asymptotic variance which adds up all autocovariances in the temporal and spatial domain. Especially, it arises in the standardization of test statistics based on partial sums of random fields and thus the construction of tests requires its estimation.
In this paper we propose consistent estimators for this parameter for strictly stationary  $\varphi$-mixing random fields with arbitrary dimension of the domain and taking values in a Euclidean space of arbitrary dimension, thus allowing for multivariate random fields. We establish consistency, provide central limit theorems and show that distributional approximations of related test statistics based on sample autocovariances of random fields can be obtained by the subsampling approach.

As in applications the spatial-temporal correlations are often quite local, such that a large number of autocovariances vanish or are negligible, we also investigate a thresholding approach where sample autocovariances of small magnitude are omitted. Extensive simulation studies show that the proposed estimators work well in practice and, when used to standardize image test statistics, can provide highly accurate image testing procedures. 
\\[0.2cm]
\textbf{MSC 2010 subject classifications:} 62H86, 62E20, 60G60, 62H12.
\\[0.2cm]
\textbf{Keywords and phrases:} Data science, high-dimensional data, imaging, long-run variance estimation, , thresholding estimator, subsampling, random field, spatial statistics, testing.
\end{abstract}
\end{frontmatter}

\section{Introduction}

Random fields are a natural approach to model high-dimensional data. They arise in a natural way when analyzing digitized image data as arising in medical imaging, e.g. MRI or CT images, or in industrial quality control, e.g. images of materials obtained from cameras capturing the visual or infrared spectrum or electroluminescence images of solar cells, in order to analyze the structure of the material of interest and to detect defects. Although the main results go beyond that scope and even allow to treat, for example, voxel-by-voxel multivariate brain data collected at a sample of individuals, let us first stick to the following setting, in order to motivate the approach, outline basic ideas and fix notations: Consider a sequence of $ n_1 $ images represented by $ n_2 \times n_3 $ dimensional matrices 
\begin{equation}
\label{SeqImages}
 ( Y_{(i_1,i_2,i_3)} )_{(i_2,i_3) \in (1:n_2) \times (1:n_3)}, \qquad  i_1 = 1, \dots, n_1, 
\end{equation}
of real-valued random variables, where for any $q \in \N $ we put $ {\bm a}:{\bm b} = \times_{i=1}^q \{ a_i, \dots, b_i \} $ for $ {\bm a} = (a_1, \dots, a_q), {\bm b} = (b_1, \dots, b_q) \in \N^q $ and $ {\bm 1} = (1, \dots, 1) \in \N^q $. The sequence (\ref{SeqImages}) of matrices can be seen as the corresponding subset $ \{ Y_{\bm i} \}_{{\bm i} \in {\bm 1}:{\bm n} } $  of a three-dimensional random field $ \{ Y_{\bm i} : \bm i \in \Z^3 \} $ of random variables defined on a common probability space.  A statistical test to check whether or not one observes a reference image
$ {\bm m}^{(0)} = ( m^{(0)}_{(i_2,i_3)} )_{(i_2,i_3) \in {\bm 1}:(n_2,n_3)} $ can be based on the sum
\[
  S_{\bm n} = \sum_{i_1=1}^{n_1} \sum_{i_2=1}^{n_2} \sum_{i_3 = 1}^{n_3 } 
  ( Y_{\bm i} - m^{(0)}_{(i_2,i_3)} ), \qquad \bm n = (n_1, n_2, n_3) \in \N^3.
\]
In order to test for the presence of a known reference series of images
$ m^{(0)}_{\bm i} $, $ {\bm i} \in {\bm 1}:{\bm n} $, one simply replaces $ m^{(0)}_{(i_2,i_3)} $ by $ m^{(0)}_{\bm i} $. If the null hypothesis $ H_0: \mathbb E(Y_{\bm i}) = m^{(0)}_{\bm i} $ $ (\forall {\bm i} \in {\bm 1}:{\bm n} )$ holds true, 
under fairly weak conditions on the error random field $ \xi_{\bm i} = Y_{\bm i} - \mathbb E( Y_{\bm i} ) $, $ {\bm i} \in \Z^q $, the partial sum scaled by the squared root of $ |\bm n| = \prod_i n_i $ converges weakly (in distribution) to a Gaussian law, 
\begin{equation}
\label{CLTRF}
  \frac{1}{ \sqrt{ | {\bm n} |}} S_{\bm n} \Rightarrow \sigma B({\bm 1}), 
\end{equation}
as $ \bm n \to \infty $, where from now on $ \bm n \to \infty $ is understood as $  \min_j n_j \to \infty $. Here
$ B(x,y,z) $, $ x,y,z \ge 0 $, is a standard Brownian motion in dimension $3$. For example, (\ref{CLTRF}) has been shown for weakly stationary  linear processes, see \cite{MarinucciPoghosyan2001} and \cite{Paulauskas2010}, strictly stationary $ \varphi $-mixing fields on which we shall focus in the present paper, see \cite{Deo}, or, using other notions of weak dependence, in 
\cite{BulinskiKaene}, \cite{BerkesMorrow}, \cite{WangWoodroofe}, \cite{MachkouriVonlyWu} and \cite{BulinskiShaskin2006}. 

The asymptotic variance $ \sigma^2 $ is given by $ \sigma^2 = \lim_{\bm n \to \infty} \sigma_{\bm n}^2 $, where
\begin{align*}
\sigma_{\bm n}^2 & = \text{Var}\left( \frac{1}{ | {\bm n} |} \sum_{ {\bm i} \in 1:{\bm n}} {\xi}_{\bm i} \right)
\end{align*} 
For a weakly stationary random field it holds
\begin{align*}
\sigma_{\bm n}^2
& = 
\sum_{-(\bm n - \bm 1) \le \bm \ell \le \bm n - \bm 1}
\prod_{j=1,2,3} \frac{ n_j - |\ell_j| }{ n_j }  \mathbb E( \xi_{\bm 0} \xi_{\bm \ell} ),
\end{align*}
and for an isotropic field where $ \mathbb E( \xi_{\bm 0} \xi_{\bm \ell} ) $ is a function of $ \| \bm \ell \| $
\begin{align*}
\sigma_{\bm n}^2
& = \mathbb E( \xi_{\bm 0}^2 ) + \sum_{\bm 0 \le^* \bm \ell \le \bm n - \bm 1}
\prod_{j=1,2,3} \frac{ n_j - \ell_j }{ n_j } 2^{\| \bm \ell \|_{0} } \mathbb E( \xi_{\bm 0} \xi_{\bm \ell} ).
\end{align*}
Here  $ \bm a \le^* \bm b $ for  vectors $ \bm a, \bm b $ means $ \bm a \le \bm b $ with $ \bm a \not= \bm b $, $ \| \bm \ell \| $ is the Euclidean vector noerm and $ \| \bm \ell \|_0  $ denotes the number of non-zero coordinates of $ \bm \ell = (\ell_1, \ell_2, \ell_3 ) $. 

In order to use the central limit theorem (\ref{CLTRF}), it is crucial to be in a position to estimate  $ \sigma^2 $ consistently, and the lack of such estimators, contrary to the time series literature, motivated this paper. For example, for the above mentioned image testing problem an asymptotic level $ \alpha $ test, $ \alpha \in (0,1) $, for $ H_0 $ can be devised by rejecting $ H_0 $ if
\[  
	|{\bm n} |^{-1/2} | S_{\bm n} | > \wh{\sigma}_{\bm n} \Phi^{-1}({1-\alpha/2}),
\] 
where $ \Phi$ denotes the distribution function of the standard normal law, as long as $ \wh{\sigma}_{\bm n}^2 $ is a consistent estimator of $ \sigma^2 $. 

In the same vain, one can construct a statistical test for a single image, in order to test for departures from a reference model $ m^{(0)} $ for the expectation of the two-dimensional random field $ \{ \xi_{\bm i} : \bm 0 < \bm i \le \bm n \} $ representing the image of pixel resolution $ n_1 \times n_2 $. In our simulations we do not only investigate the proposed estimators in their own right, but also examine the accuracy of such an image test in terms of the type I error rate, see Section~\ref{SimsH0Test}. It turns out that our approach allows for highly accurate test procedures for images as required by present day image analysis.

The aims and contributions of the present paper are therefore as follows: Going beyond the above scope of sequences of images, we consider general random fields of arbitrary dimension $q$ of the domain and taking values in the $p$-dimensional Euclidean space $ \R^p $. We propose and study a nonparametric estimator of the asymptotic variance, which directly generalizes the class of Bartlett-type long run variance estimators studied in time series analysis. In addition, we provide a central limit theorem for that estimator and establish the consistency of subsampling under weak conditions. A concise model when capturing a sample of images of constant scenery, e.g. in order to estimate the true underlying object by the noisy images and to estimate the spatial-temporal correlations, is to assume that the data is given by a superposition of a time series and a spatial random field. Then one may center the observed images at their temporal average. We show that the corresponding estimator of $ \sigma^2 $ is still consistent under fairly weak conditions. Lastly,
as spatial (or time) correlations in image data resp. sequences of images are often of a local nature, so that autocovariances corresponding to larger lags are negligible or even vanish, we introduce and study a class of cut-off or thresholding estimators, which resemble to some extent thresholding estimators studied in high-dimensional statistics. Those cut-off estimators aim at reducing the estimation variability by neglecting autocovariances of small order. We present extensive simulation results, in order to shed some light onto the accuracy of the proposed estimators, to identify situations where the thresholding estimator improves upon the classical lag-truncation approach, and to investigate how the estimators perform when used in statistical image testing.

Consider a general  $q$-dimensional real-valued random field $\{\xi_{\bm n}: \bm{n}\in\mathbb Z^q\}$ with $\mathbb E(\xi_{\bm n})=0$ for all $\bm{n}\in\mathbb Z^q$, where, as above, the first dimension typically represents time. Then the asymptotic variance of the random field is defined as
\[
  \sigma^2=\sum_{\bm{k}\in\mathbb{Z}^q}\mathbb E(\xi_{\bm{0}}\xi_{\bm{k}}).
\] 
When the random field attains values in $ \R^p $ for some $ p \in \N $, i.e. 
$ \xi_{\bm n} : (\Omega, \mathcal{F}, \mathbb P) \to (\R^p, \mathcal{B}^p) $, for $ {\bm n} \in \Z^q $, 
where $ (\Omega, \mathcal{F}, \mathbb P) $ denotes the underlying probability space and $ \mathcal{B}^p $ the usual Borel $ \sigma $-field on $ \R^p $, the asymptotic variance is the matrix
\[
	\sigma^2 = \lim_{\bm n \to \infty} \frac{1}{| \bm n |} \mathbb E( S_{\bm n} S_{\bm n}' ),
\]
since now $ \sigma_{\bm n}^2 = \Var( | \bm n |^{-1/2} S_{\bm n} ) = | \bm n |^{-1} \mathbb E(S_{\bm n} S_{\bm n}'  )$. 
Observe that $\sigma^2$ adds up all (cross-) autocovariances in the temporal and spatial domain. Obviously, for $ q = 1 $ we are given a univariate times series and $ \sigma^2 $ is the well known long run variance, for which an extensive literature on its estimation exists,  e.g. smoothing window type estimators, cf.\ \cite{LiuWu}, or estimators based on batched means, cf.\ \cite{AlexopoulosGoldsman} and \cite{Wu}.  But for a random field of dimension $ q > 1 $ there are only a few results which address certain special cases.
Indeed, most of the existing results are motivated from an economic point of view and concern the spatial heteroscedasticity and autocorrelation consistent estimation of covariance matrices with applications in two dimensions as in  \cite{DriscollKraay}, \cite{Conley}, \cite{KelejianPrucha} and \cite{KimSun}. \cite{DriscollKraay} obtain consistent estimates of the 
matrix of cross-sectional correlations by averaging over the time dimension, i.e.\ they request that the time dimension grows while the size of the cross-sectional dimension stays fixed. They construct an estimator that relies on the standard Newey and West estimator of the time series literature, see \cite{NeweyWest}, and show that it is robust to very general forms of spatial and temporal dependence as the time dimension becomes large.

\cite{KelejianPrucha} as well as \cite{KimSun} study spatial heteroscedasticity and autocorrelation consistent estimators of covariance matrices of parameter estimators, where the spatial dependence is measured by a so-called economic distance. If this economic distance $d_{ij,n}$ between two units $i$ and $j$ is small these units are highly dependent, if it is large, however, the units are nearly independent. Examples for an economic distance include geographic distances as well as transportation costs. Both papers also allow for errors in the measurement of the distance. The main disadvantage is, however, that both papers focus on linear processes with i.i.d. innovations, which include certain non-stationary models, but exclude non-linear models.

A slightly different approach is the one of \cite{Conley} who considers the estimation of the asymptotic variance for strictly stationary and $\alpha$-mixing random fields in two dimensions. He constructs an estimator as the weighted average of products of the observations and finally shows its consistency. 

However,  to the best of our knowledge, no further estimators of the asymptotic variance for arbitrary stationary mixing random fields in $q$ dimensions exist in the literature so far. Thus, in Section~\ref{EstAsyVar} we propose an estimator defined as a weighted sum of the sample autocovariances of the random field and show its consistency for mixing random fields and also provide results about the asymptotic distribution. In Section~\ref{ImpEst} we show how one can improve the estimator and propose a data-adaptive procedure to select remaining unknowns which make use of subsampling. Section~\ref{Simu} is devoted to an extensive simulation study of both estimators. The simulations study for several models the behavior of the estimators and demonstrate that the thresholding estimator is preferable. We also investigate the statistical properties of the image test and show that the thresholding estimator leads to very accurate image tests. For its application and accuracy in change detection, see \cite{Prause} and \cite{PrauseSteland}. Rigorous proofs of all results are provided in Section~\ref{Sec:Proofs}.

\section{Nonparametric Estimation of the Asymptotic Variance}
\label{EstAsyVar}

In this section, we introduce the proposed nonparametric estimator for the asymptotic variance of a random field. The estimator is based on the formula (\ref{sigmaquadrat}) and belongs to the class of lag-truncation estimators. We provide an interesting extension to the case of multiplicative random fields which are well suited to analyze image samples of a constant scenery.

\subsection{Estimation for general random fields}

Let $\left\{\xi_{\bm{i}}\right\}$ be a random field defined on $\Gamma_{\bm{n}}\coloneqq\{1,\ldots,n_1\}\times\ldots\times\{1,\ldots,n_q\}$ and taking values in $ \R^p $ with $ \max_{1 \le j \le p} \mathbb E ( \xi_{\bm i}^{(j)})^2  < \infty $ for $ {\bm i} \in \Gamma_{\bm n} $, where $ \xi_{\bm i} = ( \xi_{\bm i}^{(1)}, \dots, \xi_{\bm i}^{(p)} )' $. For a fixed $\bm{j}\in\mathbb Z^q$  denote by 
\[
  \gamma(\bm{j})=\mathbb E(\xi_{\bm{0}}\xi_{\bm{j}}'), \qquad {\bm j \in \Z^q}
\]  
the autocovariances of $ \{ \xi_{\bm i} \} $. Define the sample autocovariances
\begin{align}\label{autocovestest}
\widehat{\gamma}_{\bm{n}}(\bm{j})\coloneqq\frac{1}{\left|\widetilde{\Gamma}_{\bm{n}}(\bm{j})\right|}\sum_{{\bm i}\in\widetilde{\Gamma}_{\bm{n}}(\bm{j})}\xi_{\bm{i}}\xi_{\bm{i}+\bm{j}}', \qquad\widetilde{\Gamma}_{\bm{n}}(\bm{j})\coloneqq\left\{\bm{i}\in\Gamma_{\bm{n}}:\bm{i}+\bm{j}\in\Gamma_{\bm{n}}\right\},
\end{align}
and set
\begin{align}\label{sigmaquadrat}
\widehat{\sigma}_{\bm{n}}^2\coloneqq\sum_{|\bm{j}|\leq \bm{m}}w_{\bm{m}}(\bm{j})\widehat{\gamma}_{\bm{n}}(\bm{j}).
\end{align}
Here and  in what follows $|\bm{j}|\leq \bm{m}$ is understood component-by-component, i.e.\ $|j_i|\leq m_i$ for all $1\leq i\leq q$.  The weights $w_{\bm{m}}(\bm{j})$,
 for fixed $\bm{j}\in\mathbb Z^q$ and $\bm{m}=\bm{m}_{\bm{n}}\in\mathbb N^q$, 
 arising in (\ref{sigmaquadrat})  are assumed to satisfy the following assumptions.

(W1) $w_{\bm{m}}(\bm{j})\to 1$ as $\bm{m}\to\infty$ for all $\bm{j}\in\mathbb Z^q$.

(W2) $\left|w_{\bm{m}}(\bm{j})\right|\leq C_w<\infty$ independently of $\bm{j}$ and $\bm{m}$.

\noindent The weights $w_{\bm{m}}(\bm{j})$ can be chosen as the product of one-dimensional weights, i.e.\ we can put
$
w_{\bm{m}}(\bm{j})\coloneqq\prod_{i=1}^qw_{m_i}(j_i),
$
where the $w_{m_i}(j_i)$, $1\leq i\leq q,$ are weights satisfying (W1) and (W2). Examples for such weights are
the Bartlett weights,
$
  w_{m_i}(j_i)=1-\frac{j_i}{m_i}\textnormal{\ for\ } 1\leq i\leq q,
$
or the Tukey-Hanning weight sequence,
$
w_{m_i}(j_i)=\frac{1+\cos\left(\pi j_i/m_i\right)}{2}, 1\leq i\leq q.
$

For multivariate random fields, i.e. for $ p > 1 $, $ \sigma^2 $ and its estimator are matrix-valued, such that the evaluation of an estimator's accuracy requires to select a matrix norm. For simplicity of presentation,  in the sequel the matrix maximum norm on $ \R^{p\times p} $ defined by 
\[
\| \bm A \|_\infty = \max_{1 \le i, j \le p} | a_{ij} |, \quad \text{for a matrix
	$ \bm A = ( a_{ij} )_{1 \le i \le p \atop 1 \le j \le p } $},
\]
will be used. One could also employ the frequently used Frobenius norm $ \| \bm A \|_F = \sqrt{\sum_{i,j=1}^p |a_{ij}|^2} $, which, however, satisfies
\[
\| \bm A \|_\infty = \sqrt{ \max_{i,j} | a_{ij} |^2 } \le \| \bm A \|_F \le p \| \bm A \|_\infty,
\]
such that all results can be easily reformulated in terms of $ \| \cdot \|_F $ instead of $ \| \cdot\|_\infty $, or any other matrix norm, since all norms on $ \R^{p \times p} $ are equivalent.

Our main results require the random field to be $ \varphi $-mixing. For related and other notions of weak dependence for random fields we refer to \cite{Doukhan} and \cite{Bradley} and the discussion at the end of this section. Let us briefly recall  the definition of $ \varphi $-mixing.

For each $j$ with $1\leq j\leq q$ and for each $r\geq 0$, define 
\begin{align}
\label{DefA+}
\mathcal A^+(j;r) & \coloneqq \sigma\left(\left\{\xi_{n_1,\ldots,n_q}: n_j\geq r, n_i\ \textnormal{unrestricted for}\ i\neq j \right\}\right), \\
\label{DefA-}
\mathcal A^-(j;r) & \coloneqq \sigma\left(\left\{\xi_{n_1,\ldots,n_q}: n_j\leq r, n_i\ \textnormal{unrestricted for}\ i\neq j \right\}\right).
\end{align}
Further, for each $r\geq 1$ introduce
\begin{align}\label{phijr}
\varphi(j;r)\coloneqq\sup\left\{\left|\mathbb P(B|A)- \mathbb P(B)\right|: A\in\mathcal A^-(j;0), B\in\mathcal A^+(j;r), P(A)>0\right\}
\end{align}
and define the (half-space) $ \varphi $-mixing coefficients
\begin{align}\label{phimax}
\varphi(r)\coloneqq\max_{1\leq j\leq q}\varphi(j;r).
\end{align}
Putting $\varphi(0)=1$ we can observe that the set $\{\varphi(r)\}$ is a decreasing sequence of real numbers. The  random field $\{\xi_{\bm n}: \bm{n}\in\mathbb Z^q\}$ is now called $\varphi$-mixing, if $\varphi(r)\to 0$ as $r\to\infty$.

\textbf{Assumption 1:}
Let $\left\{\xi_{\bm{i}},\bm{i}\in\Gamma_{\bm{n}}\right\}$ be a strictly stationary,  $\varphi$-mixing random field with $\mathbb E(\xi_{\bm{0}})=\bm 0$, $\mathbb \max_{1 \le j \le p} \mathbb E\left(\xi_{\bm{0}}^{(j)}\right)^4<\infty$ and mixing coefficients satisfying
\begin{align}\label{condphimix}
\sum_{r=1}^{\infty}r^{q-1}\varphi^{\frac{1}{2}}(r)<\infty.
\end{align}

We are now in a position to formulate the following theorem on the consistency of the estimator $\widehat{\sigma}_{\bm{n}}^2$ under mild regularity conditions for a general mutivariate random field.

\begin{theorem}\label{consistencyvariance} Suppose that Assumption 1 holds and that
the weights $w_{\bm{m}}(\bm{j})$ fulfill $(W1)$ and $(W2)$. Furthermore, assume that $ \bm m \to \infty $, as $ \bm n \to \infty $, with 
\begin{align}\label{mnconv}
\frac{\left(m^{\star}\right)^3}{n_{\star}}=o(1),
\end{align}
where $m^{\star}=\max_{1\leq i\leq q}m_i$ and $n_{\star}=\min_{1\leq i\leq q}n_i$.
Then the estimator $\widehat{\sigma}_{\bm{n}}^2$ is weakly consistent, i.e.\
\[
  \| \widehat{\sigma}_{\bm{n}}^2 - \sigma^2 \|_\infty  \stackrel{\mathbb P}\to 0
\]
as $\bm{n}\to\infty$.
\end{theorem}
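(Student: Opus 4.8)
\emph{Proof strategy.} The natural route is a bias--variance decomposition,
\[
\widehat{\sigma}_{\bm n}^2 - \sigma^2 = \bigl(\widehat{\sigma}_{\bm n}^2 - \mathbb E\widehat{\sigma}_{\bm n}^2\bigr) + \bigl(\mathbb E\widehat{\sigma}_{\bm n}^2 - \sigma^2\bigr),
\]
where the two summands are handled separately and, since $p$ is fixed and $\|\cdot\|_\infty$ is the entrywise maximum, it suffices to control each of the $p^2$ scalar entries. A preliminary fact used repeatedly is the absolute summability $\sum_{\bm j\in\Z^q}\|\gamma(\bm j)\|_\infty<\infty$: fixing $\bm j\ne\bm 0$, setting $r=\max_{\ell}|j_\ell|$ and choosing a coordinate $\ell_0$ with $|j_{\ell_0}|=r$, the vectors $\xi_{\bm 0}$ and $\xi_{\bm j}$ are measurable with respect to half-space fields of the form $\mathcal A^-(\ell_0;\cdot)$ and $\mathcal A^+(\ell_0;\cdot)$ separated by distance $r$, so the Ibragimov covariance inequality for $\varphi$-mixing together with $\max_{\ell}\mathbb E(\xi_{\bm 0}^{(\ell)})^2<\infty$ yields $\|\gamma(\bm j)\|_\infty\le 2\varphi^{1/2}(r)\max_{1\le\ell\le p}\mathbb E(\xi_{\bm 0}^{(\ell)})^2$; combined with $\#\{\bm j:\max_\ell|j_\ell|=r\}=O(r^{q-1})$ and (\ref{condphimix}) this gives summability, and in particular $\sigma^2$ is well defined.

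\emph{Bias.} Strict stationarity gives $\mathbb E\widehat{\gamma}_{\bm n}(\bm j)=\gamma(\bm j)$, so
\[
\mathbb E\widehat{\sigma}_{\bm n}^2 - \sigma^2 = \sum_{|\bm j|\le\bm m}\bigl(w_{\bm m}(\bm j)-1\bigr)\gamma(\bm j) - \sum_{|\bm j|\not\le\bm m}\gamma(\bm j).
\]
The second sum is the tail of a convergent series and hence $o(1)$ as $\bm m\to\infty$; in the first sum, $(W2)$ furnishes the summable majorant $(C_w+1)\|\gamma(\bm j)\|_\infty$ while $(W1)$ gives termwise convergence to zero, so dominated convergence forces $\|\mathbb E\widehat{\sigma}_{\bm n}^2-\sigma^2\|_\infty\to0$.

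\emph{Variance.} Fix entries $a,b\in\{1,\dots,p\}$. Since (\ref{mnconv}) implies $m_i/n_i\to0$ for each $i$, one has $|\widetilde\Gamma_{\bm n}(\bm j)|\ge\prod_i(n_i-m_i)\ge|\bm n|/2$ for large $\bm n$ uniformly over $|\bm j|\le\bm m$; together with $(W2)$ this reduces matters to bounding
\[
\frac{1}{|\bm n|^2}\sum_{|\bm j|\le\bm m}\sum_{|\bm j'|\le\bm m}\sum_{\bm i,\bm i'}\bigl|\mathrm{Cov}\bigl(\xi_{\bm i}^{(a)}\xi_{\bm i+\bm j}^{(b)},\ \xi_{\bm i'}^{(a)}\xi_{\bm i'+\bm j'}^{(b)}\bigr)\bigr|.
\]
By Cauchy--Schwarz, the $\varphi$-mixing (Ibragimov) covariance inequality and stationarity, each covariance is bounded by $C\varphi^{1/2}(d)$ --- with the convention $\varphi(0)=1$ and $C$ depending only on $\max_{\ell}\mathbb E(\xi_{\bm 0}^{(\ell)})^4$ --- where $d$ is the largest gap, over the $q$ axis directions, by which the index sets $\{\bm i,\bm i+\bm j\}$ and $\{\bm i',\bm i'+\bm j'\}$ can be separated by a coordinate hyperplane ($d=0$ if no such separation exists). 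Writing $\bm i'=\bm i+\bm u$, the $\bm i$-summation contributes a factor $\le|\bm n|$, and for fixed $\bm j,\bm j'$ a lattice-point count shows $\sum_{\bm u}\varphi^{1/2}(d(\bm u))=O\bigl(\prod_{\ell}(|j_\ell|+|j'_\ell|+1)\bigr)$, using (\ref{condphimix}) (which also yields $\sum_r\varphi^{1/2}(r)<\infty$). The residual sum over $\bm j,\bm j'$ factorizes across coordinates into $\prod_{\ell}\sum_{|j_\ell|,|j'_\ell|\le m_\ell}(|j_\ell|+|j'_\ell|+1)=\prod_\ell O(m_\ell^3)$, so
\[
\mathrm{Var}\bigl((\widehat{\sigma}_{\bm n}^2)^{(a,b)}\bigr)=O\Bigl(\prod_{i=1}^q \frac{m_i^3}{n_i}\Bigr),
\]
which is $o(1)$ because each factor is at most $(m^{\star})^3/n_{\star}=o(1)$ by (\ref{mnconv}). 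Markov's inequality applied to the finitely many entries gives $\|\widehat{\sigma}_{\bm n}^2-\mathbb E\widehat{\sigma}_{\bm n}^2\|_\infty\stackrel{\mathbb P}\to0$, and adding the deterministic bias bound finishes the proof.

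\emph{Main obstacle.} The delicate part is the variance estimate. One must pass from covariances of single field values to covariances of the \emph{products} $\xi^{(a)}\xi^{(b)}$ --- which is precisely why the fourth moments are assumed --- correctly exploit the one-sided, half-space nature of the $\varphi$-mixing coefficients when measuring the separation between two two-point subsets of $\Z^q$, and perform the lattice-point counting sharply enough that a ``shell'' bound $O(r^{q-1})$ rather than a ``ball'' bound is matched against (\ref{condphimix}); this is what makes the hypothesis $(m^{\star})^3=o(n_{\star})$, whose cube reflects the quadratic nature of the statistic $\widehat{\sigma}_{\bm n}^2$, exactly sufficient.
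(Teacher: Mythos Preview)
Your proposal is correct. The bias argument is essentially identical to the paper's (dominated convergence, using $(W1)$, $(W2)$ and the absolute summability of $\gamma$). The variance argument, however, follows a genuinely different route.

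The paper does \emph{not} expand $\Var(\widehat{\sigma}_{\bm n}^2)$ as a quadruple sum. Instead it (i) passes to the product field $Y_{\bm i}(\bm j)=\xi_{\bm i}\xi_{\bm i+\bm j}-\gamma(\bm j)$, (ii) shows in a separate lemma that $\{Y_{\bm i}(\bm j)\}$ is again strictly stationary and $\varphi$-mixing with coefficients $\varphi_{Y(\bm j)}(r)\le\varphi_\xi(r-2j^\star)$, and (iii) applies a black-box second-moment bound of the form $|\bm N|^{-1}\mathbb E(\sum_{\bm i}Y_{\bm i})^2\le A(q,\varphi)\,\mathbb E Y_{\bm 0}^2$ to each $\widehat{\gamma}_{\bm n}(\bm j)$ \emph{individually}. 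Summing the resulting bounds over $|\bm j|\le\bm m$ (after a triangle/Cauchy--Schwarz step) and estimating $A(q,\varphi_{Y(\bm j)})=O((m^\star)^q)$ yields $\bigl((m^\star)^3/(n_\star-m^\star)\bigr)^{q/2}$ for the $L_1$-bound, and the analogous $q$-th power for the variance.

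Your direct covariance-and-lattice-count argument bypasses the auxiliary product-field lemma and is more self-contained; it also gives the slightly sharper product bound $\prod_\ell m_\ell^3/n_\ell$ in place of $((m^\star)^3/n_\star)^q$. The paper's modular approach, on the other hand, pays off later: the same lemmas about $Y_{\bm i}(\bm j)$ are reused to establish the CLT for $\widehat{\gamma}_{\bm n}(\bm j)$ and the subsampling consistency, so the detour is not wasted.
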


It is interesting to note that the consistency can be strengthened to $ L_2 $-consistency resp. $ L_1 $-consistency, as shown by the following theorem.

\begin{theorem}\label{consistencyvariancel2}
Let the conditions of Theorem~\ref{consistencyvariance} be satisfied. Then for $ p = 1 $ 
\[
\mathbb E\left[\left(\widehat{\sigma}_{\bm{n}}^2-\sigma^2\right)^2\right] \to 0
\]
as $\bm{n}\to\infty$, and for $ p > 1 $ 
\[
\mathbb E\left\| \widehat{\sigma}_{\bm{n}}^2-\sigma^2\right\|_\infty \to 0,
\]
as $\bm{n}\to\infty$.
\end{theorem}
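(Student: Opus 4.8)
The plan is to upgrade the convergence in probability of Theorem~\ref{consistencyvariance} to the asserted $ L_2 $- resp.\ $ L_1 $-convergence by controlling the mean squared error directly, through the bias--variance decomposition. For $ p = 1 $ this is precisely the assertion, so it suffices to show that the bias $ \mathbb E(\widehat{\sigma}_{\bm n}^2) - \sigma^2 $ and the variance $ \Var(\widehat{\sigma}_{\bm n}^2) $ both vanish, for then $ \mathbb E\bigl[(\widehat{\sigma}_{\bm n}^2 - \sigma^2)^2\bigr] = \Var(\widehat{\sigma}_{\bm n}^2) + \bigl(\mathbb E(\widehat{\sigma}_{\bm n}^2) - \sigma^2\bigr)^2 \to 0 $. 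For $ p > 1 $ I would argue entrywise: each entry $ (\widehat{\sigma}_{\bm n}^2)_{kl} $ is a weighted sum, with the same weights $ w_{\bm m}(\bm j) $, of the sample cross-autocovariances $ \widehat{\gamma}_{\bm n}(\bm j)_{kl} = |\widetilde{\Gamma}_{\bm n}(\bm j)|^{-1} \sum_{\bm i \in \widetilde{\Gamma}_{\bm n}(\bm j)} \xi_{\bm i}^{(k)} \xi_{\bm i + \bm j}^{(l)} $, which has exactly the structure of the scalar estimator, the only new point being that the relevant product moments $ \mathbb E[(\xi_{\bm 0}^{(k)})^2 (\xi_{\bm j}^{(l)})^2] $ are finite by Cauchy--Schwarz and $ \max_{1 \le j \le p} \mathbb E(\xi_{\bm 0}^{(j)})^4 < \infty $. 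Hence the scalar bias and variance bounds below apply to every entry, yielding $ \mathbb E[((\widehat{\sigma}_{\bm n}^2)_{kl} - (\sigma^2)_{kl})^2] \to 0 $ for each fixed $ (k,l) $, and, since there are only $ p^2 $ of them, Jensen's inequality gives
\[
  \mathbb E\bigl\| \widehat{\sigma}_{\bm n}^2 - \sigma^2 \bigr\|_\infty \le \sum_{k,l=1}^{p} \Bigl( \mathbb E\bigl[ ((\widehat{\sigma}_{\bm n}^2)_{kl} - (\sigma^2)_{kl})^2 \bigr] \Bigr)^{1/2} \longrightarrow 0 .
\]
(The same estimate would in fact give $ L_2 $-consistency for $ p > 1 $ as well, via $ \| \bm A \|_\infty^2 \le \sum_{k,l} a_{kl}^2 $; only the $ L_1 $ version is recorded.)

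For the bias, strict stationarity gives $ \mathbb E\widehat{\gamma}_{\bm n}(\bm j) = \gamma(\bm j) $, hence $ \mathbb E(\widehat{\sigma}_{\bm n}^2) = \sum_{|\bm j| \le \bm m} w_{\bm m}(\bm j) \gamma(\bm j) $ and
\[
  \mathbb E(\widehat{\sigma}_{\bm n}^2) - \sigma^2 = \sum_{|\bm j| \le \bm m} \bigl( w_{\bm m}(\bm j) - 1 \bigr) \gamma(\bm j) - \sum_{|\bm j| \nleq \bm m} \gamma(\bm j) ,
\]
where $ |\bm j| \nleq \bm m $ abbreviates $ |j_i| > m_i $ for some $ i $. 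A $ \varphi $-mixing covariance inequality together with the fourth-moment assumption gives $ \| \gamma(\bm j) \|_\infty \le C \varphi^{1/2}(\| \bm j \|_\infty) $, and since there are $ O(r^{q-1}) $ lags with $ \| \bm j \|_\infty = r $, condition (\ref{condphimix}) implies $ \sum_{\bm j \in \mathbb Z^q} \| \gamma(\bm j) \|_\infty < \infty $. Therefore the tail sum on the right tends to $ 0 $ as $ \bm m \to \infty $, and the first sum tends to $ 0 $ by (W1), (W2) and dominated convergence with dominating sequence $ (C_w+1) \| \gamma(\bm j) \|_\infty $.

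The variance is the main obstacle. Writing $ \Var(\widehat{\sigma}_{\bm n}^2) = \sum_{|\bm j| \le \bm m} \sum_{|\bm j'| \le \bm m} w_{\bm m}(\bm j) w_{\bm m}(\bm j') \Cov\bigl( \widehat{\gamma}_{\bm n}(\bm j), \widehat{\gamma}_{\bm n}(\bm j') \bigr) $ and expanding each covariance as a double average over sites $ \bm i, \bm i' $, everything reduces to estimating $ \Cov( \xi_{\bm i} \xi_{\bm i + \bm j}, \xi_{\bm i'} \xi_{\bm i' + \bm j'} ) $; the products lie in $ L_2 $ by Cauchy--Schwarz, so the $ \varphi $-mixing covariance inequality bounds this by $ C \varphi^{1/2}\bigl( (\| \bm i - \bm i' \|_\infty - 2 m^{\star})_+ \bigr) $, the term $ 2 m^{\star} $ absorbing the lag displacements. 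Summing over $ \bm i' $ and splitting the range at $ \| \bm i - \bm i' \|_\infty \approx 4 m^{\star} $ contributes $ O((m^{\star})^q) $ from the ``near'' sites and, by (\ref{condphimix}), $ O(1) $ from the ``far'' ones, so $ \sum_{\bm i'} | \Cov(\cdot) | = O((m^{\star})^q) $ uniformly; dividing by $ |\widetilde{\Gamma}_{\bm n}(\bm j)| \, |\widetilde{\Gamma}_{\bm n}(\bm j')| $ and using $ |\widetilde{\Gamma}_{\bm n}(\bm j)| \ge \tfrac12 |\bm n| $ eventually (valid because (\ref{mnconv}) forces $ m^{\star}/n_{\star} \to 0 $) yields $ |\Cov( \widehat{\gamma}_{\bm n}(\bm j), \widehat{\gamma}_{\bm n}(\bm j') )| = O( (m^{\star})^q / |\bm n| ) $. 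Since there are at most $ (2m^{\star}+1)^{2q} $ lag pairs and $ |\bm n| \ge n_{\star}^q $,
\[
  \Var(\widehat{\sigma}_{\bm n}^2) = O\!\left( \frac{(m^{\star})^{3q}}{|\bm n|} \right) = O\!\left( \Bigl( \frac{(m^{\star})^3}{n_{\star}} \Bigr)^{q} \right) = o(1)
\]
by (\ref{mnconv}) --- which is exactly why the cubic rate is imposed.

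These bias and variance estimates are presumably essentially those already underlying the proof of Theorem~\ref{consistencyvariance} (which must proceed via Chebyshev's inequality, given the form of (\ref{mnconv})), so for $ p = 1 $ the $ L_2 $-statement is largely contained there, and the genuinely new content is the elementary entrywise reduction for $ p > 1 $. The single delicate point throughout is the variance bound --- keeping track of how $ \Cov( \xi_{\bm i} \xi_{\bm i + \bm j}, \xi_{\bm i'} \xi_{\bm i' + \bm j'} ) $ decays over the $ q $-dimensional index set once lag shifts of size up to $ m^{\star} $ are accounted for, and arranging the split into an $ O((m^{\star})^q) $ part and a summable tail so that the rate (\ref{mnconv}) is precisely what is needed.
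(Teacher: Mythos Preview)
Your argument is correct and shares the paper's overall architecture: bias--variance decomposition for $p=1$, entrywise reduction for $p>1$, with the bias handled exactly as you describe. The difference lies in the variance bound. Rather than expanding $\Var(\widehat{\sigma}_{\bm n}^2)$ into the double sum over $(\bm j,\bm j')$ and estimating each $\Cov(\xi_{\bm i}\xi_{\bm i+\bm j},\xi_{\bm i'}\xi_{\bm i'+\bm j'})$ directly via the mixing inequality, the paper first applies Cauchy--Schwarz to the sum over $\bm j$ to decouple the cross-terms,
\[
\Var(\widehat{\sigma}_{\bm n}^2) \le c_1 (m^\star)^q \sum_{|\bm j|\le\bm m} \mathbb E\bigl[(\widehat{\gamma}_{\bm n}(\bm j)-\gamma(\bm j))^2\bigr],
\]
and then bounds each $\mathbb E[(\widehat{\gamma}_{\bm n}(\bm j)-\gamma(\bm j))^2]$ by invoking the partial-sum variance inequality of Lemma~\ref{rphi1234}(c) for the $\varphi$-mixing field $Y_{\bm i}(\bm j)=\xi_{\bm i}\xi_{\bm i+\bm j}-\gamma(\bm j)$, a bound already derived in the proof of Theorem~\ref{consistencyvariance}. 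This yields the same rate $O\bigl(((m^\star)^3/n_\star)^q\bigr)$ you obtain, but in one line by reuse. Your direct approach is more self-contained and makes the mechanism (the $2m^\star$ lag shift in the mixing bound, the near/far split) fully explicit; the paper's is more economical because the individual variance bounds are already in hand. For $p>1$, the paper in fact establishes $\mathbb E\|\widehat{\sigma}_{\bm n}^2-\widetilde{\sigma}_{\bm n}^2\|_\infty=o(1)$ already inside the proof of Theorem~\ref{consistencyvariance} (via Markov's inequality on the $L_1$ norm, not Chebyshev), so the $p>1$ assertion of Theorem~\ref{consistencyvariancel2} is essentially a cross-reference---your guess that the convergence-in-probability proof already contains the needed moment bounds is exactly right.
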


A direct consequence of Theorem~\ref{consistencyvariance} is the consistency of the autocovariance estimators, $\widehat{\gamma}_{\bm{n}}(\bm{j})$, used to define the estimator of $ \sigma^2 $, which are, of course, of independent interest when analyzing random field data.

\begin{corollary}\label{autocovest}
The estimator $\widehat{\gamma}_{\bm{n}}(\bm{j})$ of the lag $ {\bm j} $-autocovariance of random field $ \{ \xi_{\bm i} \} $, 
\[
\widehat{\gamma}_{\bm{n}}(\bm{j})\coloneqq\frac{1}{\left|\widetilde{\Gamma}_{\bm{n}}(\bm{j})\right|}\sum_{i\in\widetilde{\Gamma}_{\bm{n}}(\bm{j})}\xi_{\bm{i}}\xi_{\bm{i}+\bm{j}}'
\]
for $\bm{j}\in\mathbb Z^q$, is $L_r$- and thus also weakly consistent for the lag $ {\bm j} $ autocovariance $\gamma(\bm{j})$, where $ r = 2 $ for $ p = 1 $ and $ r = 1 $ otherwise.
\end{corollary}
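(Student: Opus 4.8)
The plan is to deduce Corollary~\ref{autocovest} directly from the structure of the estimator $\widehat{\sigma}_{\bm n}^2$ in (\ref{sigmaquadrat}) together with the $L_r$-convergence established in Theorem~\ref{consistencyvariancel2}. The key observation is that $\widehat{\sigma}_{\bm n}^2$ is a weighted sum of exactly the sample autocovariances $\widehat{\gamma}_{\bm n}(\bm j)$ that we wish to control, so the hard analytic work — bounding moments of the sample autocovariances under $\varphi$-mixing, handling the edge effects coming from $\widetilde{\Gamma}_{\bm n}(\bm j)$ versus $\Gamma_{\bm n}$, and controlling the growing number of lags — has already been done in the proof of Theorems~\ref{consistencyvariance} and~\ref{consistencyvariancel2}. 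What remains is to isolate a single lag.

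First I would fix an arbitrary $\bm j \in \Z^q$ and note that, since $\bm m = \bm m_{\bm n} \to \infty$ as $\bm n \to \infty$, we have $|\bm j| \le \bm m$ for all sufficiently large $\bm n$, so $\widehat{\gamma}_{\bm n}(\bm j)$ genuinely appears in the sum (\ref{sigmaquadrat}) with weight $w_{\bm m}(\bm j) \to 1$ by (W1). The cleanest route is then to revisit the proof of Theorem~\ref{consistencyvariancel2}: that proof must show $\mathbb E\|\widehat{\gamma}_{\bm n}(\bm j) - \gamma(\bm j)\|$ is summably small uniformly over $|\bm j| \le \bm m$ (in the appropriate $L_r$ sense, $r=2$ for $p=1$ and $r=1$ otherwise), since otherwise the weighted sum could not converge. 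Extracting from that argument the single-lag bound
\[
  \mathbb E\bigl\| \widehat{\gamma}_{\bm n}(\bm j) - \gamma(\bm j) \bigr\|_\infty^r \to 0 \qquad (\bm n \to \infty)
\]
for each fixed $\bm j$ is then immediate, because it is one term of a sum of nonnegative quantities whose total tends to zero; for $p=1$ one reads off the squared bound, for $p>1$ the $L_1$ bound. Weak consistency, $\widehat{\gamma}_{\bm n}(\bm j) \stackrel{\mathbb P}\to \gamma(\bm j)$, follows from $L_r$-consistency by Markov's inequality.

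Alternatively, and perhaps more self-containedly, one can run the decomposition $\widehat{\gamma}_{\bm n}(\bm j) - \gamma(\bm j) = \bigl(\widehat{\gamma}_{\bm n}(\bm j) - \mathbb E\widehat{\gamma}_{\bm n}(\bm j)\bigr) + \bigl(\mathbb E\widehat{\gamma}_{\bm n}(\bm j) - \gamma(\bm j)\bigr)$. The bias term vanishes because strict stationarity gives $\mathbb E\widehat{\gamma}_{\bm n}(\bm j) = \gamma(\bm j)$ exactly, for every $\bm n$ with $\widetilde{\Gamma}_{\bm n}(\bm j)\neq\emptyset$. For the stochastic term one bounds its second (resp.\ first) moment using the $\varphi$-mixing condition (\ref{condphimix}) and the fourth-moment assumption from Assumption~1, exactly as in the variance computation inside the proof of Theorem~\ref{consistencyvariance}: the products $\xi_{\bm i}\xi_{\bm i+\bm j}'$ form a strictly stationary $\varphi$-mixing array (with mixing coefficients dominated by $\varphi(\cdot)$ up to a fixed lag shift depending on $\bm j$), so a covariance-summation argument over $\widetilde{\Gamma}_{\bm n}(\bm j)$ gives a bound of order $|\widetilde{\Gamma}_{\bm n}(\bm j)|^{-1} \to 0$.

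The main obstacle, such as it is, is essentially bookkeeping rather than a genuine difficulty: one must be careful that the $L_r$-statement in Theorem~\ref{consistencyvariancel2} really is proved via a bound that is uniform in (or at least summable over) the lags $\bm j$ with $|\bm j|\le\bm m$, so that pulling out a single term is legitimate — and one must track the distinction between $\|\cdot\|_\infty$ on the $p\times p$ matrix and the coordinatewise fourth-moment hypothesis, invoking the norm equivalence displayed earlier in Section~\ref{EstAsyVar} to move freely between the two. Since Assumption~1 already supplies the fourth moments needed for the $p>1$ case (where only $L_1$-consistency is claimed) and for the $p=1$ case (where $L_2$ is claimed), no new hypotheses are required, and the corollary follows with essentially no additional computation beyond what Theorem~\ref{consistencyvariancel2} contains.
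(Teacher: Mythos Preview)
Your proposal is correct and matches the paper's approach. The paper's proof is a single line --- ``This is a direct consequence of the proof of Theorem~\ref{consistencyvariance}'' --- and what it has in mind is precisely your ``alternative'' route: the inequality labeled (\ref{sumest}) in that proof already bounds $\mathbb E|\widehat{\gamma}_{\bm n}^{(\nu,\mu)}(\bm j) - \gamma^{(\nu,\mu)}(\bm j)|^2$ by a constant times $|\widetilde{\Gamma}_{\bm n}(\bm j)|^{-1}$ for each fixed $\bm j$ and each entry $(\nu,\mu)$, which gives $L_2$-consistency entrywise (hence $L_2$ for $p=1$ and $L_1$ of $\|\cdot\|_\infty$ for $p>1$). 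Your first route via Theorem~\ref{consistencyvariancel2} also works but is a slight detour, since the relevant single-lag bound is already isolated in the proof of Theorem~\ref{consistencyvariance}.
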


\subsection{Asymptotic distribution theory}

Let us now turn to the asymptotic distribution theory, which we study for univariate random fields.
We show that, under weak regularity conditions, the sample autocovariances are asymptotically Gaussian and provide an associated limit theorem for the estimator $ \widehat{\sigma}_{\bm n}^2 $. The latter is interesting in its own right, but is also needed to establish the subsampling central limit theorem to be discussed and applied in the next section.

We need the following approximation result, which shows that the sample autocovariances can be scaled with $ \sqrt{ | \wt{\Gamma}_{\bm n}( \bm j) | } $ or $ \sqrt{ | \bm n | } $.

\begin{lemma} 
\label{CLTLemma}
	Under Assumption 1 it holds
	\[ 
	\max_{|\bm j| \le \bm m} \left\|  \sqrt{ | \widetilde{\Gamma}_{\bm n}( \bm j ) | } ( \wh{\gamma}_{\bm n}( \bm j ) - \gamma( \bm j) ) - \frac{1}{\sqrt{|\bm n|}} \sum_{ {\bm i} \in 1:{\bm n}} ( \xi_{\bm i} \xi_{\bm i+\bm j} - \gamma(\bm j)) \right\|_\infty = o_{\mathbb P}(1),
   \]
   as $ \bm n \to \infty $.
\end{lemma}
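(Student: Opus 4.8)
The plan is to show that the difference between $\sqrt{|\widetilde{\Gamma}_{\bm n}(\bm j)|}(\wh\gamma_{\bm n}(\bm j)-\gamma(\bm j))$ and $|\bm n|^{-1/2}\sum_{\bm i\in 1:\bm n}(\xi_{\bm i}\xi_{\bm i+\bm j}-\gamma(\bm j))$ splits into two controllable pieces: a rescaling error coming from the mismatch between $|\widetilde{\Gamma}_{\bm n}(\bm j)|$ and $|\bm n|$, and an edge-effect error coming from the indices $\bm i\in 1:\bm n$ with $\bm i+\bm j\notin\Gamma_{\bm n}$, i.e.\ the set $(1:\bm n)\setminus\widetilde{\Gamma}_{\bm n}(\bm j)$. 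Writing $T_{\bm n}(\bm j):=\sum_{\bm i\in\widetilde{\Gamma}_{\bm n}(\bm j)}(\xi_{\bm i}\xi_{\bm i+\bm j}-\gamma(\bm j))$, the first term equals $|\widetilde{\Gamma}_{\bm n}(\bm j)|^{-1/2}T_{\bm n}(\bm j)$ and the second equals $|\bm n|^{-1/2}T_{\bm n}(\bm j)$ plus $|\bm n|^{-1/2}\sum_{\bm i\in(1:\bm n)\setminus\widetilde{\Gamma}_{\bm n}(\bm j)}(\xi_{\bm i}\xi_{\bm i+\bm j}-\gamma(\bm j))$. Hence the quantity inside the norm is
\[
\Bigl(|\widetilde{\Gamma}_{\bm n}(\bm j)|^{-1/2}-|\bm n|^{-1/2}\Bigr)T_{\bm n}(\bm j)\;-\;\frac{1}{\sqrt{|\bm n|}}\sum_{\bm i\in(1:\bm n)\setminus\widetilde{\Gamma}_{\bm n}(\bm j)}(\xi_{\bm i}\xi_{\bm i+\bm j}-\gamma(\bm j)).
\]

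First I would bound the second (edge) term. Since $|\bm j|\le\bm m$, the cardinality of $(1:\bm n)\setminus\widetilde{\Gamma}_{\bm n}(\bm j)$ is at most $|\bm n|\sum_{i=1}^q (m_i/n_i)\le |\bm n|\cdot q\,m^\star/n_\star$. Using strict stationarity, a fourth-moment bound on $\xi_{\bm 0}$ (from Assumption~1), and a covariance inequality for $\varphi$-mixing fields applied to the summands $\xi_{\bm i}\xi_{\bm i+\bm j}$ (whose centered versions have summable autocovariances under \eqref{condphimix}, the same estimate already used in the proof of Theorem~\ref{consistencyvariance}), the variance of $\sum_{\bm i\in(1:\bm n)\setminus\widetilde{\Gamma}_{\bm n}(\bm j)}(\xi_{\bm i}\xi_{\bm i+\bm j}-\gamma(\bm j))$ is $O\bigl(|\bm n|\,m^\star/n_\star\bigr)$ uniformly in $\bm j$ with $|\bm j|\le\bm m$. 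Dividing by $|\bm n|$ and taking a square root, the $L_2$-norm of the edge term is $O\bigl((m^\star/n_\star)^{1/2}\bigr)=o(1)$ by \eqref{mnconv}, which even dominates what is needed. To pass from a single $\bm j$ to $\max_{|\bm j|\le\bm m}$, I would use that there are only $O((m^\star)^q)$ lags and apply a union bound together with a maximal-type moment inequality (e.g.\ a crude bound $\mathbb E\max_{\bm j}|\cdot|^2\le\sum_{\bm j}\mathbb E|\cdot|^2$, or, if a better rate is needed, a fourth-moment version controlling $\mathbb E\max_{\bm j}|\cdot|^4$); here again \eqref{mnconv} with the cube of $m^\star$ gives the room.

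Next I would handle the rescaling term. From Theorem~\ref{consistencyvariance} (or directly from the variance bound above with $\bm j=\bm 0$-type reasoning) one has $|\bm n|^{-1}T_{\bm n}(\bm j)=\wh\gamma_{\bm n}(\bm j)|\widetilde{\Gamma}_{\bm n}(\bm j)|/|\bm n| - \gamma(\bm j)|\widetilde{\Gamma}_{\bm n}(\bm j)|/|\bm n|\to 0$ in probability, uniformly in $\bm j$ with $|\bm j|\le\bm m$; more precisely $|\bm n|^{-1/2}T_{\bm n}(\bm j)=O_{\mathbb P}(1)$ uniformly. The prefactor satisfies
\[
\Bigl||\widetilde{\Gamma}_{\bm n}(\bm j)|^{-1/2}-|\bm n|^{-1/2}\Bigr|=\frac{1}{\sqrt{|\bm n|}}\cdot\frac{1-\sqrt{|\widetilde{\Gamma}_{\bm n}(\bm j)|/|\bm n|}}{\sqrt{|\widetilde{\Gamma}_{\bm n}(\bm j)|/|\bm n|}},
\]
and since $1-|\widetilde{\Gamma}_{\bm n}(\bm j)|/|\bm n|\le q\,m^\star/n_\star\to 0$, the ratio $|\widetilde{\Gamma}_{\bm n}(\bm j)|/|\bm n|$ is bounded away from $0$ for large $\bm n$, so the prefactor is $O\bigl(|\bm n|^{-1/2}m^\star/n_\star\bigr)$ uniformly. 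Multiplying by $|\bm n|^{-1/2}T_{\bm n}(\bm j)\cdot\sqrt{|\bm n|}=O_{\mathbb P}(\sqrt{|\bm n|})$—wait, more carefully: $\bigl(|\widetilde{\Gamma}_{\bm n}(\bm j)|^{-1/2}-|\bm n|^{-1/2}\bigr)T_{\bm n}(\bm j)=O\bigl(m^\star/n_\star\bigr)\cdot\bigl(|\bm n|^{-1/2}T_{\bm n}(\bm j)\bigr)=o_{\mathbb P}(1)$ uniformly in $\bm j$, using the uniform stochastic boundedness of $|\bm n|^{-1/2}T_{\bm n}(\bm j)$ established above. Combining the two pieces via the triangle inequality yields the claim.

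The main obstacle I anticipate is making the estimates \emph{uniform in $\bm j$} over the growing family $\{|\bm j|\le\bm m\}$: the pointwise bounds are straightforward $\varphi$-mixing moment computations essentially already present in the proof of Theorem~\ref{consistencyvariance}, but converting them into a statement about the maximum requires either a careful fourth-moment maximal inequality (so that the union bound over $O((m^\star)^q)$ lags is affordable) or an argument exploiting that the edge-sets $(1:\bm n)\setminus\widetilde{\Gamma}_{\bm n}(\bm j)$ are nested and small. The condition $(m^\star)^3/n_\star=o(1)$ is exactly what provides the slack for this; I would keep track of powers of $m^\star$ throughout to confirm no step needs more than is available.
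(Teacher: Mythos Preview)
Your decomposition into a rescaling term and an edge term is exactly the one the paper uses (there written as $R_{\bm n}^{(2)}$ and $R_{\bm n}^{(1)}$), and your moment bounds for each piece are the right ones: the edge sum has variance of order $|I_{\bm n}^{(1,\ell)}|\cdot A(q,\varphi_{Y(\bm j)})\,\mathbb E(Y_{\bm 0}(\bm j))^2$ via Lemma~\ref{rphi1234}(c), and the rescaling factor is $o(1)$ times an $O_{\mathbb P}(1)$ quantity.

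The one place you make life harder than necessary is the uniformity in $\bm j$. You treat $\bm m$ as growing with $\bm n$ and worry about a union bound over $O((m^\star)^q)$ lags, which leads you to invoke condition~(\ref{mnconv}). In the paper, however, Lemma~\ref{CLTLemma} is stated and proved for \emph{fixed} $\bm m$ (it is used only in Theorems~\ref{CLT}--\ref{CLTmultiv2}, where $\bm m\ge\bm 1$ is fixed). With $\bm m$ fixed, the maximum is over finitely many lags, so once each of the two remainders is $o_{\mathbb P}(1)$ for every individual $\bm j$, the maximum is automatically $o_{\mathbb P}(1)$; no maximal inequality and no rate condition on $\bm m$ are needed. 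Your anticipated ``main obstacle'' therefore disappears, and the proof is shorter than you outline.
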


We have the following result providing the weak convergence of the sample autocovariances and the estimator $ \widehat{\sigma}_{\bm n}^2 $ when appropriately centered and scaled. Let us denote by $ \Rightarrow $ the weak convergence in the Euclidean space $ \R^l $, $ l \in \N $. Recall that a matrix $ \bm A = ( a_{\nu \mu} )_{\nu\mu} $ of dimension $ p \times p $ defines the quadratic form $ (\lambda_\nu )_{\nu=1}^p \mapsto \sum_{\nu, \mu=1}^p \lambda_\nu \lambda_\mu a_{\nu \mu} $,  $ (\lambda_\nu )_{\nu=1}^p \in \R^p $, which is positive definite if it is positive for all $ (\lambda_\nu )_{\nu=1}^p  \not= \bm 0 $. More generally, a scheme $ ( a_{\bm j}^{(\nu,\mu)} : | \bm j |\le \bm m, |\bm k|  \le \bm m, 1 \le \nu, \mu \le p ) $ of real numbers defines a quadratic form via  $ ( \lambda_{\bm j}^{(\nu)} : | \bm j | \le \bm m, 1 \le \nu \le p ) \mapsto \sum_{| \bm j | \le \bm m} \sum_{| \bm k | \le \bm m} \sum_{\nu, \mu=1}^p \lambda_{\bm j}^{(\nu)}  \lambda_{\bm k}^{(\mu)} a_{\bm j}^{(\nu,\mu)} $, which is positive definite if it is positive for all $  ( \lambda_{\bm j}^{(\nu)} : | \bm j | \le \bm m, 1 \le \nu \le p )  \not= \bm 0 $.

\begin{theorem}
	\label{CLT}
	Suppose Assumption 1 holds true  with $ p = 1 $ and fix $ \bm m \ge 1 $.
\begin{itemize}
	\item[(i)]  $ \sqrt{ | \widetilde{\Gamma}_{\bm n}( \bm j ) | } ( \wh{\gamma}_{\bm n}( \bm j ) - \gamma( \bm j) ) \Rightarrow B_{\bm j} $, 
  as $ \bm n \to \infty $, where $ ( B_{\bm j} : |\bm j| \le \bm m ) $ is Gaussian with mean zero and covariances given by 
  \begin{equation}
  \label{CovFunctionB_univ}
    \Cov( B_{\bm j}, B_{\bm j'} ) = \sum_{\bm i \in \Z^q} \mathbb E( \xi_{\bm 0} \xi_{\bm j} - \gamma(\bm j) ) (  \xi_{\bm i} \xi_{\bm i + \bm j'} - \gamma(\bm j')  ),
  \end{equation}
  for $ |\bm j| \le \bm m $ and $  |\bm j'| \le \bm m$, provided (\ref{CovFunctionB_univ}) induces a positive definite quadratic form.
  \item[(ii)] For $ \bm m \ge \bm 1 $ we have
  \[
    \sqrt{ | \bm n |} ( \wh{\sigma}_{\bm n}^2 - \mathbb E(  \wh{\sigma}_{\bm n}^2) ) \Rightarrow 
    \mathcal{S},
  \]
  as $ \bm n \to \infty $, where $ \mathcal{S} \stackrel{d}{=} \sum_{|\bm j| \le \bm m} w_{\bm m}(\bm j) B_{\bm j} $.
\end{itemize}
\end{theorem}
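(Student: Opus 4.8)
The plan is to deduce part (i) from Lemma~\ref{CLTLemma} together with a central limit theorem for strictly stationary $\varphi$-mixing random fields, and then to obtain part (ii) from part (i) by Slutsky's theorem and the continuous mapping theorem.

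Since $\bm m$ is fixed, the maximum over $|\bm j|\le\bm m$ in Lemma~\ref{CLTLemma} runs over a finite index set, so that lemma reduces part (i) to the joint weak convergence $V_{\bm n}\coloneqq\bigl(|\bm n|^{-1/2}\sum_{\bm i\in 1:\bm n}(\xi_{\bm i}\xi_{\bm i+\bm j}-\gamma(\bm j))\bigr)_{|\bm j|\le\bm m}\Rightarrow(B_{\bm j})_{|\bm j|\le\bm m}$. Put $\eta_{\bm i}\coloneqq(\xi_{\bm i}\xi_{\bm i+\bm j}-\gamma(\bm j))_{|\bm j|\le\bm m}$, so $V_{\bm n}=|\bm n|^{-1/2}\sum_{\bm i\in 1:\bm n}\eta_{\bm i}$. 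The field $\{\eta_{\bm i}\}$ is strictly stationary with $\mathbb E(\eta_{\bm i})=\bm 0$ (strict stationarity of $\{\xi_{\bm i}\}$ makes every coordinate have mean $\gamma(\bm j)-\gamma(\bm j)=0$); each coordinate lies in $L_2$ since $\|\xi_{\bm 0}\xi_{\bm j}\|_2\le\|\xi_{\bm 0}\|_4\|\xi_{\bm j}\|_4<\infty$ by Assumption~1; and $\eta_{\bm i}$ is $\sigma(\xi_{\bm k}:\|\bm k-\bm i\|_\infty\le m^{\star})$-measurable. This block measurability together with strict stationarity of $\{\xi_{\bm i}\}$ shows, within the half-space framework (\ref{DefA+})--(\ref{phimax}), that $\{\eta_{\bm i}\}$ is again $\varphi$-mixing with $\varphi_\eta(r)\le\varphi((r-2m^{\star})_+)$, so the summability condition (\ref{condphimix}) is inherited, the fixed shift $2m^{\star}$ being harmless.

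Next I would identify the limiting covariances. By stationarity, $\Cov(V_{\bm n}^{(\bm j)},V_{\bm n}^{(\bm j')})=\sum_{\bm k\in\Z^q}\bigl(\prod_{i=1}^q(n_i-|k_i|)_+/n_i\bigr)\mathbb E(\eta_{\bm 0}^{(\bm j)}\eta_{\bm k}^{(\bm j')})$. The $\varphi$-mixing covariance inequality $|\mathbb E(XY)|\le 2\varphi^{1/2}\|X\|_2\|Y\|_2$, applied via the block measurability of $\eta$, gives $|\mathbb E(\eta_{\bm 0}^{(\bm j)}\eta_{\bm k}^{(\bm j')})|\le C\,\varphi^{1/2}((\|\bm k\|_\infty-2m^{\star})_+)$, and since $\#\{\bm k:\|\bm k\|_\infty=s\}=O(s^{q-1})$ this bound is summable over $\bm k\in\Z^q$ by (\ref{condphimix}); dominated convergence then yields $\Cov(V_{\bm n}^{(\bm j)},V_{\bm n}^{(\bm j')})\to\sum_{\bm k\in\Z^q}\mathbb E(\eta_{\bm 0}^{(\bm j)}\eta_{\bm k}^{(\bm j')})$, which is exactly the right-hand side of (\ref{CovFunctionB_univ}). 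To upgrade convergence of covariances to weak convergence I would use the Cram\'er--Wold device: for any $\bm c=(c_{\bm j})_{|\bm j|\le\bm m}\ne\bm 0$ the scalar field $\zeta_{\bm i}\coloneqq\sum_{|\bm j|\le\bm m}c_{\bm j}\eta_{\bm i}^{(\bm j)}$ is strictly stationary, zero-mean, $L_2$, and $\varphi$-mixing with (\ref{condphimix}), and the positive-definiteness hypothesis on (\ref{CovFunctionB_univ}) guarantees that $\Var(|\bm n|^{-1/2}\sum_{\bm i\in 1:\bm n}\zeta_{\bm i})$ converges to a strictly positive limit; a central limit theorem for strictly stationary $\varphi$-mixing random fields (the one underlying (\ref{CLTRF}), cf.\ \cite{Deo}; alternatively a direct Bernstein blocking argument) then gives $|\bm n|^{-1/2}\sum_{\bm i\in 1:\bm n}\zeta_{\bm i}\Rightarrow\mathcal{N}\bigl(0,\sum_{|\bm j|,|\bm j'|\le\bm m}c_{\bm j}c_{\bm j'}\Cov(B_{\bm j},B_{\bm j'})\bigr)$, which is (i). Part (ii) then follows quickly: since $\bm m$ is fixed and $\mathbb E\widehat\gamma_{\bm n}(\bm j)=\gamma(\bm j)$ by strict stationarity, $\mathbb E\widehat\sigma_{\bm n}^2=\sum_{|\bm j|\le\bm m}w_{\bm m}(\bm j)\gamma(\bm j)$, so $\sqrt{|\bm n|}(\widehat\sigma_{\bm n}^2-\mathbb E\widehat\sigma_{\bm n}^2)=\sum_{|\bm j|\le\bm m}w_{\bm m}(\bm j)\sqrt{|\bm n|}(\widehat\gamma_{\bm n}(\bm j)-\gamma(\bm j))$; as $|\widetilde{\Gamma}_{\bm n}(\bm j)|/|\bm n|=\prod_{i=1}^q(1-|j_i|/n_i)\to 1$, Slutsky's theorem applied to the fixed finite vector and part (i) yield $\bigl(\sqrt{|\bm n|}(\widehat\gamma_{\bm n}(\bm j)-\gamma(\bm j))\bigr)_{|\bm j|\le\bm m}\Rightarrow(B_{\bm j})_{|\bm j|\le\bm m}$, and the continuous linear map $(b_{\bm j})\mapsto\sum_{|\bm j|\le\bm m}w_{\bm m}(\bm j)b_{\bm j}$ gives the claim with $\mathcal S\stackrel{d}{=}\sum_{|\bm j|\le\bm m}w_{\bm m}(\bm j)B_{\bm j}$.

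The heart of the argument, and the step I expect to be the main obstacle, is establishing (i) for the auxiliary product field $\{\eta_{\bm i}\}$: carefully transferring the $\varphi$-mixing property and rate from $\{\xi_{\bm i}\}$ to $\{\eta_{\bm i}\}$ in the half-space framework (the shift by $2m^{\star}$), and securing a uniform $L_2$ bound so that the covariance inequality and dominated convergence apply --- this is precisely where the fourth-moment part of Assumption~1 is used, since $\eta$ is built from products of two copies of $\xi$. Once these are in place, the remaining ingredients (Lemma~\ref{CLTLemma}, the Cram\'er--Wold reduction to a scalar $\varphi$-mixing random-field CLT, and, for (ii), Slutsky plus continuous mapping) are routine.
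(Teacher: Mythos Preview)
Your proposal is correct and follows essentially the same route as the paper: reduce via Lemma~\ref{CLTLemma} to partial sums of the product field $Y_{\bm i}(\bm j)=\xi_{\bm i}\xi_{\bm i+\bm j}-\gamma(\bm j)$, verify that this field (and any fixed linear combination of its coordinates) inherits strict stationarity and the $\varphi$-mixing rate from $\{\xi_{\bm i}\}$ up to a fixed shift by $2m^\star$ (this is the content of Lemma~\ref{propertiesY}), apply Deo's CLT via Cram\'er--Wold using the positive-definiteness hypothesis to guarantee a nonzero limiting variance, and finally get (ii) from (i) by $|\widetilde\Gamma_{\bm n}(\bm j)|/|\bm n|\to 1$, Slutsky, and the continuous mapping theorem. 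Your extra step of explicitly identifying the limiting covariances by dominated convergence is a nice addition but not a different argument.
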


For a multivariate random field the autocovariances $ \gamma(\bm j) $ are $ p \times p $ matrices with elements $  \gamma(\bm j)_{\nu\mu} $, $ 1 \le \nu, \mu \le p $. We have the following multivariate extension of Theorem~\ref{CLT}.

\begin{theorem}
	\label{CLTmultiv}
	Suppose that Assumption 1 holds true. 
	\begin{itemize}
		\item[(i)] We have 
$ \sqrt{ | \widetilde{\Gamma}_{\bm n}( \bm j ) | } ( \wh{\gamma}_{\bm n}( \bm j ) - \gamma( \bm j) ) \Rightarrow B_{\bm j} $, 
as $ \bm n \to \infty $, with $ B_{\bm j} = ( B_{\bm j}^{(\nu,\mu)} )_{1 \le \nu \le p \atop 1 \le \mu \le p } $, for
$ |\bm j | \le \bm m $, where $ ( B_{\bm j}^{(\nu,\mu)} ) $ is Gaussian with mean zero and covariances given by 
\begin{equation}
\label{CovFunctionB}
  \Cov( B_{\bm j}^{(\nu,\mu)}, B_{\bm j'}^{(\nu',\mu')} ) = \sum_{\bm i \in \Z^q} \mathbb E( \xi_{\bm 0}^{(\nu)} \xi_{\bm j}^{(\mu)} - (\gamma(\bm j) )_{\nu\mu} ) (  \xi_{\bm i}^{\nu'} \xi_{\bm i + \bm j'}^{(\mu')} - (\gamma(\bm j'))_{\nu'\mu'} ),
\end{equation}
for $ 1 \le \nu, \mu, \nu', \mu' \le p $, $ |\bm j| \le \bm m $ and $  |\bm j'| \le \bm m$, provided (\ref{CovFunctionB}) defines a positive definite quadratic form.
\item[(ii)] For $ \bm m \ge \bm 1 $ we have under the assumption of (i)
\[
	\sqrt{ | \bm n |} ( \wh{\sigma}_{\bm n}^2 - \mathbb E(  \wh{\sigma}_{\bm n}^2) ) \Rightarrow \mathcal{S},
\]
as $ \bm n \to \infty $, where $ \mathcal{S} \stackrel{d}{=} \sum_{|\bm j| \le \bm m} w_{\bm m}(\bm j) B_{\bm j} $.
\end{itemize}
\end{theorem}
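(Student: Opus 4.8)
The plan is to deduce both assertions from a central limit theorem for partial sums of a strictly stationary $\varphi$-mixing random field, following the same route as in the proof of Theorem~\ref{CLT}; the only genuinely new ingredient is the bookkeeping of the extra coordinate indices $(\nu,\mu)$. Fix $\bm m\ge\bm 1$ and introduce the lifted field
\[
 \eta_{\bm i}\coloneqq\big(\xi_{\bm i}^{(\nu)}\xi_{\bm i+\bm j}^{(\mu)}:|\bm j|\le\bm m,\ 1\le\nu,\mu\le p\big),\qquad\bm i\in\Z^q,
\]
which is strictly stationary and, by the Cauchy--Schwarz inequality together with the fourth-moment bound in Assumption~1, has finite second moments componentwise, its mean vector collecting the entries $(\gamma(\bm j))_{\nu\mu}$. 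By Lemma~\ref{CLTLemma}, the $(\nu,\mu)$-coordinate of $\sqrt{|\wt\Gamma_{\bm n}(\bm j)|}(\wh\gamma_{\bm n}(\bm j)-\gamma(\bm j))$ equals, up to an $o_{\mathbb P}(1)$ term that is uniform in $|\bm j|\le\bm m$ and in $\nu,\mu$, the corresponding coordinate of $|\bm n|^{-1/2}\sum_{\bm i\in\bm 1:\bm n}(\eta_{\bm i}-\mathbb{E}\eta_{\bm i})$; hence, by Slutsky's lemma, part~(i) reduces to the joint weak convergence
\[
 \frac{1}{\sqrt{|\bm n|}}\sum_{\bm i\in\bm 1:\bm n}\big(\eta_{\bm i}-\mathbb{E}\eta_{\bm i}\big)\ \Rightarrow\ \big(B_{\bm j}^{(\nu,\mu)}:|\bm j|\le\bm m,\ 1\le\nu,\mu\le p\big).
\]

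This last convergence I would prove by the Cramér--Wold device. For an arbitrary array $(\lambda_{\bm j}^{(\nu,\mu)})$ put $\zeta_{\bm i}\coloneqq\sum_{|\bm j|\le\bm m}\sum_{\nu,\mu}\lambda_{\bm j}^{(\nu,\mu)}\xi_{\bm i}^{(\nu)}\xi_{\bm i+\bm j}^{(\mu)}$, a strictly stationary scalar field with $\mathbb{E}(\zeta_{\bm 0}^2)<\infty$. Since $\zeta_{\bm i}$ is a fixed measurable function of $\{\xi_{\bm k}:\bm i-\bm m\le\bm k\le\bm i+\bm m\}$, translation invariance shows that the half-space $\sigma$-fields of $\{\zeta_{\bm k}\}$ in the sense of (\ref{phijr})--(\ref{phimax}) are contained in half-space $\sigma$-fields of $\{\xi_{\bm k}\}$ whose separation differs from the nominal one by at most a fixed constant $c=c(\bm m)$; consequently $\varphi_\zeta(r)\le\varphi(r-c)$ for $r>c$, and since shifting the summation index by $c$ while bounding $r^{q-1}$ by a constant multiple of $(r-c)^{q-1}$ does not affect convergence, $\sum_{r\ge1}r^{q-1}\varphi_\zeta^{1/2}(r)<\infty$ by (\ref{condphimix}). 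Hence the central limit theorem for strictly stationary $\varphi$-mixing random fields underlying (\ref{CLTRF}), cf.\ \cite{Deo}, applies to $\{\zeta_{\bm i}\}$ and yields $|\bm n|^{-1/2}\sum_{\bm i\in\bm 1:\bm n}(\zeta_{\bm i}-\mathbb{E}\zeta_{\bm i})\Rightarrow N(0,\tau^2)$ with $\tau^2=\sum_{\bm k\in\Z^q}\Cov(\zeta_{\bm 0},\zeta_{\bm k})$; this series converges absolutely since the $\varphi$-mixing covariance inequality, applied along the coordinate of largest lag, gives $|\Cov(\zeta_{\bm 0},\zeta_{\bm k})|\le 2\,\varphi^{1/2}\big((\max_{1\le j\le q}|k_j|-c)\vee 0\big)\,\mathbb{E}(\zeta_{\bm 0}^2)$, there being $O(r^{q-1})$ lattice points $\bm k$ with $\max_j|k_j|=r$, so that (\ref{condphimix}) again makes the total finite. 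Expanding $\tau^2$ by bilinearity identifies it with $\sum_{\bm j,\bm j'}\sum_{\nu,\mu,\nu',\mu'}\lambda_{\bm j}^{(\nu,\mu)}\lambda_{\bm j'}^{(\nu',\mu')}\Cov(B_{\bm j}^{(\nu,\mu)},B_{\bm j'}^{(\nu',\mu')})$ with the covariances in (\ref{CovFunctionB}), and the posited positive definiteness makes the limit non-degenerate. This proves~(i).

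For part~(ii), strict stationarity gives $\mathbb{E}(\xi_{\bm i}\xi_{\bm i+\bm j}')=\gamma(\bm j)$ for every $\bm i$, hence $\mathbb{E}(\wh\gamma_{\bm n}(\bm j))=\gamma(\bm j)$ and $\mathbb{E}(\wh\sigma_{\bm n}^2)=\sum_{|\bm j|\le\bm m}w_{\bm m}(\bm j)\gamma(\bm j)$, so that
\[
 \sqrt{|\bm n|}\,\big(\wh\sigma_{\bm n}^2-\mathbb{E}(\wh\sigma_{\bm n}^2)\big)=\sum_{|\bm j|\le\bm m}w_{\bm m}(\bm j)\,\sqrt{|\bm n|}\,\big(\wh\gamma_{\bm n}(\bm j)-\gamma(\bm j)\big).
\]
As $\bm m$ is fixed, $|\bm n|/|\wt\Gamma_{\bm n}(\bm j)|\to1$ for each $|\bm j|\le\bm m$, so by Slutsky's lemma the vector $\big(\sqrt{|\bm n|}(\wh\gamma_{\bm n}(\bm j)-\gamma(\bm j))\big)_{|\bm j|\le\bm m}$ has the same weak limit $\big(B_{\bm j}\big)_{|\bm j|\le\bm m}$ as the array obtained in~(i). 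Applying the continuous linear map $(x_{\bm j})\mapsto\sum_{|\bm j|\le\bm m}w_{\bm m}(\bm j)x_{\bm j}$ and the continuous mapping theorem then yields $\sqrt{|\bm n|}(\wh\sigma_{\bm n}^2-\mathbb{E}(\wh\sigma_{\bm n}^2))\Rightarrow\sum_{|\bm j|\le\bm m}w_{\bm m}(\bm j)B_{\bm j}=\mathcal S$, which is~(ii).

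The step I expect to be the main obstacle is inside part~(i): transferring the half-space $\varphi$-mixing structure of $\{\xi_{\bm i}\}$ to the lifted field with a rate one can still sum against the polynomial weight $r^{q-1}$, and, hand in hand with it, combining the $\varphi$-mixing covariance inequality with the fourth-moment bound to secure absolute convergence of the limiting covariance series --- this is precisely where the full strength of Assumption~1, and in particular condition~(\ref{condphimix}), is needed. Everything else reduces, via Lemma~\ref{CLTLemma}, Slutsky's lemma and the Cramér--Wold device, to the univariate random-field central limit theorem already invoked in the proof of Theorem~\ref{CLT}.
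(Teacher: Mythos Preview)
Your proof is correct and follows essentially the same route as the paper: after invoking Lemma~\ref{CLTLemma} to replace the scaling $\sqrt{|\wt\Gamma_{\bm n}(\bm j)|}$ by $\sqrt{|\bm n|}$, you apply the Cram\'er--Wold device to the lifted field, verify that the resulting scalar field inherits $\varphi$-mixing with a shifted rate satisfying (\ref{condphimix}), and then appeal to Deo's CLT; part~(ii) is the same continuous-mapping argument. The only cosmetic difference is that the paper proves Lemma~\ref{CLTLemma} inside the same argument rather than citing it, and writes the Cram\'er--Wold field as $Z_{\bm i}(\bm\lambda)$ with the centering built in, but the substance is identical.
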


It is worth mentioning that, by Lemma~\ref{propertiesY} and Lemma~\ref{RPHI} (a), the asymptotic variance of the random field $ \xi_{\bm i} \xi_{\bm i + \bm j} - \gamma( \bm j ) $, $ \bm i \in \Z^q $, 
\[
  \zeta_{\bm j}^2 = \Var( B_{\bm j} ) = \sum_{\bm i \in \Z^q} \mathbb E( \xi_{\bm 0} \xi_{\bm j} - \gamma( \bm j ))( \xi_{\bm i} \xi_{\bm i + \bm j} - \gamma( \bm j ) ),
\]
exists for each $ \bm j \in \Z^q $.

Next we aim at studying the centered and scaled sample autocovariances as a process indexed by the lag $ \bm j $, thus extending the above results. Observe that for fixed $ \bm n $ the estimator $ \wh{\gamma}_{\bm n}( \bm j ) $ is only well defined for $ \bm j \le \bm n $, and it is common to put $ \wh{\gamma}_{\bm n}( \bm j ) = \bm 0 $ if $ j_i > n_i $ for some $ i \in \{1, \dots, q \} $. This motivates to consider the multivariate random field
	\[
	  G_{\bm n}(\bm j) =  \sqrt{ | \widetilde{\Gamma}_{\bm n}( \bm j ) | } ( \wh{\gamma}_{\bm n}( \bm j ) - \gamma( \bm j) ), \qquad \bm j \in \Z^q,
	\]
	indexed by $ \Z^q $ and thus taking values in the space $ S = (\R^{p \times p} )^{\Z^q} $ of mappings $ \Z^q \to \R^{p \times p}  $. The space $ S = (\R^{p \times p})^{\Z^q} $ is a separable and complete metric space when equipped with the metric
\[ \rho( x, y) = \sum_{\bm k \in \Z^q} 2^{-\bm k} d_0( x_{\bm k}, y_{\bm k}  ), \] 
for $ x = \{ x_{\bm k} : \bm k \in \Z^q \}, y = \{ y_{\bm k} : \bm k \in \Z^q \} \in (\R^{p\times p})^\infty $, where $ d_0( \bm a, \bm b ) = \min( 1, \| \bm a - \bm b \|_\infty) $ for matrices $ \bm a, \bm b \in \R^{p \times p} $ and $ 2^{-|\bm k|} = \prod_{j=1}^k 2^{-|k_j|} $ for $ \bm k = (k_1, \dots, k_q) \in \Z^q $.

The question arises whether $ G_{\bm n} $ converges weakly in the space $ (S, \rho) $, as Theorem~\ref{CLTmultiv} already provides the convergence of the finite-dimensional distributions. Since weak convergence in $ (S,\rho) $ is not elaborated in the literature, let us briefly discuss some details. First note that 
convergence with respect to $ \rho $ is pointwise convergence. For $ x \in S $ and $ t_1, \dots, t_k \in \Z^q $ , $ k \in \N $, the projection $ \pi_{t_1, \dots, t_k} = ( x_{t_1}, \dots, x_{t_k} ) \in \R^{p \times pk} $, is continuous. The finite-dimensional distributions of a random element $ X = ( X_{\bm k} : \bm k \in  \Z^q )  $ taking values in $ (\R^{p \times p})^{\Z^q} $ are given by the laws of the random matrices $ ( X_{t_1}, \dots, X_{t_n} ) $ of dimension $ p \times (pn) $,  or, equivalently, 
by the laws of the $p^2n$-dimensional random vectors $ ( \text{vec} X_{t_1}, \dots \text{vec} X_{t_n}' )' $, for $ t_1, \dots, t_n \in \N $, where $ \text{vec} A $ denotes the vector obtained by stacking the columns of a matrix $ A $. The finite-dimensional sets, $  \pi_{t_1, \dots, t_k}^{-1}( H )  = \{ z \in S : ( z_{t_1}, \dots, z_{t_k} ) \in H $, $ H \subset \R^{p\times pk} $ measurable, are a $  \pi $-system.  For $ x \in S $ and $ \varepsilon > 0 $ let $ \mathcal{A}_{x,\varepsilon} $ be the system of sets $ A $ with $\mathring{A} \subset A \subset B(x,\varepsilon) $, where $ B(x, \varepsilon) $ is the open ball around $x$ with radius $ \varepsilon $. Choose $ \bm k \in \Z^q $ such that $ \sum_{\bm j \in \Z^q} 2^{-|\bm j|} - \sum_{|\bm j| \le \bm k} 2^{-|\bm j|} < \varepsilon/2 $. Consider now the uncountable many disjoint sets
\[ A_{\eta} = \{ y \in S : \| y_{\bm i} - x_{\bm i} \|_\infty \le \eta, | \bm i | \le |\bm k |, \text{\ and\ } \| y_{\bm i_0} - x_{\bm i_0} \|_\infty < \eta , \text{for some }  \bm i_0  \text{ with }  | \bm i_0 | \le | \bm k| \} \] 
for $ 0 < \eta < \varepsilon/(2 \cdot 3^q) $. Since $ \sum_{ \bm j \in \Z^q } 2^{-|\bm j|} = 3^q $ and $ d_0 $ is bounded by $1$, any $ y \in A_\eta $ satisfies
\[
  \rho(x,y) \le \sum_{|\bm j| \le \bm k} 2^{-|\bm k|} d_0( x_{\bm k}, y_{\bm k} ) + \varepsilon/2 \le \eta 3^q + \varepsilon/2 < \varepsilon.
\]
Hence, $ A_\eta \subset B(x,\varepsilon) $. This shows that the system of boundaries of the sets $ \mathcal{A}_{x,\varepsilon} $ contains uncountably many disjoint sets, such that by \cite{Bil1999} the weak convergence in $ (\R^{p \times pn})^{\Z^q} $ coincides with the convergence of the finite-dimensional distributions.

Therefore, Theorem~\ref{CLTmultiv} implies the following result.

\begin{theorem} 
	\label{CLTmultiv2}
	Under the conditions of Theorem~\ref{CLTmultiv}
\[
  G_{\bm n} \Rightarrow B,
\]	
as $ \bm n \to \infty $, where $ B = \{ B_{\bm j} : \bm j \in \Z^q \} $.
\end{theorem}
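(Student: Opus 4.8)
The plan is to derive Theorem~\ref{CLTmultiv2} by combining the convergence of the finite-dimensional distributions, already furnished by Theorem~\ref{CLTmultiv}, with the reduction carried out in the paragraph preceding the statement, namely that in $(S,\rho)$ weak convergence is equivalent to convergence of the finite-dimensional distributions. As a preliminary I would check that $B=\{B_{\bm j}:\bm j\in\Z^q\}$ is a bona fide random element of $(S,\rho)$. The covariances in \eqref{CovFunctionB} define a consistent family of centered Gaussian laws on the finite marginals $\R^{p\times pk}$, $k\in\N$, so Kolmogorov's extension theorem produces a probability measure on the product $\sigma$-field of $S=(\R^{p\times p})^{\Z^q}$ with exactly those marginals; since $(S,\rho)$ is a separable, complete metric space whose topology is that of coordinatewise convergence, its Borel $\sigma$-field coincides with the product $\sigma$-field, so this measure is a genuine Borel law, and by the same coincidence each $G_{\bm n}$ is a random element of $(S,\rho)$ --- each coordinate $G_{\bm n}(\bm j)$ being a measurable function of $\{\xi_{\bm i}\}$ and equal to $\bm 0$ once $j_i>n_i$ for some $i$, so that $G_{\bm n}$ is well defined on all of $\Z^q$.

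Next I would establish the convergence of the finite-dimensional distributions. Fixing lags $\bm j_1,\dots,\bm j_k\in\Z^q$ and $\bm m$ with $|\bm j_\ell|\le\bm m$ for all $\ell$, Theorem~\ref{CLTmultiv}(i) gives the joint weak convergence of $\bigl(\sqrt{|\wt{\Gamma}_{\bm n}(\bm j)|}\,(\wh{\gamma}_{\bm n}(\bm j)-\gamma(\bm j)):|\bm j|\le\bm m\bigr)$ to the Gaussian family $(B_{\bm j}:|\bm j|\le\bm m)$; since $\bm n\to\infty$ one has $n_i>(\bm j_\ell)_i$ for all large $\bm n$, so along the lags $\bm j_1,\dots,\bm j_k$ the process $G_{\bm n}$ eventually coincides with these centered and scaled sample autocovariances, and the continuous mapping theorem applied to the coordinate projection yields $(G_{\bm n}(\bm j_1),\dots,G_{\bm n}(\bm j_k))\Rightarrow(B_{\bm j_1},\dots,B_{\bm j_k})$ in $\R^{p\times pk}$, matching the finite-dimensional laws of the $B$ constructed above.

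Finally I would assemble the pieces. By the discussion preceding the theorem --- which shows that around any point of $S$ and at any scale $\varepsilon>0$ one can find a finite-dimensional set nested between its interior (containing the point) and the ball of radius $\varepsilon$ and having boundary of measure zero under any prescribed probability law, there being uncountably many disjoint candidate boundaries --- the $\pi$-system of finite-dimensional sets is convergence-determining in $(S,\rho)$, so that, by \cite{Bil1999}, weak convergence in $(S,\rho)$ is equivalent to convergence of the finite-dimensional distributions. The hypothesis needed for this reduction, that $\mathbb P(G_{\bm n}\in A)\to\mathbb P(B\in A)$ for every finite-dimensional set $A=\pi_{t_1,\dots,t_k}^{-1}(H)$ whose boundary is a null set for the law of $B$, follows from the previous paragraph and the Portmanteau theorem applied to the projected (pushforward) measures, using $\partial A\subseteq\pi_{t_1,\dots,t_k}^{-1}(\partial H)$ by continuity of the projection; hence $G_{\bm n}\Rightarrow B$ in $(S,\rho)$. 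The proof is thus essentially an assembly of results already available, and the one delicate point is the reduction just invoked --- one has to be sure that the finite-dimensional continuity sets of the limit law separate points from closed sets, which is exactly what the construction preceding the theorem guarantees, so that no separate tightness argument for $\{G_{\bm n}\}$ is required, a simplification special to countable product spaces carrying the product topology.
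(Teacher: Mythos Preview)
Your proposal is correct and follows essentially the same route as the paper: the paper's proof consists of nothing more than the observation that, by the discussion preceding the statement (weak convergence in $(S,\rho)$ coincides with convergence of the finite-dimensional distributions, via the Billingsley criterion on uncountably many disjoint boundaries in each $\mathcal{A}_{x,\varepsilon}$), Theorem~\ref{CLTmultiv} immediately yields the result. You have simply made explicit the points the paper leaves tacit --- the construction of $B$ as a random element of $(S,\rho)$ via Kolmogorov and the coincidence of the Borel and product $\sigma$-fields, and the fact that for fixed lags $G_{\bm n}(\bm j)$ eventually agrees with the scaled and centered sample autocovariance --- which is entirely appropriate.
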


\subsection{Subsampling}
\label{Sec:Subsampling}

Let us briefly review how subsampling for random fields works. For more details we refer to \cite{PolitisRomanoWolf}, especially Chapter~5.3 therein. Define
\[
\bm E_{\bm{u}}\coloneqq\{\bm t\in\mathbb Z^q: 0<t_k\leq u_k, k=1,2,\ldots,q\}
\]
and suppose that given the random field $\{X(\bm{t}),\bm{t}\in \bm E_{\bm{n}}\}$, we have a consistent estimator $\widehat{\theta}_{\bm{n}}=\widehat{\theta}_{\bm{n}}(X(\bm{t}),\bm{t}\in \bm E_{\bm{n}})$ of an unknown real-valued parameter $\theta(\mathbb P)$. If we define $J_{\bm{n}}( \mathbb P)$ as the sampling probability law\index{sampling probability law} of $\tau_{\bm{n}}\left(\widehat{\theta}_{\bm{n}}-\theta(\mathbb P)\right)$ under $ \mathbb P $, where $\tau_{\bm{n}}$ is a normalizing constant, and write $J_{\bm{n}}(x,\mathbb P)$ for the corresponding sampling probability distribution function\index{sampling probability distribution function}, i.e.\
\[
J_{\bm{n}}(x,\mathbb P)\coloneqq \mathbb P\{\tau_{\bm{n}}(\widehat{\theta}_{\bm{n}}-\theta(\mathbb P))\leq x\},
\qquad x \in \R,
\]
the aim of subsampling is to find an approximation of $J_{\bm{n}}(x,\mathbb P)$ 
by recomputing the statistic $\widehat{\theta}_{\bm{n}}$ over random fields of smaller size than $\bm{n}$ and by considering the
empirical distribution function of these subsampled values.
Thus, for $\bm{b},\bm{h}\in\mathbb Z^q$ we define these smaller random fields by
\[
Y_{\bm{j}}\coloneqq\{X(\bm{t}),\bm{t}\in \bm E_{\bm{j},\bm{b},\bm{h}}\},
\]
where $E_{\bm{j},\bm{b},\bm{h}}$ stands for the rectangle containing the points $\bm{i}\in\mathbb Z^q$ with $(j_k-1)h_k<i_k\leq(j_k-1)h_k+b_k$ for $1\leq k\leq q$. The point $\bm{b}$ represents the size of the smaller random field defined on $E_{\bm{j},\bm{b},\bm{h}}$, the point $\bm{h}$ determines how many of these smaller random fields are taken into account, as $Y_{\bm{j}}$ is only defined if $0<j_k\leq N_k$, where $N_k=\left\lfloor \frac{n_k-b_k}{h_k}\right\rfloor+1$. This means, the vector $ \bm h $ defines a grid and at each point of that grid a rectangle (block) defined by the block size $ \bm b $ is located. Depending on the choice of $ \bm h $ and $ \bm b $, those rectangles may overlap or not.
Now, one can easily see that the number of considered subrandom fields decreases when $\bm{h}$ increases and that it increases when $\bm{h}$ decreases with a maximum for $\bm{h}=(1,1,\ldots,1)$.

We further define the subsample value $\widehat{\theta}_{\bm{n},\bm{b},\bm{i}}$ as the statistic $\widehat{\theta}_{\bm{b}}$ evaluated at the smaller random field $Y_{\bm{i}}$, i.e.\ $\widehat{\theta}_{\bm{n},\bm{b},\bm{i}}\coloneqq\widehat{\theta}_{\bm{b}}(Y_{\bm{i}})$. The desired subsampling approximation\index{subsampling approximation} of the sampling probability distribution function $J_{\bm{n}}(x,\mathbb P)$ is then defined as
\[
L_{\bm{n},\bm{b}}(x)\coloneqq |\bm N|^{-1}\sum_{i_1=1}^{N_1}\sum_{i_2=1}^{N_2}\ldots\sum_{i_q=1}^{N_q}\mathds{1}_{\{\tau_{\bm{b}}(\widehat{\theta}_{\bm{n},\bm{b},\bm{i}}-\widehat{\theta}_{\bm{n}})\leq x\}},
\]
where $|\bm N|=\prod_{i=1}^q N_i$.

The question arises under which conditions this approximation is consistent. The main assumption for this to hold is that $J_{\bm{n}}(\mathbb P)$ converges weakly to a limit law $J(\mathbb P)$ with corresponding distribution function $J(x,\mathbb P)$, as $\bm n \to \infty$; the precise assumptions are as in \cite[Theorem~5.3.1]{PolitisRomanoWolf} and are listed in the theorem below, which is adapted to our setting. The assumptions mainly control the growth rate of $\bm{b}$ with respect to $\bm{n}$ and the mixing coefficients of the underlying random field. A common choice studied in greater detail in Section~\ref{Sec82} is $ \bm b = \lfloor \bm n^\gamma \rfloor $ for $ \gamma \in (0,1) $.

The following theorem establishes the consistency of subsampling for statistics calculated from the underlying random field or, going beyond that case, which depend on (collections) of the terms arising in the lag $ \bm j $ sample autocovariances. Define $ Y_{\bm i}( \bm j) = \xi_{\bm i} \xi_{\bm i + \bm j} - \gamma( \bm j ) $, $ \bm i \in \Z^q $, for any $ \bm j \in \Z^q $.

\begin{theorem}
	\label{Subsampling}
	Let $ \widehat{\theta}_{\bm n} $ be a real-valued statistic depending on $ \{ \xi_{\bm i} : \bm 0 < \bm i \le \bm n \} $, $ \{ Y_{\bm i}( \bm j) : \bm i \in \wt{\Gamma}_{\bm n}(\bm j)  \} $ or $  \{ Y_{\bm i}(\bm j) : \bm i \in \wt{\Gamma}_{\bm n}(\bm j), |\bm j | \le \bm m \} $, used for estimating the unknown real-valued parameter $ \theta(\mathbb P) $, such that
	\[
	J_{\bm n}( x, \mathbb P ) = \mathbb P( |\bm n|^{-1/2}( \widehat{\theta}_{\bm n} - \theta( \mathbb P )) \le x )
	\Rightarrow J(x,\mathbb P),
	\]
	as $ \bm n \to \infty $. Assume that $ \bm h$ is a non-zero constant and assume that
	\begin{equation}
	\label{Subs1}
	\prod_{j=1}^q b_j / (n_j-b_j) = o(1),
	\end{equation}
	as $ \bm n \to \infty $, and 
	\begin{equation}
	\label{WeakMixingCondSubs}
	\frac{1}{|\bm N|} \sum_{k=1}^{N^\star} k^{q-1} \varphi(k-2m^\star) = o(1),
	\end{equation}
	as $ \bm n \to \infty $. Then, in continuity points $ x $ of $ J(x,\mathbb P) $,
	\[
	L_{\bm n, \bm b} \stackrel{\mathbb P}{\to} J(x,\mathbb P),
	\]
	as $ \bm n \to \infty $. If $ J(\cdot, \mathbb P ) $ is continuous, then 
	\[ 
	q_{\bm n, \bm b}(\gamma) = \inf \{ x : L_{\bm n, \bm b}(x) \ge \gamma \}
	\]
	converges in probability ot $ q(\gamma) = \inf\{ x : J(x, \mathbb P ) \ge \gamma \} $ and
	\[
	\mathbb P( |\bm n|^{-1/2}( \widehat{\theta}_{\bm n} - \theta(\mathbb P) ) \le q_{\bm n, \bm b}( \gamma ) ) \to \gamma,
	\]
	as $ \bm n \to \infty $.
\end{theorem}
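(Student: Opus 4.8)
\emph{The plan} is to follow the template of \cite[Theorem~5.3.1]{PolitisRomanoWolf}, specialised to $\varphi$-mixing random fields; the single genuinely new ingredient is the covariance bookkeeping, in which the block geometry fixed by $\bm b$, $\bm h$ and $\bm N$ has to be reconciled with the lattice separations that drive the half-space mixing coefficients. Put $g_{\bm i}(x)\coloneqq\mathds 1\{\tau_{\bm b}(\wh\theta_{\bm n,\bm b,\bm i}-\theta(\mathbb P))\le x\}$ and $U_{\bm n,\bm b}(x)\coloneqq|\bm N|^{-1}\sum_{\bm i}g_{\bm i}(x)$, the subsampling distribution recentred at the true parameter. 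Since $\tau_{\bm b}(\wh\theta_{\bm n,\bm b,\bm i}-\wh\theta_{\bm n})=\tau_{\bm b}(\wh\theta_{\bm n,\bm b,\bm i}-\theta(\mathbb P))-R_{\bm n}$ with $R_{\bm n}=(\tau_{\bm b}/\tau_{\bm n})\,\tau_{\bm n}(\wh\theta_{\bm n}-\theta(\mathbb P))$, where $\tau_{\bm b}/\tau_{\bm n}=\sqrt{|\bm b|/|\bm n|}\le(\prod_{j=1}^q b_j/(n_j-b_j))^{1/2}=o(1)$ by \eqref{Subs1} and $\tau_{\bm n}(\wh\theta_{\bm n}-\theta(\mathbb P))$ is tight by the weak-convergence hypothesis, we get $R_{\bm n}=o_{\mathbb P}(1)$. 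As $L_{\bm n,\bm b}$ and $U_{\bm n,\bm b}$ are nondecreasing in $x$, on $\{|R_{\bm n}|\le\delta\}$ we have $U_{\bm n,\bm b}(x-\delta)\le L_{\bm n,\bm b}(x)\le U_{\bm n,\bm b}(x+\delta)$; hence it suffices to show $U_{\bm n,\bm b}(y)\stackrel{\mathbb P}{\to}J(y,\mathbb P)$ at every continuity point $y$ of $J(\cdot,\mathbb P)$ and then let $\delta\downarrow0$ through such points.

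\emph{Mean and variance of $U_{\bm n,\bm b}$.} By strict stationarity each $\wh\theta_{\bm n,\bm b,\bm i}=\wh\theta_{\bm b}(Y_{\bm i})$ has the law of $\wh\theta_{\bm b}$ computed on $\bm 1:\bm b$ (the constant $\gamma(\bm j)$ causes no difficulty, so this also covers the statistics built from the $Y_{\bm i}(\bm j)$), hence $\mathbb E\,U_{\bm n,\bm b}(x)=J_{\bm b}(x,\mathbb P)\to J(x,\mathbb P)$ at continuity points, using $\bm b\to\infty$ and $J_{\bm n}(\cdot,\mathbb P)\Rightarrow J(\cdot,\mathbb P)$. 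By Chebyshev it remains to prove $\Var(U_{\bm n,\bm b}(x))=|\bm N|^{-2}\sum_{\bm i,\bm i'}\Cov(g_{\bm i}(x),g_{\bm i'}(x))\to0$. Here $g_{\bm i}(x)$ is measurable with respect to the $\xi_{\bm t}$ indexed by the cell $E_{\bm i,\bm b,\bm h}$ enlarged by at most $m^\star$ in each coordinate on either side — the enlargement being needed only for statistics formed from the lag terms $Y_{\bm t}(\bm j)=\xi_{\bm t}\xi_{\bm t+\bm j}-\gamma(\bm j)$, $|\bm j|\le\bm m$, and absent otherwise. Split the pairs $(\bm i,\bm i')$ according to whether these enlarged supports intersect. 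For intersecting pairs use $|\Cov|\le\tfrac14$; since $\bm h$ is a fixed vector one has $|\bm N|\asymp\prod_j(n_j-b_j)/h_j$ and the number of such pairs is $O\bigl(|\bm N|\prod_j(b_j+2m^\star)/h_j\bigr)$, contributing $O\bigl(\prod_j b_j/(n_j-b_j)\bigr)+O\bigl((m^\star)^q/|\bm N|\bigr)$, which vanishes by \eqref{Subs1} and by \eqref{WeakMixingCondSubs} (the latter forcing $(m^\star)^q=o(|\bm N|)$, since $\sum_{k\le2m^\star}k^{q-1}\asymp(m^\star)^q$). For disjoint pairs the two enlarged supports lie on opposite sides of an axis hyperplane $\{t_k=r\}$, so $g_{\bm i}(x)\in\calA^-(k;\cdot)$ and $g_{\bm i'}(x)\in\calA^+(k;\cdot)$, whence $|\Cov|\le2\varphi(\text{lattice gap})$ by $\varphi$-mixing; grouping these pairs by the grid distance $k=\|\bm i-\bm i'\|_\infty$ — there being $O(k^{q-1})$ per $\bm i$ — and using that, for constant $\bm h$, a disjoint pair at grid distance $k$ has lattice separation that renders $\varphi(k-2m^\star)$ an admissible (if crude) upper bound for $|\Cov|$, one arrives at $O\bigl(|\bm N|^{-1}\sum_{k=1}^{N^\star}k^{q-1}\varphi(k-2m^\star)\bigr)=o(1)$ by \eqref{WeakMixingCondSubs}, with the convention $\varphi(r)\coloneqq1$ for $r\le0$.

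\emph{Passing to $L_{\bm n,\bm b}$, quantiles and coverage.} Thus $U_{\bm n,\bm b}(y)\stackrel{\mathbb P}{\to}J(y,\mathbb P)$ at every continuity point $y$, and then, via the sandwich above and the countability of the discontinuity set, $L_{\bm n,\bm b}(x)\stackrel{\mathbb P}{\to}J(x,\mathbb P)$ at every continuity point $x$. If $J(\cdot,\mathbb P)$ is continuous, monotonicity of $L_{\bm n,\bm b}$ together with the continuity and monotonicity of the limit upgrades pointwise to uniform convergence in probability (a P\'olya-type argument), so that $q_{\bm n,\bm b}(\gamma)=\inf\{x:L_{\bm n,\bm b}(x)\ge\gamma\}\stackrel{\mathbb P}{\to}q(\gamma)=\inf\{x:J(x,\mathbb P)\ge\gamma\}$; and since $\tau_{\bm n}(\wh\theta_{\bm n}-\theta(\mathbb P))\Rightarrow J(\mathbb P)$ has a continuous limit distribution function while $q_{\bm n,\bm b}(\gamma)$ converges in probability to the constant $q(\gamma)$, Slutsky's lemma gives $\mathbb P(\tau_{\bm n}(\wh\theta_{\bm n}-\theta(\mathbb P))\le q_{\bm n,\bm b}(\gamma))\to J(q(\gamma),\mathbb P)=\gamma$.

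\emph{Main obstacle.} The one delicate step is the variance bound: one must turn the subsampling grid geometry — the fixed spacing $\bm h$ (so that neighbouring blocks overlap heavily), the block size $\bm b$, the count $|\bm N|$, and the $m^\star$-enlargement of a block's support caused by the lag terms — into lower bounds on the lattice separations fed into the half-space $\varphi$-mixing coefficients, and to verify that the two competing error sources, the mass of overlapping blocks (controlled by \eqref{Subs1}) and the mixing tail summed against the $O(k^{q-1})$ count of distant blocks together with the $(m^\star)^q$ term (controlled by \eqref{WeakMixingCondSubs}), both tend to zero. Everything else is the standard subsampling scheme plus routine sandwiching, P\'olya and Slutsky arguments.
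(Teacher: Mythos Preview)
Your proof is correct and follows essentially the same route as the paper: both are the Politis--Romano--Wolf template (mean $=J_{\bm b}(x,\mathbb P)\to J(x,\mathbb P)$, variance $\to 0$ via a near/far split of block pairs, then sandwich to pass from the $\theta(\mathbb P)$-centred to the $\wh\theta_{\bm n}$-centred empirical, then P\'olya/Slutsky for the quantiles). The only cosmetic difference is that the paper first bounds the PRW strong-mixing coefficient $\widehat{\alpha}_{Y(\bm j)}(\cdot;|\bm b|)$ globally by $(1/2)\varphi(\cdot-2j^\star)$ using the half-space structure and $\alpha\le\tfrac12\varphi$, and then plugs this into the ready-made PRW variance estimate, whereas you apply the half-space $\varphi$-bound directly to each covariance; both land on the same sum controlled by \eqref{Subs1} and \eqref{WeakMixingCondSubs}.
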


\subsection{Extensions to multiplicative models}

An important subclass of spatial-temporal random fields arises as the additive superposition of a time series introducing the serial dependencies and a spatial univariate random field, i.e.
\[
  Y_{\bm i} = \eta_{i_1}^{(T)} + \eta_{\overline{\bm{\iota}}}^{(S)}, \quad\bm i=(i_1,\overline{\bm{\iota}})\in\mathbb Z^q.
\]
As discussed in the introduction, this is a reasonable model when capturing, say, $ n_1 $ images of a fixed experimental situation, in order to estimate the image, its spatial dependence structure and  the serial correlations of the data acquisition process. For a related detection procedures we refer to \cite{PrauseSteland2015} and \cite{PrauseSteland}, Section 4.2. Obviously, the above model can be also formulated as a multiplicative model, and thus we assume from now on that 
$ \xi_{\bm i} = ( \xi_{\bm i}^{(1)}, \dots, \xi_{\bm i}^{(p)} )' $ with 
\begin{align}\label{multimodel}
\xi_{\bm i}^{(j)}  \coloneqq\varepsilon_{i_1}^{(T)} \varepsilon_{\overline{\bm{\iota}}}^{(S,j)},\quad\bm i=(i_1,\overline{\bm{\iota}})\in\mathbb Z^q, \ j = 1, \dots, p,
\end{align}
where $\varepsilon^{(T)}=\{\varepsilon_{i}^{(T)}:i\in\mathbb Z\}$ is a strictly stationary second order $\varphi$-mixing univariate time series and $\varepsilon^{(S)}=\{\varepsilon_{\overline{\bm{\iota}}}^{(S)}:\overline{\bm{\iota}}\in\mathbb Z^{q-1}\}$ is a strictly stationary  $\varphi$-mixing multivariate random field taking values in $ \R^p $ with $ \max_{1 \le j \le p} \mathbb E( ( \varepsilon_{\bm 0}^{(S,j)} )^4 )<\infty$. We further assume that $\varepsilon^{(T)}$ and $\varepsilon^{(S)}$ have $\varphi$-mixing coefficients satisfying (\ref{condphimix}) for dimension 1 and $q-1$, respectively, and are independent from each other.

In the present setting, one may center the $ \xi_{\bm i} $ at their temporal average and therefore we define
\begin{align*}
\check{\gamma}_{\bm n}(\bm j)\coloneqq\frac{1}{\left|\widetilde{\Gamma}_{\bm n}(\bm j)\right|}\sum_{(i_1,\overline{\bm{\iota}})\in\widetilde{\Gamma}_{\bm n}(\bm j)}\left(\xi_{\bm{i}}-\overline{\xi}_{\cdot,\overline{\bm{\iota}}}\right)\left(\xi_{\bm{i+j}}-\overline{\xi}_{\cdot,\overline{\bm{\iota}}}\right)',
\end{align*}
where
$\overline{\xi}_{\cdot,\overline{\bm{\iota}}}\coloneqq\frac{1}{n_1}\sum_{l=1}^{n_1}\xi_{l,\overline{\bm{\iota}}} $,
and introduce the estimator
\begin{align*}
\check{\sigma}^2_{\bm n}\coloneqq\sum_{|\bm j|\leq \bm m}w_{\bm m}(\bm j)\check{\gamma}_{\bm n}(\bm j)
\end{align*}
for $ \sigma^2 $. 
We then obtain the following theorem.
\begin{theorem}\label{unknownrefsignal}
Suppose the noise process satisfies in addition to the assumptions of Theorem \ref{consistencyvariance}, the multiplicative model (\ref{multimodel}). Then $\check{\sigma}^2_{\bm n} \stackrel{\mathbb P}{\to} \sigma^2$, as $\bm n\to\infty$,
and $ \mathbb E \| \check{\sigma}^2_{\bm n} - \sigma^2 \|_\infty \to 0 $, as $\bm n\to\infty$.
\end{theorem}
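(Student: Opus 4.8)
The strategy is to compare $\check\sigma^2_{\bm n}$ with the uncentered estimator $\widehat\sigma^2_{\bm n}$ of Theorem~\ref{consistencyvariance}. Since Theorems~\ref{consistencyvariance} and~\ref{consistencyvariancel2} already give $\widehat\sigma^2_{\bm n}\to\sigma^2$ (in probability and, for the first moment, in $L_1$), it suffices to prove $\|\check\sigma^2_{\bm n}-\widehat\sigma^2_{\bm n}\|_\infty\stackrel{\mathbb P}{\to}0$ and $\mathbb E\|\check\sigma^2_{\bm n}-\widehat\sigma^2_{\bm n}\|_\infty\to 0$, and then invoke the triangle inequality. Under the multiplicative model~(\ref{multimodel}) the temporal average factorizes, $\overline\xi_{\cdot,\overline{\bm\iota}}^{(j)}=\overline\varepsilon^{(T)}\varepsilon_{\overline{\bm\iota}}^{(S,j)}$ with $\overline\varepsilon^{(T)}:=n_1^{-1}\sum_{l=1}^{n_1}\varepsilon_l^{(T)}$, so that $\xi_{\bm i}^{(j)}-\overline\xi_{\cdot,\overline{\bm\iota}}^{(j)}=(\varepsilon_{i_1}^{(T)}-\overline\varepsilon^{(T)})\varepsilon_{\overline{\bm\iota}}^{(S,j)}$: the centered field again has multiplicative form, with the temporal component replaced by its sample-centered version. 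We may and do assume $\mathbb E\varepsilon^{(T)}=0$ and $\mathbb E\varepsilon^{(S,j)}_{\bm 0}=0$ for all $j$; this is implicit in second-order stationarity and is in any case forced by the finiteness of $\sigma^2$, since by independence $\gamma(\bm k)=\gamma_T(k_1)\,\gamma_S(\overline{\bm k})$ (a temporal autocovariance times a spatial cross-autocovariance matrix) and absolute summability of the right-hand side over $\bm k\in\Z^q$ requires $\gamma_T(k_1)\to 0$, $\gamma_S(\overline{\bm k})_{jj}\to 0$.

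Substituting the identity above into $\check\gamma_{\bm n}(\bm j)$ (with each of the two centered factors referred to the temporal average at its own spatial site, so that for $\bm j=(j_1,\overline{\bm\jmath})$ the second factor equals $(\varepsilon_{i_1+j_1}^{(T)}-\overline\varepsilon^{(T)})\varepsilon_{\overline{\bm\iota}+\overline{\bm\jmath}}^{(S,\mu)}$) and subtracting $\widehat\gamma_{\bm n}(\bm j)$ writes every entry of $\check\gamma_{\bm n}(\bm j)-\widehat\gamma_{\bm n}(\bm j)$ as a finite sum of terms, each of which is $\overline\varepsilon^{(T)}$ or $(\overline\varepsilon^{(T)})^2$ times a $|\widetilde\Gamma_{\bm n}(\bm j)|^{-1}$-normalized sum over $\widetilde\Gamma_{\bm n}(\bm j)$ of products of $\varepsilon^{(T)}$-values (at times $i_1$ and $i_1+j_1$) and $\varepsilon^{(S,\cdot)}$-values (at spatial sites $\overline{\bm\iota}$ and $\overline{\bm\iota}+\overline{\bm\jmath}$). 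Since $\widetilde\Gamma_{\bm n}(\bm j)$ is the product rectangle $\widetilde\Gamma^{(1)}_{n_1}(j_1)\times\widetilde\Gamma_{\overline{\bm n}}(\overline{\bm\jmath})$ and $\varepsilon^{(T)}$ is independent of $\varepsilon^{(S)}$, each such normalized sum factorizes into a temporal factor — a bounded constant, or $(n_1-|j_1|)^{-1}$ times a partial sum of $\varepsilon^{(T)}$ over a window of length at most $n_1$, hence of size $O_{\mathbb P}(n_1^{-1/2})$ with uniform $L_2$-bound of that order — times a spatial factor which is a spatial sample (cross-)autocovariance $\widehat\gamma^{(S)}_{\overline{\bm n}}(\overline{\bm\jmath})$ of $\varepsilon^{(S)}$, hence $O_{\mathbb P}(1)$ and $L_2$-consistent for $\gamma_S(\overline{\bm\jmath})$ by Corollary~\ref{autocovest} (and its attendant variance estimate) applied to the $(q-1)$-dimensional field $\varepsilon^{(S)}$.

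Then one assembles the estimates. As $\overline\varepsilon^{(T)}$ is the sample mean of a mean-zero $\varphi$-mixing sequence with absolutely summable autocovariances, $\mathbb E(\overline\varepsilon^{(T)})^2=O(n_1^{-1})$, and every temporal partial sum over a window of length $\le n_1$ has second moment $O(n_1)$. Summing the expansion against the weights $w_{\bm m}(\bm j)$, which are bounded by $C_w$, over $|\bm j|\le\bm m$: the spatial lag index $\overline{\bm\jmath}$ contributes only the aggregate $\sum_{|\overline{\bm\jmath}|\le\overline{\bm m}}\|\widehat\gamma^{(S)}_{\overline{\bm n}}(\overline{\bm\jmath})\|_\infty$, which is $O_{\mathbb P}(1)$ and bounded in $L_2$ uniformly in $\bm n$ — because $\sum_{\overline{\bm\jmath}\in\Z^{q-1}}\|\gamma_S(\overline{\bm\jmath})\|_\infty<\infty$ (the $\varphi$-mixing/covariance bound underlying the very definition of $\sigma^2$) while the variances of the $\widehat\gamma^{(S)}_{\overline{\bm n}}(\overline{\bm\jmath})$ are $O(|\overline{\bm n}|^{-1})$ uniformly over $|\overline{\bm\jmath}|\le\overline{\bm m}$, with residual $o(1)$ under condition~(\ref{mnconv}) — so that only the temporal lag index $j_1$ yields a genuine combinatorial factor $2m_1+1=O(m^\star)$, multiplied by the $O_{\mathbb P}(m^\star n_1^{-1/2})$ size of $\sum_{|j_1|\le m_1}(n_1-|j_1|)^{-1}$ times the relevant partial sum. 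Collecting terms, $\|\check\sigma^2_{\bm n}-\widehat\sigma^2_{\bm n}\|_\infty\le|\overline\varepsilon^{(T)}|\,O_{\mathbb P}(m^\star n_1^{-1/2})+(\overline\varepsilon^{(T)})^2\,O_{\mathbb P}(m^\star)=O_{\mathbb P}(m^\star/n_1)$, which is $o_{\mathbb P}(1)$ because~(\ref{mnconv}) and $n_\star\le n_1$ give $m^\star=o(n_1^{1/3})$. For the $L_1$ statement one applies the Cauchy--Schwarz inequality to detach the $\overline\varepsilon^{(T)}$-factors (exploiting independence of $\varepsilon^{(T)}$ and $\varepsilon^{(S)}$ whenever the companion factor involves $\varepsilon^{(S)}$ alone), uses $\mathbb E(\overline\varepsilon^{(T)})^2=O(n_1^{-1})$ and the uniform $L_2$-bounds above, and bounds $\mathbb E\max_{\nu,\mu}|\cdot|\le p^2\max_{\nu,\mu}\mathbb E|\cdot|$ since $p$ is fixed; this yields $\mathbb E\|\check\sigma^2_{\bm n}-\widehat\sigma^2_{\bm n}\|_\infty=O(m^\star/n_1)\to 0$, and the theorem follows with Theorem~\ref{consistencyvariancel2}.

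\textbf{Main obstacle.} The delicate point is the summation over the $\prod_{i=1}^q(2m_i+1)=O((m^\star)^q)$ lag vectors: a term-by-term bound would add up $O((m^\star)^q)$ contributions of individual size $O_{\mathbb P}(n_1^{-1})$ and therefore need the much stronger condition $(m^\star)^q=o(n_1)$. The way around is to notice that the sample-mean centering couples to the $q-1$ spatial directions only through bona fide spatial sample autocovariances $\widehat\gamma^{(S)}_{\overline{\bm n}}(\overline{\bm\jmath})$, whose lag-aggregate over $|\overline{\bm\jmath}|\le\overline{\bm m}$ stays tight uniformly in $\bm n$ by absolute summability of $\gamma_S$, so the only genuine combinatorial loss is the single factor $m^\star$ from the temporal lag range; obtaining the $O(n_1^{-1/2})$/$O(n_1^{-1})$ rates for $\overline\varepsilon^{(T)}$ and the temporal partial-sum remainders uniformly over $|\bm j|\le\bm m$ is the more routine part and rests on the $\varphi$-mixing moment inequalities of Section~\ref{Sec:Proofs}.
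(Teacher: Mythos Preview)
Your overall strategy coincides with the paper's: reduce to showing the remainder $\check\sigma^2_{\bm n}-\widehat\sigma^2_{\bm n}$ is small, exploit the multiplicative structure to factor each cross term into a temporal part of order $O(n_1^{-1/2})$ (in $L_2$) times a spatial sample autocovariance, and then control the sum over lags. The paper writes this as a decomposition $\check\gamma_{\bm n}(\bm j)=\widetilde\gamma_{\bm n}(\bm j)+R_{\bm n}(\bm j)+R_{\bm n}(\bm 0)+\overline R_{\bm n}$ and treats the weighted sum $R'_{\bm n}=\sum w_{\bm m}(\bm j)R_{\bm n}(\bm j)$ by exactly the mechanism you describe: $R_{\bm n}(\bm j)=-\overline S_{\bm n}\,\overline S_{\bm n}(j_1)\,\widetilde\gamma^{(S)}_{\bm n}(\overline{\bm\jmath})$ and absolute summability of $\gamma_S(\overline{\bm\jmath})$ absorbs the spatial lag sum.

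There is, however, a misreading of the definition that changes the structure of the remaining cross terms. In the paper's $\check\gamma_{\bm n}(\bm j)$ \emph{both} factors are centered at the \emph{same} temporal average $\overline\xi_{\cdot,\overline{\bm\iota}}$, not at their respective spatial sites. Consequently the second factor is $\xi_{\bm i+\bm j}-\overline\xi_{\cdot,\overline{\bm\iota}}=\varepsilon^{(T)}_{i_1+j_1}\varepsilon^{(S)}_{\overline{\bm\iota}+\overline{\bm\jmath}}-\overline\varepsilon^{(T)}\varepsilon^{(S)}_{\overline{\bm\iota}}$, which does \emph{not} factor as $(\varepsilon^{(T)}_{i_1+j_1}-\overline\varepsilon^{(T)})\varepsilon^{(S)}_{\overline{\bm\iota}+\overline{\bm\jmath}}$. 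Expanding with the correct centering, the cross terms corresponding to the paper's $R_{\bm n}(\bm 0)$ and $\overline R_{\bm n}$ carry the spatial factor $\varepsilon^{(S)}_{\overline{\bm\iota}}(\varepsilon^{(S)}_{\overline{\bm\iota}})'$, i.e.\ a lag-\emph{zero} spatial second moment that does not depend on $\overline{\bm\jmath}$. For these terms your key device---``the spatial lag index contributes only the aggregate $\sum_{|\overline{\bm\jmath}|\le\overline{\bm m}}\|\widehat\gamma^{(S)}_{\overline{\bm n}}(\overline{\bm\jmath})\|_\infty=O_{\mathbb P}(1)$''---is not available, and the sum over $\overline{\bm\jmath}$ really contributes the combinatorial factor $(m^\star)^{q-1}$ you were trying to avoid. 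The paper simply asserts that these terms ``can be dealt with in a similar way''; what actually works here is the cruder bound: each such term has $L_1$-size $O(n_1^{-1})$ (product of two temporal means, each $O(n_1^{-1/2})$ in $L_2$, times a bounded spatial factor), and one then has to sum all $O((m^\star)^q)$ lags. Your clean argument applies verbatim to $R'_{\bm n}$, but for $R''_{\bm n}$ you need this separate, more direct estimate.
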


\subsection{Discussion of the mixing assumptions}

The above results assume that the random field is $ \varphi $-mixing. Inspecting the proof shows that this condition can be replaced by other weak dependence conditions ensuring that
\begin{equation}
\label{WhatWeReallyNeed}
| {\bm n } |^{-1} \mathbb E \left| \sum_{\bm k \in \bm 1:\bm n} \xi_{\bm 0} \xi_{\bm k + \bm j} - \gamma( \bm j ) \right|^2 = O(1),
\end{equation}
as $ \bm n \to \infty $, cf. Lemma~\ref{RPHI}. For example, a natural condition is to assume that $ \xi_{\bm k} $ is $ \rho^* $-mixing, which implies that the strictly stationary random field $ Y_{\bm k}(\bm j) = \xi_{\bm 0} \xi_{\bm k + \bm j} - \gamma( \bm j ) $ has a continuous spectral density function, cf. Lemma~\ref{LemmaSpectral}, which is then given by
\[
f(t) = \lim_{n \to \infty} n^{-q} \mathbb E \left| \sum_{\bm k \in \bm 1: n \bm 1 } e^{-i \bm k'\lambda_t} Y_{\bm k}( \bm j) \right|^2, \qquad t \in S^q.
\]
where from now on we put $ n \bm 1 = (n, \dots, n) $ for $ n \in \N $. Further,
$ S^q = \{ \bm x \in \R^q : \| x \|_2 = 1 \} $ and $ \lambda_t $ is the unique number $ \lambda \in (- \pi, \pi] $ such that
$ e^{-i \lambda_j} = t_j $ for $ j =1, \dots, q $, see \cite[Theorem~28.21]{Bradley}. Here $ \{ \xi_{\bm k} \} $ is $ \rho^*$-mixing, if $ \rho^*(r) \to 0 $, as $ r \to \infty $, where
\[ 
\rho^*(r) = \sup \rho( \sigma( \xi_{\bm k} : \bm k  \in A_1 ), \sigma( \xi_{\bm k} : \bm k \in A_ 2) ).
\]
The supremum is taken over all nonempty disjoint sets $ A_1, A_2 \subseteq \Z^q $ with distance $ \text{dist}(A_1,A_2) \ge r $, and the $ \rho$-mixing coefficient $ \rho( \mathcal{G}, \mathcal{H} ) $ for two sub-$\sigma$-fields $ \mathcal{G} $ and $ \mathcal{H} $ of $ \mathcal{F} $ is defined as 
\[ \rho( \mathcal{G}, \mathcal{H} ) = \sup | \text{Cor}( U, V ) |, \]
where the supremum is taken over all $ \mathcal{G}$-measurable random variables $ U $ and all $ \mathcal{H} $-measurable random variables $V$, both with finite second moment. The distance between two sets $ A_1, A_2 \subseteq \Z^q $ is defined by $ \text{dist}(A_1,A_2) = \inf\{ \| a_1 - a_2 \|_2 :  a_1 \in A_1, a_2 \in A_2 \}$, where $ \| \cdot \| $ is the usual Euclidean vector norm. 

It is also worth mentioning that \cite{BerkesMorrow} establish a strong invariance principle for partial sums of $ \alpha $-mixing random fields taking values in $\R^p $ and (\ref{WhatWeReallyNeed}), if the mixing coefficients, defined as 
\[
  \alpha( \bm E_1, \bm E_2 ) = \sup_{A \in \sigma( \xi_{\bm i} : \bm i \in \bm E_1 ) \atop B \in \sigma( \xi_{\bm i} : \bm i \in \bm E_2 ) } | \mathbb P( A \cap B ) - \mathbb P(A) \mathbb P(B) | 
\]
for any two disjoint non-empty sets $ \bm E_1, \bm E_2 \subseteq \N^{q} $, can be bounded in terms of the distance, namely by
\[
  \alpha( \bm E_1, \bm E_2 ) \le C \left( \text{dist}( \bm E_1, \bm E_2 )  \right)^{-q(1+\varepsilon)(1+2/\delta)}
\]
for some $ 0 < \varepsilon < 1/2 $ and $ \delta > 0 $ is such that $ E | \xi_{\bm 0}^{(j)} |^{2+\delta} < \infty $  holds. The result of \cite{BerkesMorrow} is limited to upper summation limits $ \bm n $ satisfying a constraint. That constraint is, however, satisfied for the important special case of proportional sample sizes, i.e. $ n_j = c_j n_1 $, $ j = 2, \dots, q $, holds for constants $ c_2, \dots, c_q $. The strong invariance principle of \cite{BerkesMorrow} and (\ref{WhatWeReallyNeed}) have been also established by \cite{BulinskiShaskin2006} for random fields, which are weakly dependent in the sense that the covariance between
$ f(\xi_{\bm i} : \bm i \in \bm I ) $ and $ g(\xi_{\bm i} : \bm i \in \bm J ) $, for any pair of disjoint finite sets $ \bm I, \bm J $ with sup-distance $r$, see (\ref{DefSupDistance}) for a definition of the latter distance, and any pair of bounded Lipschitz functions $ f $ and $ g $, is of the order $ \theta_r $, for a sequene $ \theta_r $ decaying exponentially fast.

\section{Threshold Cut-Off Estimation of the Asymptotic Variance}\label{ImpEst}
As the simulation studies in Section~\ref{SimVE} will show, the RMSE of the variance estimator $\widehat{\sigma}_{\bm{n}}^2$ increases for larger values of $\bm{m}$.  Moreover, the smallest possible RMSE in most situations is still quite high. 
This problem often arises when the spatial autocovariance matrix is sparse, i.e.\ the number of non-vanishing entries is much smaller than the number of null entries, or when it is close to sparsity. The latter typically occurs when the autocovariances decrease fast. This estimation of null entries increases the variance but does not reduce the bias. By thresholding sample autocovariances which are small in magnitude, we may reduce the variance. 

We allow for parameterized thresholding functions and propose to determine remaining unknown parameters by a subsampling procedure, which implicitly determines an approximating $m$-dependent random field. For that purpose, we adopt a known result about the consistency of subsampling for random fields, see \cite{PolitisRomanoWolf}, to the setting of this paper.

\subsection{Cut-off estimation}

These observations motivate to propose a thresholding estimator for $\sigma^2$, where the idea is to multiply $\widehat{\gamma}_{\bm{n}}(\bm{j})$ with some weight function $g$. This leads to the following estimator $\widehat{\sigma}_{\bm{n},th}^2$, defined as
\begin{align}\label{improvedvarestthres}
\widehat{\sigma}_{\bm{n},th}^2\coloneqq\sum_{|\bm{j}|\leq \bm{m}}w_{\bm{m}}(\bm{j})\widehat{\gamma}_{\bm{n}}(\bm{j})g\left(\widehat{\gamma}_{\bm{n}}(\bm{j}),c_{\bm{n}}(\bm{j})\right),
\end{align}
where $g$ is a function depending on $\widehat{\gamma}_{\bm{n}}(\bm{j})$ and some sequence $c_{\bm{n}}(\bm{j})$ and which satisfies the following assumption.

\textbf{Assumption 2:} Let $g\colon\mathbb R\times[0,\infty)\to[0,1]$ be a bounded function such that
\[
\mathbb E\left(g\left(\widehat{\gamma}_{\bm{n}}(\bm{j}),c_{\bm{n}}(\bm{j})\right)\right)\to 1,
\]
as $\bm{n}\to\infty$. 

The next theorem states some sufficient conditions for $\widehat{\sigma}_{\bm{n},th}^2$ to be weakly consistent.
\begin{theorem}\label{consistencysigmath}
Under the conditions of Theorem~\ref{consistencyvariance} and Assumption 2 the thresholding estimator $\widehat{\sigma}_{\bm{n},th}^2$ defined as in (\ref{improvedvarestthres}) is a weakly consistent estimator for $\sigma^2$.
\end{theorem}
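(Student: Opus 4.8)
The plan is to reduce the claim to Theorem~\ref{consistencyvariance} by showing that the thresholding modifies the non-thresholded estimator by a term that vanishes in probability. Write
\[
\widehat{\sigma}_{\bm n,th}^2 - \sigma^2 = \bigl(\widehat{\sigma}_{\bm n}^2 - \sigma^2\bigr) + \sum_{|\bm j|\le\bm m} w_{\bm m}(\bm j)\,\widehat{\gamma}_{\bm n}(\bm j)\Bigl(g\bigl(\widehat{\gamma}_{\bm n}(\bm j),c_{\bm n}(\bm j)\bigr) - 1\Bigr).
\]
The first bracket converges to $\bm 0$ in probability (and in $L_1$, for $p>1$) by Theorem~\ref{consistencyvariance} resp. Theorem~\ref{consistencyvariancel2}, so it suffices to control the remainder term
\[
R_{\bm n} \coloneqq \sum_{|\bm j|\le\bm m} w_{\bm m}(\bm j)\,\widehat{\gamma}_{\bm n}(\bm j)\bigl(g(\widehat{\gamma}_{\bm n}(\bm j),c_{\bm n}(\bm j)) - 1\bigr).
\]

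First I would bound $\|R_{\bm n}\|_\infty$ entrywise. Since $g$ takes values in $[0,1]$, we have $|g(\widehat{\gamma}_{\bm n}(\bm j),c_{\bm n}(\bm j)) - 1| = 1 - g(\widehat{\gamma}_{\bm n}(\bm j),c_{\bm n}(\bm j)) \le 1$, and by (W2) $|w_{\bm m}(\bm j)|\le C_w$. The natural route is to pass to $L_1$: by the triangle inequality and the Cauchy–Schwarz inequality applied entrywise,
\[
\mathbb E\|R_{\bm n}\|_\infty \le C_w \sum_{|\bm j|\le\bm m} \mathbb E\Bigl[\|\widehat{\gamma}_{\bm n}(\bm j)\|_\infty\bigl(1 - g(\widehat{\gamma}_{\bm n}(\bm j),c_{\bm n}(\bm j))\bigr)\Bigr] \le C_w \sum_{|\bm j|\le\bm m}\Bigl(\mathbb E\|\widehat{\gamma}_{\bm n}(\bm j)\|_\infty^2\Bigr)^{1/2}\Bigl(\mathbb E\bigl(1 - g(\widehat{\gamma}_{\bm n}(\bm j),c_{\bm n}(\bm j))\bigr)^2\Bigr)^{1/2}.
\]
Here $\mathbb E(1-g)^2 \le \mathbb E(1-g) = 1 - \mathbb E(g(\widehat{\gamma}_{\bm n}(\bm j),c_{\bm n}(\bm j))) \to 0$ by Assumption~2, using $0\le 1-g\le 1$. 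For the first factor, $\mathbb E\|\widehat{\gamma}_{\bm n}(\bm j)\|_\infty^2$ is bounded uniformly in $\bm j$ and $\bm n$: this follows from the $L_2$-consistency of $\widehat{\gamma}_{\bm n}(\bm j)$ (Corollary~\ref{autocovest}, which gives $\mathbb E\|\widehat{\gamma}_{\bm n}(\bm j) - \gamma(\bm j)\|_\infty \to 0$, or the sharper second-moment bounds established in its proof) together with the summability of $\|\gamma(\bm j)\|_\infty$, which holds under Assumption~1. The number of summands is $\prod_i(2m_i+1)$, which grows, so I would need the convergence $\mathbb E(1-g) \to 0$ to be fast enough, or — more robustly — invoke dominated convergence on the (finite for each $\bm n$, but growing) sum after rewriting it as an integral against a counting measure, noting that each term is bounded by the summable sequence $C_w C \|\gamma(\bm j)\|_\infty^{1/2}$ up to the vanishing factor; alternatively one argues pointwise in $\bm j$ using that $\bm m$ is fixed in the regime where one first proves it and then a diagonal/Toeplitz argument handles $\bm m\to\infty$.

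The main obstacle is precisely this interchange: $\mathbb E\bigl(1 - g(\widehat{\gamma}_{\bm n}(\bm j),c_{\bm n}(\bm j))\bigr)\to 0$ is only asserted for fixed $\bm j$, while the sum defining $R_{\bm n}$ ranges over a set of size $\prod_i(2m_i+1)\to\infty$. To close the argument cleanly I would either (a) strengthen Assumption~2's convergence to be uniform over $|\bm j|\le\bm m$ (which holds in the examples one has in mind, e.g. hard or soft thresholding at level $c_{\bm n}$, once $\gamma(\bm j)$ is summable and the thresholds are chosen consistently), or (b) exploit the summability of $\|\gamma(\bm j)\|_\infty$ to truncate: split the sum at $|\bm j|\le\bm K$ for fixed large $\bm K$ and $|\bm j| > \bm K$; the tail is uniformly small because $\mathbb E\|\widehat{\gamma}_{\bm n}(\bm j)\|_\infty^2$ is dominated by something summable, and the head has finitely many terms each handled by Assumption~2. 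Once $\mathbb E\|R_{\bm n}\|_\infty \to 0$ is established, Markov's inequality gives $\|R_{\bm n}\|_\infty \stackrel{\mathbb P}{\to} 0$, and combining with Theorem~\ref{consistencyvariance} yields $\|\widehat{\sigma}_{\bm n,th}^2 - \sigma^2\|_\infty \stackrel{\mathbb P}{\to} 0$, which is the assertion (for $p=1$ the norm is just absolute value and the same chain of inequalities applies verbatim). $\Box$
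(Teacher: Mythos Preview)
Your overall strategy — showing $\widehat{\sigma}_{\bm n,th}^2 - \widehat{\sigma}_{\bm n}^2 \to 0$ in probability and then invoking Theorem~\ref{consistencyvariance} — matches the paper's. The gap is in your handling of $R_{\bm n}$. You correctly diagnose the obstacle: Assumption~2 gives only pointwise convergence $\mathbb E(1-g)\to 0$, while the number of summands grows. But neither of your proposed fixes works as stated. Fix~(a) changes the hypothesis. Fix~(b) relies on the claim that $\mathbb E\|\widehat{\gamma}_{\bm n}(\bm j)\|_\infty^2$ is ``dominated by something summable'' in $\bm j$, which is false: by stationarity and Cauchy--Schwarz this quantity is bounded by $\max_j \mathbb E(\xi_{\bm 0}^{(j)})^4$, a constant not depending on $\bm j$, so the tail $\sum_{\bm K < |\bm j|\le\bm m}$ is not small.

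The paper's device is to add and subtract $\gamma(\bm j)$ \emph{before} estimating, decomposing
\[
\widehat{\gamma}_{\bm n}(\bm j)(1-g) = (\widehat{\gamma}_{\bm n}(\bm j)-\gamma(\bm j)) + \gamma(\bm j)(1-g) + (\gamma(\bm j)-\widehat{\gamma}_{\bm n}(\bm j))g,
\]
giving three sums $I_1,I_2,I_3$. The terms $I_1$ and $I_3$ are handled exactly by the bound $\sum_{|\bm j|\le\bm m}\mathbb E\|\widehat{\gamma}_{\bm n}(\bm j)-\gamma(\bm j)\|_\infty = o(1)$ already established in the proof of Theorem~\ref{consistencyvariance} (boundedness of $g$ suffices for $I_3$). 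For $I_2$ one now has the \emph{deterministic} factor $\|\gamma(\bm j)\|_\infty$, and dominated convergence applies directly: $f_{\bm n}(\bm j)=\|\gamma(\bm j)\|_\infty\,\mathbb E(1-g)\,\mathds{1}_{|\bm j|\le\bm m}\to 0$ pointwise with summable majorant $\|\gamma(\bm j)\|_\infty$. This add-and-subtract step is precisely the missing idea that dissolves your obstacle without any truncation or uniformity assumption.
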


For the special choice of $g$ as 
\[
g\left(\widehat{\gamma}_{\bm{n}}(\bm{j}),c_{\bm{n}}(\bm{j})\right)=\mathds{1}_{\left\{\left|\widehat{\gamma}_{\bm{n}}(\bm{j})\right|>c_{\bm{n}}(\bm{j})\right\}}
\]
we obtain the so-called cut-off estimator
\begin{align}\label{improvedvarest}
\widehat{\sigma}_{\bm{n},c}^2\coloneqq\sum_{|\bm{j}|\leq \bm{m}}w_{\bm{m}}(\bm{j})\widehat{\gamma}_{\bm{n}}(\bm{j})\mathds{1}_{\left\{\left|\widehat{\gamma}_{\bm{n}}(\bm{j})\right|>c_{\bm{n}}(\bm{j})\right\}},
\end{align}
which belongs to the  subclass of hard-thresholding estimators, where only autocovariances are taken into account whose absolute value exceeds the threshold $c_{\bm{n}}(\bm{j})$. Theorem~\ref{consistencysigmath} then simplifies to the following Corollary.
\begin{corollary}\label{consistencysigmac}
Under the conditions of Theorem~\ref{improvedvarestthres} and if for each $\bm{j}\in\mathbb Z^q$, $c_{\bm{n}}(\bm{j})\to c(\bm{j})$ for $\bm{n}\to\infty$ with $c(\bm{j})<|\gamma(\bm{j})|$, then $\widehat{\sigma}_{\bm{n},c}^2$ defined as in (\ref{improvedvarest}) is a weakly consistent estimator for $\sigma^2$.
\end{corollary}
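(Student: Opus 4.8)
\textbf{Proof proposal for Corollary~\ref{consistencysigmac}.}

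The plan is to deduce the result from Theorem~\ref{consistencysigmath} by verifying that the particular choice $g(\widehat{\gamma}_{\bm{n}}(\bm{j}),c_{\bm{n}}(\bm{j}))=\mathds{1}_{\{|\widehat{\gamma}_{\bm{n}}(\bm{j})|>c_{\bm{n}}(\bm{j})\}}$ satisfies Assumption~2 under the stated hypothesis $c_{\bm{n}}(\bm{j})\to c(\bm{j})<|\gamma(\bm{j})|$. Since $g$ here is an indicator, it is automatically a $[0,1]$-valued bounded function on $\mathbb R\times[0,\infty)$, so the only thing to check is that $\mathbb E(\mathds{1}_{\{|\widehat{\gamma}_{\bm{n}}(\bm{j})|>c_{\bm{n}}(\bm{j})\}})=\mathbb P(|\widehat{\gamma}_{\bm{n}}(\bm{j})|>c_{\bm{n}}(\bm{j}))\to 1$ as $\bm n\to\infty$, for each fixed $\bm j\in\mathbb Z^q$.

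First I would fix $\bm j$ and set $\varepsilon:=|\gamma(\bm{j})|-c(\bm{j})>0$. Since $c_{\bm{n}}(\bm{j})\to c(\bm{j})$, for all $\bm n$ large enough we have $c_{\bm{n}}(\bm{j})\le c(\bm{j})+\varepsilon/2=|\gamma(\bm{j})|-\varepsilon/2$. On the event $\{|\widehat{\gamma}_{\bm{n}}(\bm{j})-\gamma(\bm{j})|<\varepsilon/2\}$ the reverse triangle inequality gives $|\widehat{\gamma}_{\bm{n}}(\bm{j})|>|\gamma(\bm{j})|-\varepsilon/2\ge c_{\bm{n}}(\bm{j})$, hence
\[
\mathbb P\bigl(|\widehat{\gamma}_{\bm{n}}(\bm{j})|>c_{\bm{n}}(\bm{j})\bigr)\ge \mathbb P\bigl(|\widehat{\gamma}_{\bm{n}}(\bm{j})-\gamma(\bm{j})|<\varepsilon/2\bigr)\to 1,
\]
where the convergence is exactly the weak consistency of $\widehat{\gamma}_{\bm{n}}(\bm{j})$ for $\gamma(\bm{j})$ established in Corollary~\ref{autocovest} (indeed $L_r$-consistency, hence convergence in probability). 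In the multivariate case $p>1$ the same argument applies coordinatewise using the matrix norm $\|\cdot\|_\infty$, since Corollary~\ref{autocovest} gives $\|\widehat{\gamma}_{\bm{n}}(\bm{j})-\gamma(\bm{j})\|_\infty\stackrel{\mathbb P}\to 0$ and $|\gamma(\bm j)|$ is understood in that same norm. This verifies Assumption~2, and Theorem~\ref{consistencysigmath} then yields the weak consistency of $\widehat{\sigma}_{\bm{n},c}^2$.

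There is no real obstacle here; the corollary is a routine specialization. The one point requiring a little care is the direction of the inequality: the hypothesis is $c(\bm{j})<|\gamma(\bm{j})|$ (strictly below the true autocovariance magnitude), which is precisely what guarantees the threshold does not asymptotically kill a genuinely non-zero lag, so that $g\to 1$ in mean; had the inequality gone the other way the indicator would converge to $0$ and one would lose that lag's contribution. It is also worth noting that the result only asserts consistency, not that the cut-off improves efficiency — the variance reduction promised in the surrounding discussion is an empirical/finite-sample phenomenon documented in the simulations, not part of this statement.
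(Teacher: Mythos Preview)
Your proof is correct and follows the same approach as the paper: both verify Assumption~2 for the indicator $g$ by using the weak consistency of $\widehat{\gamma}_{\bm n}(\bm j)$ (Corollary~\ref{autocovest}) together with $c_{\bm n}(\bm j)\to c(\bm j)<|\gamma(\bm j)|$, and then invoke Theorem~\ref{consistencysigmath}. Your direct $\varepsilon$/triangle-inequality argument is in fact slightly cleaner than the paper's route, which passes through convergence in distribution of $|\widehat{\gamma}_{\bm n}(\bm j)|-c_{\bm n}(\bm j)$ and checks that $0$ is a continuity point of the degenerate limit law.
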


The extension to multivariate random fields is as follows. Define the matrix-valued threshold estimator
\begin{equation}
\label{improvedvarestthres_general}
\widehat{\sigma}_{\bm{n},th}^2 \coloneqq 
\sum_{|\bm{j}|\leq\bm{m}} w_{\bm{m}}(\bm{j})\widehat{\gamma}_{\bm{n}}(\bm{j}) \circ  g\left(\widehat{\gamma}_{\bm{n}}(\bm{j}),c_{\bm{n}}(\bm{j})\right),
\end{equation}
where for a $ p \times p $ matrix $ \bm A = (a_{\nu\mu} )_{1 \le \nu \le p \atop 1 \le \mu \le p } $ and a real number $ c \ge 0 $ we extend the function $ g $ to the domain $ \R^{p \times p} \times [0, \infty) $ by setting
\[
  g( \bm A, c ) = \biggl( g(a_{\nu\mu}, c ) \biggr)_{1 \le \nu \le p \atop 1 \le \mu \le p}.
\]
In (\ref{improvedvarestthres_general}) the symbol $ \circ $ denotes the Hadamard product, i.e. the element-wise multiplication of two matrices of the same dimension. This means, we threshold all elements of the variance-covariance matrix using the same threshold $c_{\bm{n}}(\bm{j})$ depending on the lag $ \bm j $.

\begin{theorem}\label{consistencysigmath_mult}
	Under the conditions of Theorem~\ref{consistencyvariance} and Assumption 2 the thresholding estimator $\widehat{\sigma}_{\bm{n},th}^2$ for a multivariate random field, defined in (\ref{improvedvarestthres_general}), is a weakly consistent estimator for $\sigma^2$, i.e.
	\[
	  \|  \widehat{\sigma}_{\bm{n},th}^2 - \sigma^2 \|_\infty  \stackrel{\mathbb P}{\to} 0,
	\]
	as $ \bm n \to \infty $.
\end{theorem}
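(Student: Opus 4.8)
The plan is to bootstrap off the unthresholded estimator $ \widehat{\sigma}_{\bm n}^2 = \sum_{|\bm j| \le \bm m} w_{\bm m}(\bm j) \widehat{\gamma}_{\bm n}(\bm j) $ of Section~\ref{EstAsyVar} together with the univariate thresholding result. First I would decompose
\[
  \widehat{\sigma}_{\bm n,th}^2 - \sigma^2 = \left( \widehat{\sigma}_{\bm n,th}^2 - \widehat{\sigma}_{\bm n}^2 \right) + \left( \widehat{\sigma}_{\bm n}^2 - \sigma^2 \right).
\]
Under Assumption~1 and the bandwidth condition (\ref{mnconv}), Theorem~\ref{consistencyvariance} gives $ \| \widehat{\sigma}_{\bm n}^2 - \sigma^2 \|_\infty \stackrel{\mathbb P}{\to} 0 $, so it suffices to prove $ \| \widehat{\sigma}_{\bm n,th}^2 - \widehat{\sigma}_{\bm n}^2 \|_\infty \stackrel{\mathbb P}{\to} 0 $. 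Since $ \| \cdot \|_\infty $ is the maximum over the $ p^2 $ matrix entries and a maximum of finitely many sequences converging to $ 0 $ in probability again converges to $ 0 $ in probability, it is enough to show $ \mathbb E \, \bigl| ( \widehat{\sigma}_{\bm n,th}^2 - \widehat{\sigma}_{\bm n}^2 )_{\nu\mu} \bigr| \to 0 $ for every fixed $ 1 \le \nu, \mu \le p $; here Assumption~2 is read entrywise, i.e.\ $ \mathbb E[ g( \widehat{\gamma}_{\bm n}(\bm j)_{\nu\mu}, c_{\bm n}(\bm j) ) ] \to 1 $.

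By the definition of the Hadamard product and of $ g(\bm A, c) $ one has
\[
  ( \widehat{\sigma}_{\bm n,th}^2 - \widehat{\sigma}_{\bm n}^2 )_{\nu\mu} = \sum_{|\bm j| \le \bm m} w_{\bm m}(\bm j) \, \widehat{\gamma}_{\bm n}(\bm j)_{\nu\mu} \, \left( g( \widehat{\gamma}_{\bm n}(\bm j)_{\nu\mu}, c_{\bm n}(\bm j) ) - 1 \right),
\]
and since $ g $ takes values in $ [0,1] $, so that $ 0 \le 1 - g \le 1 $, assumption (W2) yields
\[
  \mathbb E \, \bigl| ( \widehat{\sigma}_{\bm n,th}^2 - \widehat{\sigma}_{\bm n}^2 )_{\nu\mu} \bigr| \le C_w \sum_{|\bm j| \le \bm m} \mathbb E \left[ \, \bigl| \widehat{\gamma}_{\bm n}(\bm j)_{\nu\mu} \bigr| \left( 1 - g( \widehat{\gamma}_{\bm n}(\bm j)_{\nu\mu}, c_{\bm n}(\bm j) ) \right) \right].
\]
I would then split $ \widehat{\gamma}_{\bm n}(\bm j)_{\nu\mu} = \gamma(\bm j)_{\nu\mu} + ( \widehat{\gamma}_{\bm n}(\bm j)_{\nu\mu} - \gamma(\bm j)_{\nu\mu} ) $, producing a deterministic term $ C_w \sum_{|\bm j| \le \bm m} | \gamma(\bm j)_{\nu\mu} | \, \mathbb E[ 1 - g( \widehat{\gamma}_{\bm n}(\bm j)_{\nu\mu}, c_{\bm n}(\bm j) ) ] $ and, using $ 1 - g \le 1 $ once more, a stochastic term bounded by $ C_w \sum_{|\bm j| \le \bm m} \mathbb E \, | \widehat{\gamma}_{\bm n}(\bm j)_{\nu\mu} - \gamma(\bm j)_{\nu\mu} | $.

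For the deterministic term I would apply dominated convergence over $ \mathbb Z^q $: under Assumption~1, in particular the mixing condition (\ref{condphimix}), the autocovariances are absolutely summable, $ \sum_{\bm j \in \mathbb Z^q} \| \gamma(\bm j) \|_\infty < \infty $ (cf.\ Lemma~\ref{RPHI}), so $ | \gamma(\bm j)_{\nu\mu} | $ is a summable majorant, while the factor $ \mathbb E[ 1 - g(\cdots) ] $ is bounded by $ 1 $ and, since $ \bm m = \bm m_{\bm n} \to \infty $, tends to $ 0 $ for every fixed $ \bm j $ by Assumption~2; hence the deterministic term tends to $ 0 $. The stochastic term is exactly the quantity controlled inside the proof of Theorem~\ref{consistencyvariance}: the sample autocovariances are unbiased, $ \mathbb E \, \widehat{\gamma}_{\bm n}(\bm j) = \gamma(\bm j) $, with $ \operatorname{Var}( \widehat{\gamma}_{\bm n}(\bm j)_{\nu\mu} ) = O( | \widetilde{\Gamma}_{\bm n}(\bm j) |^{-1} ) = O( |\bm n|^{-1} ) $ uniformly over $ |\bm j| \le \bm m $ once $ m^\star = o(n_\star) $, and the window comprises $ O( (m^\star)^q ) $ lags, so the stochastic term is of order $ (m^\star)^q |\bm n|^{-1/2} $, which is $ o(1) $ under (\ref{mnconv}) (indeed $ |\bm n| \ge n_\star^q $ and $ (m^\star)^3 = o(n_\star) $ give $ (m^\star)^q |\bm n|^{-1/2} \le ( (m^\star)^2 / n_\star )^{q/2} \to 0 $). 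Combining the two terms proves $ \mathbb E \, \bigl| ( \widehat{\sigma}_{\bm n,th}^2 - \widehat{\sigma}_{\bm n}^2 )_{\nu\mu} \bigr| \to 0 $, hence convergence to $ 0 $ in probability entrywise, and the theorem follows.

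The main obstacle is not the multivariate bookkeeping, which is routine once one argues entrywise in $ \| \cdot \|_\infty $ and uses the entrywise variance bounds in place of the $ L_2 $-bounds available when $ p = 1 $, but rather the interplay between the growing number $ O((m^\star)^q) $ of lags in the window and the decay of the sample-autocovariance errors --- the step at which condition (\ref{mnconv}) is genuinely needed. Since that interplay is already resolved inside the proof of Theorem~\ref{consistencyvariance}, it remains only to check that multiplying by the bounded factor $ 1 - g $ does not spoil those estimates, which it plainly does not, and that the merely pointwise convergence granted by Assumption~2 is enough, which it is because the deterministic term is dominated by the summable sequence $ \| \gamma(\bm j) \|_\infty $ rather than requiring any uniformity in $ \bm j $.
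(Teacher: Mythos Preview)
Your proposal is correct and follows essentially the same route as the paper's proof: reduce to $\|\widehat{\sigma}_{\bm n,th}^2-\widehat{\sigma}_{\bm n}^2\|_\infty\stackrel{\mathbb P}{\to}0$, argue entrywise, split off $\gamma(\bm j)_{\nu\mu}$ to get a ``deterministic'' piece handled by dominated convergence with majorant $|\gamma(\bm j)_{\nu\mu}|$ and a ``stochastic'' piece controlled by the estimates already established for Theorem~\ref{consistencyvariance}. The paper organizes the same decomposition into three terms $I_1,I_2,I_3$ (adding and subtracting $\gamma(\bm j)g$ as well), but your two-term split $\widehat{\gamma}_{\bm n}(\bm j)(1-g)=\gamma(\bm j)(1-g)+(\widehat{\gamma}_{\bm n}(\bm j)-\gamma(\bm j))(1-g)$ is equivalent and slightly more economical.

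One small imprecision: the claim that $\operatorname{Var}(\widehat{\gamma}_{\bm n}(\bm j)_{\nu\mu})=O(|\bm n|^{-1})$ \emph{uniformly} in $|\bm j|\le\bm m$ is not quite right --- the constant $A(q,\varphi_{Y(\bm j)})$ in the bound from Lemma~\ref{rphi1234}(c) depends on $\bm j$ through the shifted mixing coefficients and grows like $(m^\star)^q$ (cf.\ (\ref{S1S2}) in the proof of Theorem~\ref{consistencyvariance}). The correct uniform bound is $O((m^\star)^q/(n_\star-m^\star)^q)$, leading to the rate $((m^\star)^3/(n_\star-m^\star))^{q/2}$ rather than $((m^\star)^2/n_\star)^{q/2}$. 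Since you explicitly defer the stochastic term to the proof of Theorem~\ref{consistencyvariance}, where this is handled correctly, the slip does not affect the validity of your argument.
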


Motivated by our empirical studies presented in the next section, we propose to use rules of the form
\begin{align}\label{rule1}
c_{\bm{n}}(\bm{j})=\frac{\left(\sqrt{j_1^2+ \cdots + j_q^2}\right)^{\alpha}}{n_1 \cdots n_q}-\delta.
\end{align}
with $\alpha\geq 0$ and $\delta=0.0001$. For this rule we have $c_{\bm{n}}(\bm{j})\to -\delta\eqqcolon c(\bm{j})$ for all $\bm{j}\in\mathbb Z^q$ and $\bm{n}\to\infty$, such that the condition $c(\bm{j})<|\gamma(\bm{j})|$ of Corollary~\ref{consistencysigmac} is fulfilled in any case.

Let us consider a special choice of the lag truncation constants $ \bm m $ for the case $ q = 2 $ corresponding to image data, in order to clarify, for a valid choice of the lag truncation constants $ \bm m $, possible values for $ \alpha $ and the relationship of the lag-dependent truncation rule $ c_{\bm n}( \bm j ) $ to a constant-in-lag truncation constant typically used in time series analysis. Hence, let us assume that $n_2=fn_1$ for some known $f>0$, the aspect ratio of the image, and let
\begin{align*}
m_i=c_in_1^{\gamma},\quad i=1,2,
\end{align*}
for constants $c_1, c_2$ and some $0<\gamma<1/3$, such that the sufficient condition (\ref{mnconv}) of Theorem~\ref{consistencyvariance} is satisfied. Clearly, for larger $ \gamma $ a larger number of spatial autocovariances is taken into account. Then for all $|\bm{j}|\leq\bm{m}$ we have
\[
c_{\bm{n}}(\bm{j})\leq\frac{\left(\sqrt{m_1^2+m_2^2}\right)^{\alpha}}{fn_1^2}-\delta=\frac{\left(c_1^2+c_2^2\right)^{\alpha/2}}{f}n_1^{\alpha\gamma-2}-\delta.
\]
If $\alpha=2/\gamma$, then at the boundary ($\bm{j}=\bm{m}$) the constant (in sample size) threshold
\[
c_{th}=\frac{\left(c_1^2+c_2^2\right)^{\alpha}}{f}-\delta
\]
applies, whereas for $\alpha<2/\gamma$ the boundary threshold converges to $-\delta$, as $n_1\to\infty$. For $\gamma$ slightly smaller than $1/3$, in order to take into account a large number of autocovariances, the parameter $\alpha$ can be selected slightly larger than 6. This is in nice agreement with our empirical findings concerning the choice of $\alpha$ when aiming at precise estimation of the asymptotic variance $ \sigma^2 $. However, when the estimated asymptotic variance is used to standardize an image test statistic, our simulations indicate that smaller values of $ \alpha $ (around $3.6$) are preferable, which are admissible for all admissible values of $ \gamma $.

\subsection{Data-adaptive estimation of lag truncation constants and  optimal thresholds}
\label{Sec:DataAdaptive}

The proposed estimators require to select the lag truncation constants $ \bm m $ and, for the cut-off threshold estimator, the thresholds $ c_{\bm n}( \bm j ) $. Let us assume that the thresholds are given by a parametric family of functions with parameter $ \alpha  $, i.e. $ c_{\bm n}( \bm j ) = c_{\bm n}( \bm j; \alpha )  $, e.g. as in (\ref{rule1}). To determine $ \bm m $ and $ \alpha $ we propose to proceed as follows:
To determine the optimal value $ \bm m_{opt} $ for the discrete-valued parameter $ \bm m $ from the set $ \{ k \bm 1 : k \in \N \} $, we  apply a sequential testing procedure which analyzes the sample autocovariances on the unit spheres with respect to the maximum norm and stops when they are no longer statistically different from zero. The statistical tests are based on subsampling in order to determine critical values. This procedure has the nice property that it is consistent for $m$-dependent random fields. For other fields, applying this procedure may serve as a guide, in order to approximate the random field under investigation by an $m$-dependent one, e.g. a spatial moving average model.

A reasonable measure to evaluate $\widehat{\sigma}_{\bm{n},c}^2$  is, of course, the root mean squared error (RMSE), which is a function of $ \bm m $ and $ \alpha $. We propose to plug-in $ \bm m_{opt} $ and then to determine $ \alpha $ by minimizing an estimate of the RMSE obtained by a subsampling procedure, too.

The subsampling-based proposal to deal with the estimators
(\ref{sigmaquadrat}) and (\ref{improvedvarest}) is as follows. 
In accordance with the above notation introduced in Section~\ref{Sec:Subsampling}, $Y_{\bm{j}}$ now stands for the subrandom field $\{\xi_{\bm{t}},\bm{t}\in \bm E_{\bm{j},\bm{b},\bm{h}}\}$, and we write $\widehat{\sigma}^2_{\bm{n},\bm{b},\bm{i}}$ for the subsample value that equals the statistic $\widehat{\sigma}^2_{\bm{b}}$ with constant weights equal to one evaluated at the random field $Y_{\bm{i}}$, i.e.\ $\widehat{\sigma}^2_{\bm{n},\bm{b},\bm{i}}=\widehat{\sigma}^2_{\bm{b}}(Y_{\bm{i}})$ with $w_{\bm{m}}(\bm{j})\equiv 1$. We choose the constant weights here as these do not depend on $\bm{m}$ (and thus not on $\bm{n}$) to avoid additional difficulties when working with the smaller random fields of size $\bm{b}$.

In order to find a subsampling approximation for the RMSE we need to find a reasonable centering term. A natural choice is $\widehat{\sigma}^2_{\bm{n}}$ itself. However, as the summation in (\ref{sigmaquadrat}) still depends on $\bm{m}$, we need to choose $ \bm m $ first. To do so we propose the following sequential testing procedure.
For each subrandom field of size $\bm{b}$ we start by considering the test statistic
\begin{align}\label{rmstat}
R_{\bm b}(\bm{m})\coloneqq\sum_{\bm{j}: |j_k| = m_k}\widehat{\gamma}_{\bm{b}}(\bm{j})
\end{align}
which estimates $ r(\bm m) = \sum_{\bm{j}: |j_k| = m_k} \gamma(\bm{j}) $. We reject $ H_0: r( \bm m ) = 0 $ if the $ 90\% $-confidence interval for $ r( \bm m ) $ obtained by subsampling does not contain $0$. We propose to determine $ \bm m_{opt} $ by the following sequential testing procedure: Starting with  $\bm{m}=\bm{1}$ and proceeding with $ k \bm 1 $, $ k = 1, 2, \dots $, we apply the above test until it rejects the associated null hypothesis $ H_0: r( \bm m' ) = 0 $ for the first time. Then we put $\bm{m}_{opt}=\bm{m}'-\bm{1}$.

The consistency of the proposed subsampling testing procedure follows from Theorem~\ref{Subsampling} and Theorem~\ref{CLT}, which imply that the root
\[
  K_{\bm n}(\mathbb P) =  \mathcal{L} \left(  |\bm n|^{-1/2} ( R_{\bm n}( \bm m ) - r(\bm m) )  \right)
\]
converges weakly to a Gaussian random vector:

\begin{corollary}
\label{CorollarySubs}
		Under the conditions of Theorem~\ref{CLT} and if (\ref{Subs1}) and (\ref{WeakMixingCondSubs}) hold,
		\[
		  K_{\bm n}(\mathbb P) \Rightarrow K(\mathbb P) = \mathcal{L}\left( \sum_{| \bm j | \le \bm m} B_{\bm j} \right),
		\]
		as $ \bm n \to \infty $, where $ ( B_{\bm j} : |\bm j| \le \bm m ) $ is as in Theorem~\ref{CLT}. If $ q(\gamma) $ denotes the $ \gamma $-quantile of $ K(\cdot, \mathbb P) $ and $ q_{\bm n, \bm b}(\gamma) $ the $ \gamma $-quantile of the empirical subsampling distribution for the statistic $ R(\bm m) $, then the asymptotic coverage of the confidence interval \[ [R_{\bm n}(\bm m) - |\bm n |^{-1/2} q_{\bm n, \bm b}(\gamma), R_{\bm n}(\bm m) + |\bm n |^{-1/2} q_{\bm n, \bm b}(\gamma) ] \] is the nominal confidence level $ 2 \gamma $, $ \gamma \in (0,1/2) $.
\end{corollary}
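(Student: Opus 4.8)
\textbf{Proof proposal for Corollary~\ref{CorollarySubs}.}

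The plan is to combine Theorem~\ref{CLT} and Theorem~\ref{Subsampling} in a straightforward way, treating the vector statistic $R_{\bm n}(\bm m)$ coordinate by coordinate (or via the Cramér–Wold device). First I would observe that $R_{\bm n}(\bm m) = \sum_{\bm j : |j_k| = m_k} \wh\gamma_{\bm n}(\bm j)$ is a finite linear combination of the sample autocovariances on the maximum-norm sphere of radius $\bm m$, and its centering $r(\bm m) = \sum_{\bm j : |j_k| = m_k} \gamma(\bm j)$ is the corresponding linear combination of the true autocovariances. By Lemma~\ref{CLTLemma} we may replace $\sqrt{|\wt\Gamma_{\bm n}(\bm j)|}(\wh\gamma_{\bm n}(\bm j) - \gamma(\bm j))$ by $|\bm n|^{-1/2} \sum_{\bm i \in \bm 1:\bm n}(\xi_{\bm i}\xi_{\bm i+\bm j} - \gamma(\bm j))$ up to an $o_{\mathbb P}(1)$ term uniformly over the finitely many lags in question, so that $|\bm n|^{-1/2}(R_{\bm n}(\bm m) - r(\bm m))$ has the same limit as $\sum_{\bm j : |j_k| = m_k} |\bm n|^{-1/2}\sum_{\bm i \in \bm 1:\bm n}(\xi_{\bm i}\xi_{\bm i+\bm j} - \gamma(\bm j))$. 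Theorem~\ref{CLT}(i) gives the joint weak convergence of the vector $(\sqrt{|\wt\Gamma_{\bm n}(\bm j)|}(\wh\gamma_{\bm n}(\bm j) - \gamma(\bm j)) : |\bm j| \le \bm m)$ to the Gaussian vector $(B_{\bm j} : |\bm j| \le \bm m)$, and by the continuous mapping theorem applied to the (linear, hence continuous) summation over the sphere we obtain
\[
  K_{\bm n}(\mathbb P) = \mathcal L\bigl( |\bm n|^{-1/2}(R_{\bm n}(\bm m) - r(\bm m)) \bigr) \Rightarrow \mathcal L\Bigl( \sum_{|\bm j| \le \bm m,\ |j_k| = m_k} B_{\bm j} \Bigr),
\]
which is the asserted Gaussian limit $K(\mathbb P)$ (with normalizing constant $\tau_{\bm n} = |\bm n|^{1/2}$). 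The positive-definiteness proviso of Theorem~\ref{CLT}(i) is what guarantees this limit is non-degenerate.

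Next I would verify that $R_{\bm n}(\bm m)$ fits the framework of Theorem~\ref{Subsampling}: it is a real-valued (or, after Cramér–Wold, scalar-projected) statistic depending only on the terms $Y_{\bm i}(\bm j) = \xi_{\bm i}\xi_{\bm i+\bm j} - \gamma(\bm j)$ for $\bm i \in \wt\Gamma_{\bm n}(\bm j)$, $|\bm j| \le \bm m$ — precisely the third class of statistics allowed there — with parameter $\theta(\mathbb P) = r(\bm m)$, and we have just shown $J_{\bm n}(x,\mathbb P) = \mathbb P(|\bm n|^{-1/2}(R_{\bm n}(\bm m) - r(\bm m)) \le x) \Rightarrow J(x,\mathbb P)$, the distribution function of $K(\mathbb P)$. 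Since by hypothesis $(\ref{Subs1})$ and $(\ref{WeakMixingCondSubs})$ hold and $\bm h$ is taken to be a non-zero constant, Theorem~\ref{Subsampling} applies: the subsampling distribution $L_{\bm n, \bm b}$ converges in probability to $J(\cdot,\mathbb P)$ at continuity points, and — since $K(\mathbb P)$ is Gaussian, hence $J(\cdot,\mathbb P)$ is continuous and strictly increasing in a neighbourhood of its relevant quantiles — the empirical subsampling quantiles $q_{\bm n, \bm b}(\gamma)$ converge in probability to the true quantiles $q(\gamma)$ of $K(\mathbb P)$, and the coverage statement $\mathbb P(|\bm n|^{-1/2}(R_{\bm n}(\bm m) - r(\bm m)) \le q_{\bm n, \bm b}(\gamma)) \to \gamma$ holds.

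It remains to translate this into the symmetric two-sided confidence interval. Using the coverage identity at levels $\gamma$ and $1-\gamma$ together with the symmetry of the centred Gaussian law $K(\mathbb P)$ (so that $q(1-\gamma) = -q(\gamma)$, and the subsampling quantiles inherit this asymptotically), the event $\{ r(\bm m) \in [R_{\bm n}(\bm m) - |\bm n|^{-1/2} q_{\bm n, \bm b}(\gamma),\ R_{\bm n}(\bm m) + |\bm n|^{-1/2} q_{\bm n, \bm b}(\gamma)] \}$ has asymptotic probability $(1-\gamma) - \gamma = 1 - 2\gamma$; rewriting the nominal level as $2\gamma$ with $\gamma \in (0, 1/2)$ gives the claim. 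The one genuinely delicate point is the uniformity in Lemma~\ref{CLTLemma} needed to pass from the raw sums to the $\wh\gamma_{\bm n}(\bm j)$ and, relatedly, the fact that Theorem~\ref{Subsampling} is invoked with $\bm m$ \emph{fixed}: one must be careful that the block statistic $R_{\bm b}(\bm m)$ is recomputed with the same fixed $\bm m$ on each subblock of size $\bm b \to \infty$, so that the subsampled root genuinely mimics $J_{\bm b}(\cdot,\mathbb P)$; since $\bm m$ does not grow this is immediate, but it is worth stating explicitly. The rest is bookkeeping with quantiles of a continuous distribution.
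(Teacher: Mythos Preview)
Your proposal is correct and follows precisely the route the paper intends: the corollary is stated there as an immediate consequence of Theorem~\ref{CLT} (to obtain the weak limit $K(\mathbb P)$ of the root) together with Theorem~\ref{Subsampling} (to transfer this to consistency of the subsampling distribution and its quantiles), and you have simply filled in the bookkeeping. The only wrinkle is your final paragraph on the two-sided coverage, where the arithmetic ``$1-2\gamma$, rewriting as $2\gamma$'' does not literally match the corollary's stated level; this reflects an imprecision in the corollary's quantile/sign conventions rather than a flaw in your argument, so you may simply record that the symmetric interval built from the $(1-\gamma)$- and $\gamma$-quantiles of the Gaussian limit has the claimed asymptotic coverage and leave it at that.
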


	Once we have determined $\bm{m}_{opt}$, we can evaluate $\widehat{\sigma}^2_{\bm{n}}$ for $\bm{m}_{opt}$ and use $\widehat{\sigma}^2_{\bm{n},\bm{m}_{opt}}$ as centering term for the subsampling approximation for the RMSE given a block size $ \bm b $, whose selection is discussed and studied in Section~\ref{Sec82}. This approach finally leads to the  approximation 
	\begin{align}\label{RMSEsub}
	\widehat{\textnormal{RMSE}}\left(\widehat{\sigma}^2_{\bm{n}}\right)\coloneqq\sqrt{N^{-1}\sum_{i_1=1}^{N_1}\sum_{i_2=1}^{N_2}\ldots\sum_{i_q=1}^{N_q}\left(\widehat{\sigma}^2_{\bm{n},\bm{b},\bm{i}}-\widehat{\sigma}^2_{\bm{n},\bm{m}_{opt}}\right)^2},
	\end{align}
	where $ \widehat{\sigma}^2_{\bm{n},\bm{b},\bm{i}} $, $ \bm i \in \bm 1:\bm N $, are the subsampling replicates and $ \bm N = (N_1, \dots, N_q) $. That approximation is now minimized to determine the tuning parameter $ \alpha $.

\section{Simulation Studies}\label{Simu}
In this section we present simulations about the proposed methods. Sections~\ref{SimVE} to \ref{Sec82} provide a thorough analysis of the variance estimators of Sections~\ref{EstAsyVar} and~\ref{ImpEst}, where for the latter we will focus our attention on the cut-off estimator. The results show that thresholding can improve in terms of the RMSE and, especially, is more robust with respect to the choice of the lag truncation constants $ \bm m $. Then we investigate the proposed data-adaptive procedure to estimate the remaining unknown tuning parameters of these estimators via subsampling. 

Section~\ref{SimsH0Test} provides results about the accuracy of significance tests for image data based on partial sums standardized with the proposed cut-off estimator. The simulations show that highly accurate testing is achieved by our proposal, even in the presence of substantial image correlations.

\subsection{Simulation Results for the Variance Estimators}\label{SimVE}

Having in mind applications in imaging, where quite often spatial dependencies are local, we investigate the statistical behavior of the variance estimators $\widehat{\sigma}^2_{\bm{n}}$ and $\widehat{\sigma}^2_{\bm{n},c}$ for selected stationary two- and three-dimensional random fields, including an autoregressive model to take into account serial correlations present in sequences of images.

Further, we  investigate both estimators for different weighting functions and different values of $\bm{m}$ and $\bm{n}$ and analyse the resulting behaviour of the root mean square error and the bias. Finally, we will also analyse the second estimator dependent on the sequence $c_{\bm{n}}(\bm{j})$ and make a concrete proposal for its selection.

\subsubsection{Spatial Moving Average Models of Order One in Two Dimensions}\label{sub811}
The first model for the random field for which we want to analyse the behaviour of the estimators is a spatial moving average model of order one in two dimensions. For that purpose, take i.i.d.\ innovations $\eta_{i,j}$ with $\eta_{i,j}\sim\mathcal{N}(0,1)$ for all $i$ and $j$ and define
\begin{align*} 
(\textnormal{M1})\,\,\, \varepsilon_{i,j}&\coloneqq a_1\eta_{i-1,j-1}+a_2\eta_{i-1,j}+a_3\eta_{i-1,j+1}+a_4\eta_{i,j-1}\\&+a_5\eta_{i,j}+a_6\eta_{i,j+1}+a_7\eta_{i+1,j-1}+a_8\eta_{i+1,j}+a_9\eta_{i+1,j+1}
\end{align*}
with real weights $a_k,\ k=1,\ldots,9$. We first take $a_5=1$ and $a_k=0.3$ for $k\neq 5$. 
We can now calculate the theoretical asymptotic variance as follows. If we write $\bm{a}=(a_1,\ldots,a_9)^{\prime}$ for the column vector with the weights $a_k, k=1,\ldots,9$, and
\begin{align*}  \bm{\eta}_{i,j}=(\eta_{i-1,j-1},\eta_{i-1,j},\eta_{i-1,j+1},\eta_{i,j-1},\eta_{i,j},\eta_{i,j+1},\eta_{i+1,j-1},\eta_{i+1,j},\eta_{i+1,j+1})^{\prime} 
\end{align*}
for the column vector with the innovation $\eta_{i,j}$ and all the innovations that are located around it, we can rewrite (M1) as scalar product of the vectors $\bm{a}$ and $\bm{\eta}_{i,j}$, i.e.\ as $\varepsilon_{i,j}=\bm{a}^{\prime}\bm{\eta}_{i,j}$. Thus, we obtain

\begin{align}\label{calculationsigmaquadrat}
\sigma^2=\sum_{(h_1,h_2)\in\mathbb Z^2}\mathbb E\left(\varepsilon_{0,0}\varepsilon_{h_1,h_2}\right)
=\sum_{(h_1,h_2)\in\mathbb Z^2}\mathbb E\left((\bm{a}^{\prime}\bm{\eta}_{0,0})(\bm{a}^{\prime}\bm{\eta}_{h_1,h_2})\right)
=\sum_{i,j=1}^9a_ia_j,
\end{align}
as only $9^2=81$ summands in the above summation over $(h_1,h_2)\in\mathbb{Z}^2$ are non-zero, namely one for each combination of $a_i$ and $a_j$, $i,j=1,\ldots,9$. For our concrete vector $\bm{a}$ one can thus calculate that the theoretical variance is $\sigma^2=11.56$. 

Throughout this study we choose the weights $w_{\bm{m}}(\bm{j})$, that are needed for the calculation of $\widehat{\sigma}_{\bm{n}}^2$ in (\ref{sigmaquadrat}), as the product of one-dimensional weights, 
where we take the $w_{m_i}(j_i)$, $1\leq i\leq 2$, either as constant weights (CW) with $w_{m_i}(j_i)=1$, $1\leq i\leq 2$, or as the Quadratic Spectral weight sequence (QS), i.e.\
\begin{align*}
w_{m_i}(j_i)=\frac{25}{12\pi^2\left(j^{(i)}_m\right)^2}\left(\frac{\sin(6\pi j^{(i)}_m/5)}{6\pi j^{(i)}_m/5}-\cos(6\pi j^{(i)}_m/5)\right)
\end{align*}
for $1\leq i\leq 2$ and $j^{(i)}_m=j_i/(m_i+b_w)$, where $b_w$ is a bandwidth parameter that still has to be chosen. In dimension one \cite{Andrews} could show that the QS kernel with a properly chosen bandwidth $b_w$ is the optimal choice with respect to the asymptotic MSE, 
see Theorem~2 in \cite{Andrews}. In his simulation studies, however, the author could also show that in a lot of situations the constant weights lead to more efficient estimates than the quadratic spectral weights. The disadvantage of the constant weights, however, is that in contrast to the QS weights they do not necessarily generate nonnegative variance estimates.

Now, we first consider a random field of size $(30{,}40)$ and investigate our estimator in (\ref{sigmaquadrat}) for several values of $\bm{m}=(m_1,m_2)\in\mathcal M_2$ with
\begin{align}\label{setm2}
\mathcal M_2&=\{(m_1,m_2)\in\{0,\ldots,29\}^2:m_1=m_2\}\\&\cup\{(1,2),(2,3),(3,4),(4,5),(5,6),(6,7),(10,13),(15,20)\}.\notag
\end{align}
Note, that $\mathcal M_2$ mainly consists of pairs with equal components. These pairs are quite natural choices for $\bm m$, as (M1) is a symmetric model where the influence of the innovations around $\eta_{i,j}$ is the same in both directions. Nevertheless, we also consider some pairs where the components differ, including the cases $(10{,}13)$ and $(15{,}20)$ which correspond to a third and a half of the random field of size $(30{,}40)$, respectively.

The first table shows the values of the estimator and its RMSE for selected choices of $\bm{m}\in\mathcal M_2$ for the constant kernel based on 10000 repetitions.
\begin{table}[H]
\centering
\begin{tabular}{crrrrrr}
\hline
 $\bm{m}$ & (0,0) & (1,1) & (2,2) & (3,3) & (4,4) \\
\hline
Mean($\widehat{\sigma}^2_{\bm{n}}$) & 1.7217 & 8.6957 & 11.5924 & 11.5879 & 11.5588 \\
RMSE($\widehat{\sigma}^2_{\bm{n}}$) & 9.8391 & 2.9996 & \textcolor{red}{1.8478} & 2.8328 & 3.8371 \\
\hline
\end{tabular}
\vspace{0.5cm}

\centering
\begin{tabular}{crrrrr}
\hline
 $\bm{m}$ & (5,5) & (6,6) & (7,7) & (8,8) & (9,9) \\
\hline
Mean($\widehat{\sigma}^2_{\bm{n}}$)  & 11.5290 & 11.5233 & 11.5297 & 11.5304 & 11.5165 \\
RMSE($\widehat{\sigma}^2_{\bm{n}}$) & 4.8846 & 5.9867 & 7.1408 & 8.3481 & 9.5736  \\
\hline
\end{tabular}
\caption{Simulation results for (M1) for different values of $\bm{m}$ with constant kernel.}
\label{tab1}
\end{table}

Table~\ref{tab1} shows that the RMSE, seen as a function of $\bm{m}$, is convex with a minimum in $\bm{m}=(2{,}2)$, which corresponds to the largest lag for which the theoretical autocovariance in (M1) is non-zero. We also see that for values of $\bm{m}$ larger than $(2{,}2)$ the RMSE increases quite fast while the bias does not vary much. This implies that the variance increases.

Next, we also examine our estimator for the QS kernel, for which we still have to choose the bandwidth parameter $b_w$ which we choose the same in all dimensions. We want to choose this parameter in such a way that the RMSE gets as small as possible. Figure~\ref{Modell1bw} depicts for different values of $b_w$ the optimal RMSE with respect to $\bm{m}$ (i.e.\ when $\bm{m}$ ranges over the values of $\mathcal M_2$) and the corresponding bias.
\begin{figure}
  \begin{center} 
  \includegraphics[width=60mm]{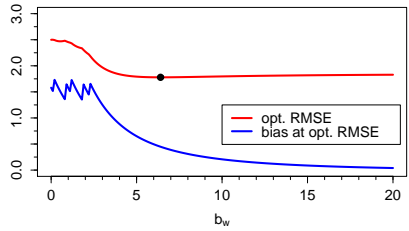}
  \caption{Optimal RMSE with respect to $ \bm m $ and corresponding bias of (M1) for bandwidths $b_w$.}
  \label{Modell1bw}
  \end{center}
\end{figure}

To abbreviate the notation we write $\bm{m}+b_w$ for $(m_1,m_2)+(b_w,b_w)$.
We see that the smallest RMSE is achieved for $b_w=6.4$ which corresponds to $\bm{m}=(2{,}2)$. Thus, we will fix the bandwidth at this value.
Furthermore, we can also see that for $b_w\geq 4$ the RMSE stays nearly fixed such that different choices of $b_w$ near the optimal one would lead to nearly as good estimates as for the optimal bandwidth $b_w=6.4$. The jags in the bias for small choices of the bandwidth parameter arise from the fact, that we allow $\bm{m}$ to range over all the values of $\mathcal M_2$. A jag occurs, when the value of $\bm{m}$, for which the smallest RMSE is attained, changes. For example, for $b_w=0$ the optimal $\bm{m}$ is $(4{,}5)$ whereas for $b_w\geq2.3$ it is $(2{,}2)$, which is again the largest lag for which the theoretical autocovariance in (M1) is non-zero. For $0<b_w\leq 2.3$ the optimal value of $\bm{m}$ varies.

Table~\ref{tab2} shows the values of the estimator and its RMSE in the above scenario of (M1) for selected choices of $\bm{m}$ for the QS kernel with $b_w=6.4$ for 10000 repetitions.
\begin{table}[H]
\centering
\begin{tabular}{crrrrrr}
\hline
 $\bm{m}$ & (0,0) & (1,1) & (2,2) & (3,3) & (4,4) \\
\hline
Mean($\widehat{\sigma}^2_{\bm{n}}$) & 1.7217 & 8.4380 & 11.1120 & 11.2030 & 11.2502 \\
RMSE($\widehat{\sigma}^2_{\bm{n}}$) & 9.8391 & 3.2388 & \textcolor{red}{1.7793} & 2.5630 & 3.3187 \\
\hline
\end{tabular}
\vspace{0.5cm}

\centering
\begin{tabular}{crrrrr}
\hline
 $\bm{m}$ & (5,5) & (6,6) & (7,7) & (8,8) & (9,9) \\
\hline
Mean($\widehat{\sigma}^2_{\bm{n}}$)  & 11.2799 & 11.3132 & 11.3471 & 11.3741 & 11.3899 \\
RMSE($\widehat{\sigma}^2_{\bm{n}}$) & 4.0534 & 4.7795 & 5.4992 & 6.2172 & 6.9214 \\
\hline
\end{tabular}
\caption{Simulation results for (M1) for different values of $\bm{m}$ with QS kernel and bandwidth $\bm{m}+6.4$.}
\label{tab2}
\end{table}

The smallest RMSE is now $1.7793$ and again achieved for $\bm{m}=(2{,}2)$. Similar as in Table~\ref{tab1} the RMSE increases for values of $\bm{m}$ larger than $(2{,}2)$, but not as rapid as in Table~\ref{tab1}. Thus, we see that in this scenario the QS kernel with a reasonably chosen bandwidth is nearly $4\%$ more efficient than the constant kernel which is in accordance with the simulation results in \cite{Andrews}.

As this simulation study shows, the smallest RMSE is still quite high and what is even more problematic, the RMSE depends quite heavily on the proper choice of $\bm{m}$. As already explained in the motivation for the improved variance estimator, this problem often appears when a lot of `zeros' have to be estimated. This is the case here, as the dependencies of the random field are only weak, as the autocovariances vanish for $|\bm{j}|>2$. The estimation of many such null entries worsens the RMSE a lot.
Thus we shall now investigate the improved variance estimator $\widehat{\sigma}_{\bm{n},c}^2$ in (\ref{improvedvarest}) which is designed to circumvent this issue. For that we need to choose an appropriate cutting rule $c_{\bm{n}}(\bm{j})$. 

We focus on the rule $ c_{\bm n}( \bm j) $ introduced in the previous section, which is parameterized by $ \alpha > 0 $.
The parameter $\alpha$ is a further tuning parameter that we want to choose in such a way that the RMSE of $\widehat{\sigma}_{\bm{n},c}^2$ attains the smallest possible value when $\bm{m}$ ranges again over the values of $\mathcal M_2$. 

In the following, we investigate the same situation as before, the spatial moving average model (M1) with weights $a_5=1$ and $a_i=0.3$ for $i\neq5$. 
Figure \ref{Modell1CWQSalpha} shows for both weighting schemes the curves of the optimal RMSE and the corresponding bias as a function of $\alpha$.
\begin{figure}[h]
  \begin{center}
   \subfigure[Constant kernel.]{\includegraphics[width=60mm]{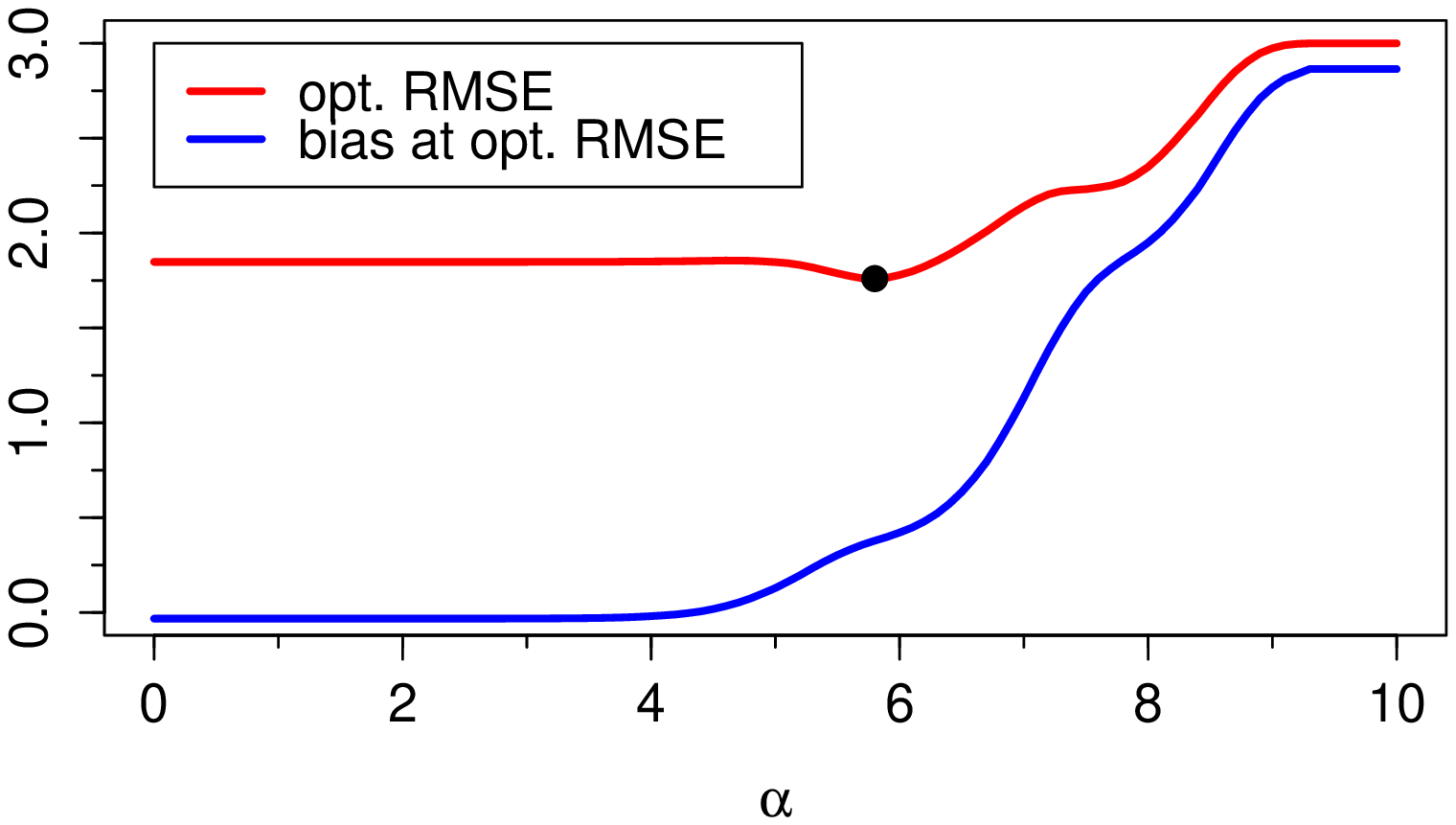}}\qquad
   \subfigure[QS kernel.]{\includegraphics[width=60mm]{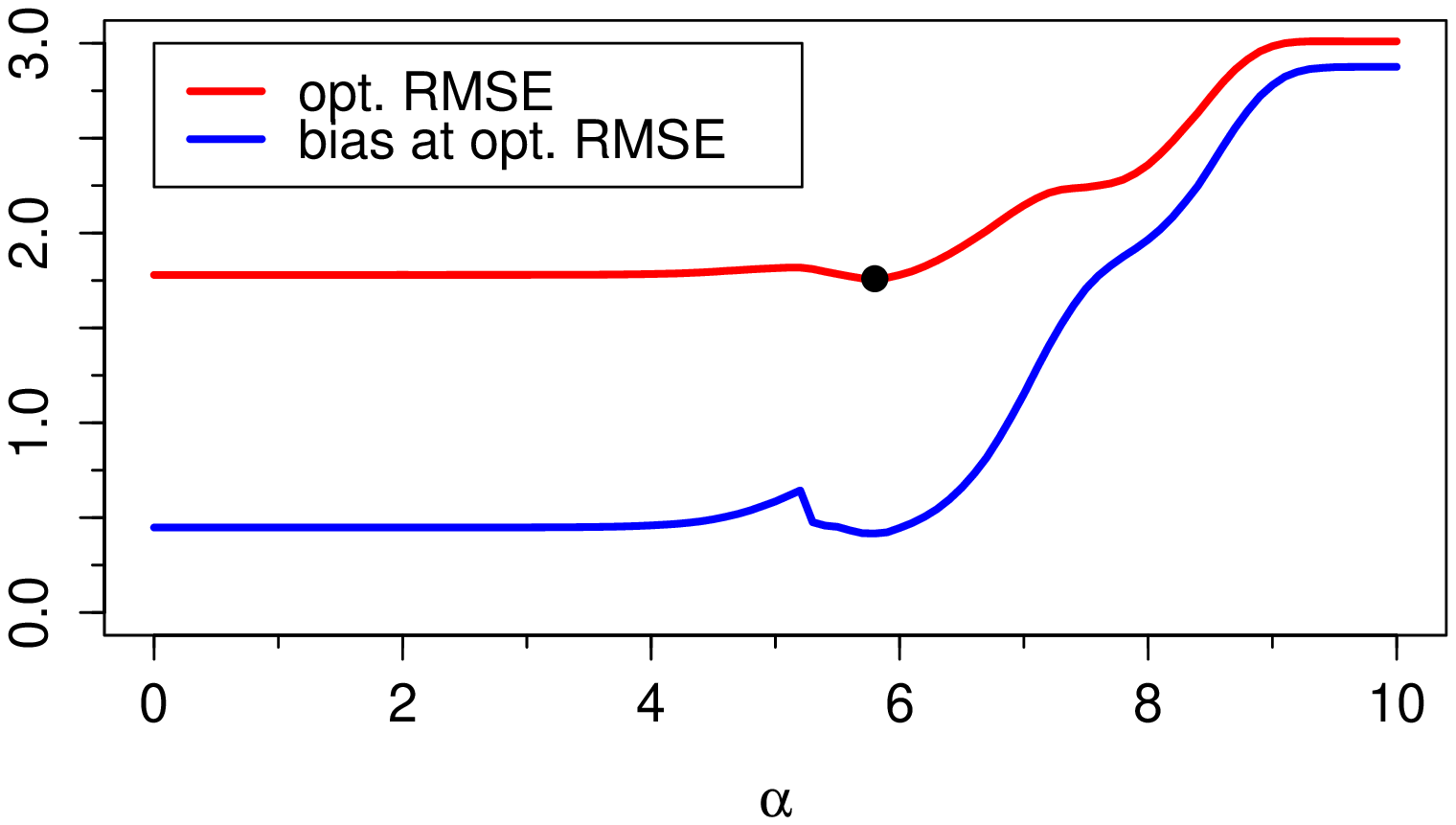}}
  \caption{Optimal RMSE and corresponding bias of (M1).}
  \label{Modell1CWQSalpha}
  \end{center}
\end{figure}

We can see that the curves of the RMSE and the bias are very similar to each other. If $\alpha$ is too small the thresholds $c_{\bm{n}}(\bm{j})$ do not lead to an improvement of the optimal RMSE. However, for $\alpha\in[5.0,6.2]$ and constant weights, and for $\alpha\in[5.6,6.0]$ and QS weights, one has slight improvements of the optimized RMSE. Here the optimal choice for $\alpha$ is $\alpha=5.8$ for both kernels. If $\alpha>6.2$ and $\alpha>6.0$ respectively the optimized RMSE gets worse than without cutting. The jags in the bias arise again due to the fact that we do not fix $\bm{m}$ but allow it to vary over $\mathcal M_2$. A jag occurs when the value of $\bm{m}$ for which the optimal RMSE is attained changes.

We can, however, see more easily the improvement of the estimator $\widehat{\sigma}_{\bm{n},c}^2$ upon $\widehat{\sigma}_{\bm{n}}^2$ when we inspect tables similar to \ref{tab1} and \ref{tab2}, but with $\widehat{\sigma}_{\bm{n}}^2$ replaced by $\widehat{\sigma}_{\bm{n},c}^2$ and a cutting rule of the form (\ref{rule1}) with $\alpha=5.8$. Tables \ref{tab9} and \ref{tab10} show, that the RMSE for both kernels levels off at approximately $1.76$, which is in both cases an improvement of the RMSE of the estimator without cutting rule, though this improvement is higher for the estimator with constant weights. This shows that the proper choice of $\bm{m}$ is now of minor importance than before, since now each choice of $\bm{m}$, that is larger or equal than the largest lag $\bm{j}$ for which the theoretical autocovariances are non-zero, leads to nearly the same RMSE. If we had chosen a different $\alpha$ than the optimal one, but one close to it, we would not have improved the optimal RMSE, but nevertheless the effect for $\bm{m}$ larger than $(2{,}2)$ would have been the same.
\begin{table}[!ht]
\centering
\begin{tabular}{crrrrrr}
\hline
 $\bm{m}$ & (0,0) & (1,1) & (2,2) & (3,3) & (4,4) \\
\hline
Mean($\widehat{\sigma}^2_{\bm{n},c}$) & 1.7217 & 8.6957 & 11.1815 & 11.1815 & 11.1815 \\
RMSE($\widehat{\sigma}^2_{\bm{n},c}$) & 9.8391 & 2.9996 & 1.7597 & 1.7597 & 1.7597  \\
\hline
\end{tabular}
\caption{Simulation results for (M1) for different values of $\bm{m}$ with constant weights, $\alpha=5.8$.}
\label{tab9}
\end{table}
\begin{table}[!ht]
\centering
\begin{tabular}{crrrrrr}
\hline
 $\bm{m}$ & (0,0) & (1,1) & (2,2) & (3,3) & (4,4) & (29,29) \\
\hline
Mean($\widehat{\sigma}^2_{\bm{n},c}$) & 1.7217 & 8.4380 & 10.7599 & 10.8431 & 10.9041 & 11.1572 \\
RMSE($\widehat{\sigma}^2_{\bm{n},c}$) & 9.8391 & 3.2388 & 1.8038 & 1.7867 & 1.7768 & 1.7594  \\
\hline
\end{tabular}
\caption{Simulation results for (M1) for different values of $\bm{m}$ with QS kernel, $\alpha=5.8$.}
\label{tab10}
\end{table}

We can observe a further interesting fact by inspecting Table~\ref{tab11}. We have already obtained the results of the last two columns of Table~\ref{tab11} before. What is new now are the results of the first column. If we take $b_w=0$ and thus have $j_m^{(i)}=j_i/m_i, i=1,2,$ as argument in the QS kernel, the smallest RMSE, that is attained for a random field of size $(30{,}40)$ in model (M1), is $2.4996$ for $\bm{m}=(4{,}5)$. If we now optimize the cutting rule (\ref{rule1}) for this situation as above, we obtain again $\alpha=5.8$ as the optimal value for $\alpha$. This leads to a smallest possible RMSE of $1.7593$ for $\bm{m}=(28{,}28)$. Thus we see that the estimator $\widehat{\sigma}_{\bm{n},c}^2$ with $\alpha=5.8$ and a QS kernel with bandwidth $b_w=0$ leads to a smaller RMSE than the estimator $\widehat{\sigma}_{\bm{n}}^2$ with a QS kernel and bandwidth $b_w=6.4$ or than the estimator $\widehat{\sigma}_{\bm{n}}^2$ with constant weights and to an equally good RMSE if we further use the cutting rule. Similar observations also hold true for larger sample sizes.
\begin{table}[H]
\centering
\begin{tabular}{crrr}
\hline
 kernel & QS, $b_w=0$ & QS, $b_w=6.4$ & CW \\
\hline
RMSE($\widehat{\sigma}^2_{\bm{n}}$)  & 2.4996 & 1.7793 & 1.8478 \\
bias($\widehat{\sigma}^2_{\bm{n}}$)  & -1.5802 & -0.4480  & 0.0324 \\
$\bm{m}$                  & (4,5)  & (2,2)   & (2,2) \\
\hline
RMSE($\widehat{\sigma}^2_{\bm{n},c}$) & 1.7593 & 1.7593 & 1.7597 \\
bias($\widehat{\sigma}^2_{\bm{n},c}$) & -0.4173 & -0.4162  & -0.3785 \\
$\bm{m}$                 & (28,28)  & (22,22)   & (2,2) \\
\hline
impr.(\%)& 29.62 & 1.12 & 4.76 \\\hline
\end{tabular}
\caption{Simulation results of both estimators for (M1) for both kernels and different bandwidths and $\alpha=5.8$.}
\label{tab11}
\end{table}

Finally, we also want to investigate the behaviour of both estimators for both kernels for different sample sizes when we choose different weights for $a_k$. So, in the following let $a_5=1$ and $a_k=a$ for $k\neq 5$. We again fix $\alpha=5.8$ and $b_w=6.4$.
This time we do not only calculate the optimal RMSE (denoted by $\textnormal{RMSE}_{opt}$), the corresponding bias and the value of $\bm{m}$ for which $\textnormal{RMSE}_{opt}$ is attained, but also the quotient $\textnormal{RMSE}_{29}/\textnormal{RMSE}_{opt}$, where $\textnormal{RMSE}_{29}$ is the RMSE of the estimator for $\bm{m}=(29{,}29)$. This is to get an impression of how much the RMSE can vary when we choose $\bm{m}$ too large in the case that we do not use a cutting rule.
Table \ref{tab5} shows the corresponding simulation results.
\begin{table}[!ht]
\centering
\begin{tabular}{crrrrrrr}
\hline
weights $a$                      & 0.01     & 0.1      & 0.3     & 0.5     & 0.7     & 0.9     \\
\hline
$\widehat{\sigma}^2_{\bm{n},c}$                       & 1.1664   & 3.24     & 11.56   & 25      & 43.56   & 67.24   \\\hline
$\textnormal{RMSE}_{opt}$ (CW)                            & 0.1367 & 0.4272 & 1.8478 & 3.9033 & 6.7235 & 10.3083 \\
bias (CW)                             & -0.0015 & -0.3154 & 0.0324 & 0.0681 & 0.1171 & 0.1792 \\
$\bm{m}$ (CW)                              & (1,1)    & (1,1)    &(2,2)    &(2,2)    &(2,2)    &(2,2)   \\
$\textnormal{RMSE}_{29}/\textnormal{RMSE}_{opt}$  & 54.5744 & 42.6213 & 33.1515 & 33.4509 & 33.6324 & 33.7522 \\\hline
$\textnormal{RMSE}_{opt}$ (CW, cut)                      & 0.1367 & 0.4272 & 1.7597 & 3.9650 & 6.8246 & 10.3694 \\
bias (CW, cut)                       & -0.0015 & -0.3154 & -0.3785 & -0.3899 & -0.0901 & 0.0933 \\
$\bm{m}$ (CW, cut)                        & (1,1)    & (1,1)    &(2,2)    &(2,2)    &(2,2)    &(2,2)   \\
$\textnormal{RMSE}_{29}/\textnormal{RMSE}_{opt}$  & 1.1185 & 1.0494 & 1.0000 & 1.0127 & 1.1006 & 1.2141 \\
\hline
$\textnormal{RMSE}_{opt}$ (QS)                             & 0.1324 & 0.4543 & 1.7793 & 3.8008 & 6.5890 & 10.1438 \\
bias (QS)                             & -0.0076 & -0.2483 & -0.4480 & -1.1059 & -2.0494 & -3.2787 \\
$\bm{m}$ (QS)                              & (1,1)    & (1,2)    &(2,2)    &(2,2)    &(2,2)    &(2,2)   \\
$\textnormal{RMSE}_{29}/\textnormal{RMSE}_{opt}$  & 21.6538 & 16.5137 & 14.6394 & 14.7177 & 14.7478 & 14.7626 \\\hline
 $\textnormal{RMSE}_{opt}$ (QS, cut)                      & 0.1324 & 0.4490 & 1.7593 & 3.9650 & 6.7261 & 10.2171 \\
bias (QS, cut)                       & -0.0078 & -0.1991 & -0.4162 & -1.4951 & -2.2255 & -3.3517 \\
$\bm{m}$ (QS, cut)                        & (1,1)    & (29,29)    &(22,22)    &(2,2)    &(2,2)    &(2,2)   \\
$\textnormal{RMSE}_{29}/\textnormal{RMSE}_{opt}$ & 1.1525 & 1.0000 & 1.0000 & 1.0102 & 1.1115 & 1.2253 \\
\hline
\end{tabular}
\caption{Simulation results for (M1) for constant / QS kernel, $\bm{n}=(30{,}40)$, $\alpha=5.8$, and different weights $a$.}
\label{tab5}
\end{table} 

The table shows that both kernels compete reasonably well with each other. The estimator $\widehat{\sigma}_{\bm{n}}^2$ with a QS kernel, for example, ranges from being $6\%$ less efficient to $4\%$ more efficient than the estimator $\widehat{\sigma}_{\bm{n}}^2$ with a constant kernel in the case of a random field of size $(30{,}40)$ with different weights $a$. Similar observations hold true for the other cases. 
In some scenarios the optimized RMSE of $\widehat{\sigma}_{\bm{n},c}^2$ is worse than the one for $\widehat{\sigma}_{\bm{n}}^2$ as we do not vary the value of $\alpha$, but keep it fix as $\alpha=5.8$ which was the value of $\alpha$ adapted to the situation $\bm{n}=(30{,}40)$ with $a_k=0.3$ for $k\neq 5$.
However, the main advantage of the estimator $\widehat{\sigma}_{\bm{n},c}^2$ has again to be seen in the fact that it stabilizes the RMSE for values of $\bm{m}$ larger than $(2{,}2)$. 
This can be seen by the fact that for $\widehat{\sigma}_{\bm{n},c}^2$ the quotient $\textnormal{RMSE}_{29}/\textnormal{RMSE}_{opt}$ is close to one in all cases. 
For example, for $\bm{n}=(30{,}40)$, $a_k=0.7$ for $k\neq 5$, and the constant kernel the optimized RMSE worsens from $6.7235$ for $\widehat{\sigma}_{\bm{n}}^2$ to $6.8246$ for $\widehat{\sigma}_{\bm{n},c}^2$, while the quotient $\textnormal{RMSE}_{29}/\textnormal{RMSE}_{opt}$ improves from $33.6324$ to $1.1006$. So regarding the RMSE we see that for $\widehat{\sigma}_{\bm{n},c}^2$ it does not make a big difference if we choose $\bm{m}$ as $(2{,}2)$ or $(29{,}29)$ or as some value in between, whereas for $\widehat{\sigma}_{\bm{n}}^2$ it does. The same is also true in the case that we use a QS kernel, with the only difference that for $\widehat{\sigma}_{\bm{n}}^2$ the quotient $\textnormal{RMSE}_{29}/\textnormal{RMSE}_{opt}$ equals $14.7478$ which is a lot smaller than $33.6324$ in the case of a constant kernel.
This observation also holds true for all other scenarios, i.e.\ for $\widehat{\sigma}_{\bm{n}}^2$ this quotient is larger if we use the constant kernel instead of the QS kernel.
Naturally, we could further improve the simulation results of the estimators with QS weights if we chose the bandwidth $b_w$ differently in each scenario. Similarly, we could also improve the simulation results of $\widehat{\sigma}_{\bm{n},c}^2$ with an $\alpha$ chosen differently in each scenario.

\subsubsection{Spatial Moving Average Models of Order Three in Two Dimensions}
\label{Sec:ModelM2}
We also investigate the behaviour of the estimators for stronger dependence structures such as a spatial moving average model of order three. The model, (M2), used here is similar to (M1), but now with weights 1, $a_1, a_2$ and $a_3$ for the center pixel and the pixels in the first, second and third ring around it instead of only giving weights to the center pixel and the pixels in the first ring around it. If $H_{i,j}$ is the  $(7\times 7)$-matrix with the column by column entries $\eta_{i-3,j-3},\eta_{i-3,j-2},\eta_{i-3,j},\ldots,\eta_{i,j},\ldots,\eta_{i+3,j+1},\eta_{i+3,j+2},\eta_{i+3,j+3}$
and write $vec(\cdot)$ for the vectorization of a matrix, which transforms a matrix into a vector by writing the columns on top of one another, we can formulate model (M2) as
\begin{align*}
(\textnormal{M2})\quad
\varepsilon_{i,j}=(vec(W))^{\prime}vec\left(H_{i,j}\right).
\end{align*}
Our findings are in agreement with model (M1), in particular the threshold estimator again stabilizes the RMSE for large values of $ \bm m $, and are therefore deferred to the supplement.

\subsubsection{Autoregressive Spatial Moving Average Mixture Models in Three Dimensions}
The last model that we consider is a mixture between an autoregressive model of order one in the time domain and a moving average model of order two in the spatial domain. More specifically we put for $\rho\in(-1,1)$
\begin{align*}
(\textnormal{M4})\quad \varepsilon_{t,i,j}=X_{t,i,j}+v_{t,i,j},\qquad
X_{t,i,j}=\rho X_{t-1,i,j}+u_{t,i,j}
\end{align*}
where 
$u_{t,i,j}\stackrel{i.i.d.}\sim\mathcal N(0,1)$ for all $t,i,j\in\mathbb{Z}$ and the $v_{t,i,j}$ follow for each fixed $t$ model (M1) and are uncorrelated for different values of $t$. Moreover, we suppose that $u_{t_1,i_1,j_1}$ and $v_{t_2,i_2,j_2}$ are uncorrelated for all $t_1,t_2,i_1,i_2,j_1,j_2\in\mathbb{Z}$.

We now calculate the asymptotic variance as follows. Note, that by the one-dimensional theory for AR(1)-models in the time series literature we get for the $X_{t,i,j}$ the representation as a linear process by
\begin{align*}
X_{t,i,j}=\sum_{k=0}^{\infty}\rho^k u_{t-k,i,j}.
\end{align*}
For $(h_t,h_i,h_j)\in\mathbb{Z}^3$ we then obtain
\begin{align*}
\mathbb E\left(\varepsilon_{t,i,j}\varepsilon_{t+h_t,i+h_i,j+h_j}\right)
&=\sum_{k=0}^{\infty}\sum_{l=0}^{\infty}\rho^{k+l}\mathbb E\left(u_{t-k,i,j}u_{t+h_t-l,i+h_i,j+h_j}\right)\\&
+\mathbb E\left(v_{t,i,j}v_{t+h_t,i+h_i,j+h_j}\right)
+\sum_{k=0}^{\infty}\rho^k \mathbb E\left(u_{t-k,i,j}v_{t+h_t,i+h_i,j+h_j}\right)\\&
+\sum_{l=0}^{\infty}\rho^l \mathbb E\left(u_{t+h_t-l,i+h_i,j+h_j}v_{t,i,j}\right)
\eqqcolon E_1+E_2+E_3+E_4.
\end{align*}
$E_3$ and $E_4$ equal zero as $u_{t_1,i_1,j_1}$ and $v_{t_2,i_2,j_2}$ are uncorrelated by assumption.
$E_1$ is only non-zero if $h_i=h_j=0$. In this case we get
\[
E_1=\sum_{k=0}^{\infty}\rho^{2k+h_t}=\frac{\rho^{h_t}}{1-\rho^2}.
\]
$E_2$, on the contrary, is only non-zero if $h_t=0$ and it then equals the autocovariances of model (M1).
We thus obtain
\begin{align*}
\sigma^2=\sum_{(h_t,h_i,h_j)\in\mathbb{Z}^3}\mathbb E\left(\varepsilon_{t,i,j}\varepsilon_{t+h_t,i+h_i,j+h_j}\right)
=\frac{1}{(1-\rho)^2}+\sum_{i,j=1}^9a_ia_j,
\end{align*}
which corresponds to the sum of the variance caused by the AR(1)-model in the time domain and the asymptotic variance of (M1).
In our first simulation setting for (M4) we take $\rho=0.2, a_5=1$ and $a_i=0.3$ for $i\neq 5$ and consequently have $\sigma^2=1.5625+11.56=13.1225$.
Now, we first consider a random field of size $(20{,}30{,}40)$ where the first component corresponds to the time domain, and investigate the estimator in (\ref{sigmaquadrat}) for several values of $\bm{m}=(m_1,m_2,m_3)\in\mathcal M_3\cup\{(1,2,2)\}$ with $\mathcal M_3$ defined as 
\begin{align}\label{setm3}
\mathcal M_3&=\{(m_1,m_2,m_3)\in\{0,\ldots,15\}^3:m_1=m_2=m_3\}.
\end{align}
Note that this time we include the triple $(1{,}2{,}2)$, as this is the largest lag with a significant contribution to the asymptotic variance of model (M4).

Table~\ref{tab24} shows the corresponding simulation results for the constant kernel using 10000 repetitions.
\begin{table}[!ht]
\centering
\begin{tabular}{crrrrrrrrrrrrrrrr}
\hline
$\bm{m}$ & (0,0,0) & (1,1,1) & (1,2,2)  & (2,2,2) & (3,3,3) & (4,4,4) \\
\hline
Mean($\widehat{\sigma}^2_{\bm{n}}$) & 2.7079 & 10.0659 & 12.9475 & 13.0239 & 13.0259 & 12.9901 \\
RMSE($\widehat{\sigma}^2_{\bm{n}}$) & 10.4146 & 3.0834 & \textcolor{red}{0.8432} & 1.0995 & 2.0106 & 3.1359 \\
\hline
\end{tabular}
\vspace{0.5cm}

\centering
\begin{tabular}{crrrrrrrrrrrrrrrr}
\hline
$\bm{m}$ & (5,5,5) & (8,8,8) & (10,10,10) & (12,12,12) & (15,15,15) \\
\hline
Mean($\widehat{\sigma}^2_{\bm{n}}$) & 12.9906 & 12.9402 & 12.8049 & 12.7995 & 12.7426 \\
RMSE($\widehat{\sigma}^2_{\bm{n}}$) & 4.4694 & 9.9540 & 15.0064 & 21.3540 & 34.7365 \\
\hline
\end{tabular}
\caption{Simulation results for (M4) for different values of $\bm{m}$ with constant kernel.}
\label{tab24}
\end{table}

We can see that the smallest RMSE is attained for $\bm{m}=(1{,}2{,}2)$ which corresponds in the spatial domain to the largest lag with non-zero autocovariances and in the time domain to the largest lag with a significant contribution to the asymptotic variance as the autoregressive model is of order one. Greater values of $\bm{m}$ lead to a rapid increase of the RMSE while the bias stays stable.

In the case of QS weights we first have to determine the bandwidth parameter $b_w$. This time we allowed $b_w$ to vary from $0$ to $40$, but for these values no minimum is attained (except on the boundary for $b_w=40$). On the contrary, the RMSE decreases slowly when $b_w$ increases. As, however, for $b_w\geq 20$ the RMSE decreases so slowly that visually it is constant, we take $b_w=20$ from now on. Note that in the previous three simulation models a minimal RMSE with respect to $b_w$ was always attained, but in all these cases the RMSE was also nearly constant around the minimum. Table~\ref{tab25-1} shows the corresponding simulation results.
\begin{table}[!ht]
\centering
\begin{tabular}{crrrrrrrrrrrrrrrrr}
\hline
$\bm{m}$ & (0,0,0) & (1,1,1) & (1,2,2) & (2,2,2) & (3,3,3) & (4,4,4) \\
\hline
Mean($\widehat{\sigma}^2_{\bm{n}}$) & 2.7079 & 10.0322 & 12.8747 & 12.9503 & 12.9588 & 12.9309  \\
RMSE($\widehat{\sigma}^2_{\bm{n}}$) & 10.4146 & 3.1165 & \textcolor{red}{0.8515} & 1.0910 & 1.9561 & 2.9977  \\
\hline
\end{tabular}
\vspace{0.5cm}

\centering
\begin{tabular}{crrrrrrrrrrrrrrrrr}
\hline
$\bm{m}$ & (5,5,5) & (8,8,8) & (10,10,10) & (12,12,12) & (15,15,15)  \\
\hline
Mean($\widehat{\sigma}^2_{\bm{n}}$) & 12.9362 & 12.9115 & 12.8108 & 12.8094 & 12.7996  \\
RMSE($\widehat{\sigma}^2_{\bm{n}}$) & 4.1905 & 8.7241 & 12.5226 & 16.9393 & 25.3493  \\
\hline
\end{tabular}
\caption{Simulation results for (M4) for different values of $\bm{m}$ with QS kernel.}
\label{tab25-1}
\end{table}

We see that also for the QS kernel the smallest RMSE is achieved for $\bm{m}=(1{,}2{,}2)$ which is now slightly larger than for the constant kernel. However, if we chose a larger value for $b_w$ we could probably get a smaller RMSE as in the previous simulation settings.

Next, we want to investigate how the improved variance estimator $\widehat{\sigma}^2_{\bm{n},c}$ with the cutting rule 
\begin{align}\label{rule3}
	c_{\bm{n}}(\bm{j})=\frac{\left(\sqrt{j_1^2+j_2^2+j_3^2}\right)^{\alpha}}{n_1n_2n_3}-\delta
	\end{align}
	with $\alpha\geq 0$ and $\delta=0.0001$,
behaves in this model.
If we optimize again the RMSE with respect to $\alpha$, we obtain for the constant as well as for the QS weights an optimal value of $\alpha$ as $\alpha=9.4$. Due to very high computational costs we allowed $\bm{m}$ only to vary up to values of $\mathcal M_3$ smaller than $\bm{m}=(7{,}7{,}7)$ which does not effect the results when using constant weights, but does not lead to the smallest possible RMSE when using QS weights. The corresponding results are gathered in Tables \ref{tab26} and~\ref{tab27}.
\begin{table}[!ht]
\centering
\begin{tabular}{crrrrrrrrrrrrrrrrr}
\hline
$\bm{m}$ & (0,0,0) & (1,1,1) & (1,2,2) & (2,2,2) & (3,3,3)  \\
\hline
Mean($\widehat{\sigma}^2_{\bm{n},c}$) & 2.7079 & 10.0659 & 12.5863 & 12.6569 & 12.6569 \\
RMSE($\widehat{\sigma}^2_{\bm{n},c}$) & 10.4146 & 3.0832 & 0.7600 & 0.7169 & 0.7169 \\
\hline
\end{tabular}
\caption{Simulation results for (M4) for different values of $\bm{m}$ with constant kernel, $\alpha=9.4$.}
\label{tab26}
\end{table}
\begin{table}[!ht]
\centering
\begin{tabular}{crrrrrrrrrrrrrrrrr}
\hline
$\bm{m}$ & (0,0,0) & (1,1,1) & (1,2,2) & (2,2,2) & (3,3,3) & (15,15,15) \\
\hline
Mean($\widehat{\sigma}^2_{\bm{n},c}$) & 2.7079 & 10.0322 & 12.5219 & 12.5917 & 12.5972 & 12.6310 \\
RMSE($\widehat{\sigma}^2_{\bm{n},c}$) & 10.4146 & 3.1163 & 0.8040 & 0.7578 & 0.7542 & 0.7327 \\
\hline
\end{tabular}
\caption{Simulation results for (M4) for different values of $\bm{m}$ with QS kernel, $\alpha=9.4$.}
\label{tab27}
\end{table}

We see that for both weighting schemes we again have the stabilization property of the RMSE for values of $\bm{m}$ larger than $\bm{m}=(2{,}2{,}2)$. For the constant weights the RMSE levels off at approximately 0.71 and for the QS weights between 0.73 and 0.76. But what is even more appealing now is that we can actually improve the optimized RMSE by almost $15\%$ in the case of constant weights and by nearly $14\%$ in the case of QS weights. The reason for this is that in this model the autocovariances for $\bm{m}$ larger than $(1{,}2{,}2)$ are close to zero, but not identically zero as in the previous simulation settings.

Finally, we want to investigate if we still have an improvement of the optimized RMSE if we take the cutting rule with the optimal $\alpha=9.4$ in cases where we change the weights of the autoregressive parameter $\rho$ in model (M4). All other parameters are chosen as before.

Table \ref{tab28} shows that this is indeed the case. For $\rho=0.4$, for example, we have an improvement of around $17\%$ for both kernels. If we optimized $\alpha$ to this situation we would probably obtain an even higher improvement. Also the other values for $\rho$ lead to an improvement of the optimized RMSE of nearly or even more than $10\%$.
\begin{table}[!ht]
\centering
\begin{tabular}{crrrrrrr}
\hline
weights $\rho$                      & 0.01     & 0.1      & 0.2     & 0.3     & 0.4     & 0.5     \\
\hline
$\sigma^2$                       & 12.5803   & 12.7946     & 13.1225   & 13.6008      & 14.3378   & 15.56   \\\hline
$\textnormal{RMSE}_{opt}$ (CW)    & 0.8019 & 0.8143 & 0.8432 & 0.9233 & 1.1546 & 1.5319 \\
bias (CW)                             & -0.0449 & -0.0779 & -0.1750 & -0.3749 & -0.7595 & -0.8961 \\
$\bm{m}$ (CW)                              & (1,2,2)    & (1,2,2)    &(1,2,2)    &(1,2,2)    &(1,2,2)    &(2,2,2)   \\\hline
$\textnormal{RMSE}_{opt}$ (CW, cut)                      & 0.6625 & 0.6821 & 0.7169 & 0.7837 & 0.9573 & 1.3909 \\
bias (CW, cut)                       & -0.4065 & -0.4254 & -0.4656 & -0.5506 & -0.7644 & -1.2529 \\
$\bm{m}$ (CW, cut)                        & (1,2,2)    & (2,2,2)    &(2,2,2)    &(2,2,2)    &(2,2,2)    &(2,2,2)   \\\hline
impr. (\%)    & 17.38     & 16.24    & 14.98   & 15.12   & 17.09   & 9.21   \\
\hline
$\textnormal{RMSE}_{opt}$ (QS)    & 0.7993 & 0.8145 & 0.8515 & 0.9463 & 1.1971 & 1.5658 \\
bias (QS)                             & -0.1166 & -0.1501 & -0.2478 & -0.4485 & -0.8339 & -0.9784 \\
$\bm{m}$ (QS)                              & (1,2,2)    & (1,2,2)    &(1,2,2)    &(1,2,2)    &(1,2,2)    &(2,2,2)   \\\hline
$\textnormal{RMSE}_{opt}$ (QS, cut)                      & 0.6894 & 0.7073 & 0.7437 & 0.8137 & 0.9931 & 1.4341 \\
bias (QS, cut)                       & -0.4489 & -0.4679 & -0.5090 & -0.5953 & -0.8109 & -1.3021 \\
$\bm{m}$ (QS, cut)                        & (7,7,7)    & (7,7,7)    &(7,7,7)    &(7,7,7)    &(7,7,7)    &(7,7,7)   \\\hline
impr. (\%)    & 13.75     & 13.16    & 12.66   & 14.01   & 17.04   & 8.41   \\
\hline
\end{tabular}
\caption{Simulation results for (M4) for constant / QS kernel, $\alpha=9.4$ and different weights $\rho$.}
\label{tab28}
\end{table}

\subsection{Data-adaptive variance estimation using subsampling}\label{Sec82}
	In the previous section we have performed an extensive simulation study about the behaviour of the RMSE of the variance estimators (\ref{sigmaquadrat}) and (\ref{improvedvarest}). We have seen that dependent on the dependence structure of the underlying random fields it is quite useful to consider the improved variance estimator in (\ref{improvedvarest}) instead of the one in (\ref{sigmaquadrat}), as the former is very robust with respect to the proper choice of $\bm{m}$ and can also lead to improvements of the optimized RMSE up to more than $15\%$ in some situations.

In the following simulation study we want to check how well the data-adaptive subsampling procedure proposed in Section~\ref{Sec:DataAdaptive}  works. For this puropose, we consider model (M1) and choose $\bm{b}=\left\lfloor \bm{n}^{\gamma}\right\rfloor=(\left\lfloor n_1^{\gamma}\right\rfloor,\left\lfloor n_2^{\gamma}\right\rfloor)$ with $\gamma\in\{0.7,0.8,0.9\}$ as well as $\bm{h}=\bm{1}$, so that we consider all possible subrandom fields of size $\bm{b}$. Moreover, we do not only investigate the subsampling approximation of the RMSE, but also the one for the mean which is defined as
\begin{align}\label{Meansub}
\widehat{\textnormal{Mean}}\left(\widehat{\sigma}^2_{\bm{n}}\right)\coloneqq |\bm N|^{-1}\sum_{i_1=1}^{N_1}\sum_{i_2=1}^{N_2}\ldots\sum_{i_q=1}^{N_q}\widehat{\sigma}^2_{\bm{n},\bm{b},\bm{i}}.
\end{align}
The general simulation setting of model (M1) stays the same as in Subsection~\ref{sub811}; in particular we consider a random field of size $(30{,}40)$ leading to subrandom fields of size $(10{,}13)$, $(15{,}19)$ and $(21{,}27)$ for $\gamma=0.7, 0.8$ and $0.9$, respectively. Table~\ref{M1sub} shows the simulation results for different values of $\bm{m}$ and 10000 repetitions. We can see that the subsampling approximation of the mean works very well for all values of $\gamma$ and $\bm{m}$. Regarding the subsampling approximation of the RMSE, we see that this is most adequate if we choose $\gamma=0.9$. Therefore, we fix this value from now on.

As our main goal is, however, to find a good estimate for the parameter $\alpha$ used in the cutting rules of the improved variance estimator (\ref{improvedvarest}), we now want to investigate how well we can imitate the graph of the RMSE in Figure~\ref{Modell1CWQSalpha} by the subsampling approximation. We compute the (optimal) subsampling approximation $\widehat{\textnormal{RMSE}}\left(\widehat{\sigma}^2_{\bm{n},c}\right)$ of the RMSE of $\widehat{\sigma}^2_{\bm{n},c}$ with respect to $\bm{m}$ (i.e.\ when $\bm{m}$ ranges over the values of $\mathcal M_2$ in (\ref{setm2})) analogously to the one of $\widehat{\sigma}^2_{\bm{n}}$ in (\ref{RMSEsub}), whereby we now have to replace $c_{\bm{n}}(\bm{j})$ by $c_{\bm{b}}(\bm{j})$.
\begin{table}[h!]
\centering
\begin{tabular}{crrrrrrrr}
\hline
 $\bm{m}$ & (0,0) & (1,1) & (3,3) & (4,4) & (6,6) & (7,7) \\
\hline
Mean($\widehat{\sigma}^2_{\bm{n}}$) & 1.7217 & 8.6957 & 11.5879 & 11.5588 & 11.5233 & 11.5297 \\
RMSE($\widehat{\sigma}^2_{\bm{n}}$) & 9.8391 & 2.9996 & 2.8328 & 3.8371 & 5.9867 & 7.1408  \\\hline
Mean$_{Sub}$($\widehat{\sigma}^2_{\bm{n}}$), $\gamma=0.7$ & 1.7232 & 8.7022 & 11.5825 & 11.5530 & 11.5377 & 11.5527 \\
RMSE$_{Sub}$($\widehat{\sigma}^2_{\bm{n}}$), $\gamma=0.7$ & 7.2646 & 2.6152 & 8.7550 & 12.1514 & 20.6803 & 26.2114 \\\hline
Mean$_{Sub}$($\widehat{\sigma}^2_{\bm{n}}$), $\gamma=0.8$ & 1.7231 & 8.7012 & 11.5781 & 11.5554 & 11.5585 & 11.5674 \\
RMSE$_{Sub}$($\widehat{\sigma}^2_{\bm{n}}$), $\gamma=0.8$ & 9.0152 & 2.7599 & 4.9774 & 7.0740 & 11.8211 & 14.4589 \\\hline
Mean$_{Sub}$($\widehat{\sigma}^2_{\bm{n}}$), $\gamma=0.9$ & 1.7232 & 8.7021 & 11.5810 & 11.5564 & 11.5538 & 11.5565 \\
RMSE$_{Sub}$($\widehat{\sigma}^2_{\bm{n}}$), $\gamma=0.9$ & 9.9136 & 3.5601 & 2.9771 & 3.8902 & 6.7117 & 8.3712 \\
\hline
\end{tabular}
\caption{Subsampling approximation of the mean and the RMSE in model (M1)}
\label{M1sub}
\end{table}

Figure~\ref{Modell1CWSub} shows the optimal and subsampled RMSE in model (M1) for random fields of size $(30{,}40)$ and $(45{,}60)$ using constant weights.
\begin{figure}[ht]
  \begin{center} 
  \includegraphics[width=60mm]{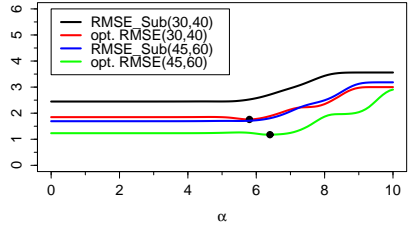}
  \caption{Optimal and subsampled RMSE in model (M1) for random fields of size $(30{,}40)$ and $(45{,}60)$ using constant weights.}
  \label{Modell1CWSub}
  \end{center}
\end{figure}

We can see that the shape of the curves of the subsampling approximation of the RMSE coincides quite well with the shape of the optimal curves of the RMSE. Moreover, the difference between the approximated and the optimal curves becomes smaller for larger sample sizes. The only problem is that the curves of the approximation attain their minimum for $\alpha=0$; however, this is not very suprising as the minimum of the curves of the optimal RMSE is not very marked. To deal with such situations, we choose $\alpha$ as the largest value for which the corresponding subsampled RMSE differs less than $1\%$, say, from the subsampled RMSE for $\alpha=0$. In the considered situation this leads to $\alpha=5.4$ compared to the optimal value of $\alpha=5.8$ for a random field of size $(30{,}40)$ and to $\alpha=5.7$ compared to the optimal value of $\alpha=6.4$ for a random field of size $(45{,}60)$. For a tolerance of $3\%$ one would obtain $\alpha=5.7$ for the smaller and $\alpha=6.1$ for the larger random field. This shows that the proposed method leads to reasonable estimates for $\alpha$ if one chooses an appropriate tolerance level. Even if the optimal and the estimated values for $\alpha$ do not match exactly, they are still close enough to each other to guarantee good results and to profit from the advantages that one achieves when using $\widehat{\sigma}^2_{\bm{n},c}$ instead of $\widehat{\sigma}^2_{\bm{n}}$.

\subsection{Accurary for Testing Image Data}
\label{SimsH0Test}

Lastly, we analyzed the accuracy of the proposed estimator when used to standardize the image test statistic based on the partial sums $ S_{\bm n} $, in order to test for the presence of deviations from a null or reference model for the image, as introduced and discussed in the introduction. Concretely, we investigated the test which rejects the null hypothesis $ H_0 $, if
\begin{equation}
\label{H0Test}
  | \bm n |^{-1/2} | S_{\bm n} | > \widehat{\sigma}_{\bm n,c}  \Phi^{-1}(1 - \alpha/2).
\end{equation}
Observe that we use the threshold cut-off estimator with constant weights to standardize the partial sum. We used the threshold rule $ c_{\bm n}(\alpha) $ investigated above in detail and investigated how one should select $ \alpha$ to obtain accurate tests in terms of the type I error rate. 

We simulated image data from spatial moving average models of order $d$ given by
\[
\xi_{i_1,i_2} = \sum_{j_1=-d}^{d} \sum_{j_2=-d}^{d} \theta_{j_1,j_2} \epsilon_{i_1+j_1,i_2+j_2} + \epsilon_{i_1,i_2},
\qquad 1 \le i_1 \le n_1, 1 \le i_2 \le n_2,
\]
where $ \epsilon_{ij} $ are i.i.d. $ N(0,1) $. The first model is model (M2), i.e. a spatial moving average model of order $d=3$ with weights $ a_1 = 0.5 $, $ a_2 = 0.3 $ and $ a_3 = 0.1 $, cf. Section~\ref{Sec:ModelM2}. The second model allows for much stronger dependencies and is of order $ d = 40 $ with coefficients given by 
\[ 
(M5) \qquad   \theta_{j_1,j_2} = \rho^{\sqrt{j_1^2+j_2^2}},
\] 
for $ | \rho | < 1 $. The degree was chosen as $ 40 $, in order to approximate a spatial linear process with $d=\infty$ and those coefficients, and nevertheless keeping the computational costs at a reasonable level. Depending on the parameter $ \rho$ the correlations can be substantial.

Table~\ref{SimH0Test} provides the results for random fields following those models with sizes ranging between $ n_1 = n_2 = 50 $  and $ n_1=n_2=250 $. For the second model the parameter $ \rho $ was chosen as $ 0.1 $, $ 0.3 $ and $ 0.5 $, in order to evaluate the procedure for weak, intermediate and stronger dependence structures. Indeed, visual inspection of random fields with $ \rho = 0.5 $ shows that the covariances induce an interesting texture-like random pattern which comes close to those observed in certain inhomogenous materials. For each case the type I error rate was simulated based on 5,000 simulation runs.

 Compared to the results addressing the estimation accurarcy, where values of $ \alpha $ around $ 5.8$  led to optimal RMSE values, it can be seen that smaller values around $ 3.6 $ are preferable, in order to obtain accurate type I error rates for the settings studied in this simulation, whatever the degree of correlation of the random field. This is, however, in good agreement with the findings of the previous simulations, especially the RMSE and bias curves shown in Figure~\ref{Modell1CWQSalpha}. Indeed, according to those results for values around $ 3.6 $ the variance is slightly larger (leading to the slightly higher RMSE values), but the bias is smaller. Our simulations reveal the interesting observation that for the accuracy in terms of the type I error rate it seems to be better to have a smaller bias. 
 
\begin{table}[h!]
	\centering
	\begin{tabular}{cccrrrrrrrrr}
		\hline
	&	& \multicolumn{10}{c}{$\alpha$} \\
$n$ & $ \rho $ & 3 & 3.1 & 3.2 & 3.3 & 3.4 & 3.5 & 3.6 & 3.7 & 3.8 & 3.9 \\ \hline
\multicolumn{2}{l}{Model (M2)} \\
$50$ &
$-$ &0.0622 & 0.06 & 0.0592 & 0.0518 & 0.0558 & 0.0558 & 0.0478 & 0.0506 & 0.0518 & 0.0474   \\
$100$ &
$-$ & 0.0618 & 0.0598 & 0.0564 & 0.0504 & 0.0574 & 0.0538 & 0.0518 & 0.0456 & 0.0502 & 0.0458 \\
$250$ &
$-$ & 0.0594 & 0.0524 & 0.0496 & 0.0542 & 0.0466 & 0.053 & 0.0574 & 0.051 & 0.05 & 0.0498 \\ 
\multicolumn{2}{l}{Model (M5)} \\
$ 50 $ &
0.1 & 0.0574 & 0.0586 & 0.0588 & 0.061 & 0.055 & 0.0528 & 0.046 & 0.0556 & 0.0558 & 0.0558 \\
&0.3 & 0.0622 & 0.0632 & 0.0566 & 0.0548 & 0.062 & 0.0578 & 0.0624 & 0.0562 & 0.0602 & 0.0632 \\
&0.5 & 0.0914 & 0.085 & 0.078 & 0.0784 & 0.0812 & 0.0782 & 0.0768 & 0.0768 & 0.0802 & 0.081 \\
100 & 
0.1 & 0.0498 & 0.0534 & 0.0502 & 0.0492 & 0.0504 & 0.0468 & 0.0466 & 0.0502 & 0.0468 & 0.048 \\
& 0.3 & 0.054 & 0.0502 & 0.0572 & 0.0496 & 0.0542 & 0.0452 & 0.0496 & 0.0532 & 0.0508 & 0.0582 \\
& 0.5 & 0.0624 & 0.06 & 0.0642 & 0.0628 & 0.062 & 0.0538 & 0.0574 & 0.061 & 0.0652 & 0.0654 \\ 
250 &
0.1 & 0.0516 & 0.0496 & 0.0524 & 0.0412 & 0.0502 & 0.0522 & 0.0554 & 0.056 & 0.0526 & 0.0516 \\
& 0.3 & 0.046 & 0.054 & 0.0462 & 0.0496 & 0.046 & 0.05 & 0.0572 & 0.0472 & 0.049 & 0.048 \\
& 0.5 & 0.0562 & 0.0538 & 0.0508 & 0.052 & 0.0534 & 0.0546 & 0.053 & 0.0566 & 0.0536 & 0.0524 \\
\hline
\end{tabular}
\caption{Simualated type I error rates of the test (\ref{H0Test}) for different parameter values $ \alpha$ for the threshold variance estimator. The test is applied to images of size $n \times n $ given by a SMA(40,40) model with covariance parameter $ \rho $}
\label{SimH0Test}
\end{table}

\subsection{Concluding Remarks}
\label{concludingremarks}
In all the settings of Subsection~\ref{SimVE} we presented results for positive values of the MA- and AR-parameters. However, in our simulation studies we also considered negative values and mixtures of positive and negative values for these parameters. As the results are qualitatively the same as for only positive weights we do not present them here.

Moreover, the question arises whether cutting rules different from those in (\ref{rule1}) and (\ref{rule3}) respectively, would lead to similar or even better results regarding the RMSE of the improved variance estimator. One could for example also think of a cutting rule of the form 
\begin{align}\label{rule2}
c_{\bm{n}}(\bm{j})=\frac{\left(\max\{|j_1|,|j_2|,|j_3|\}\right)^{\alpha}}{n_1n_2n_3}-\delta
\end{align}
or even of using a constant cutting rule, i.e.\ a rule that does not depend on the lag $\bm{j}$ and the sample size $\bm{n}$. However, the simulation studies showed that a constant cutting rule in general cannot improve the RMSE as much as a cutting rule depending on $\bm{j}$ and $\bm{n}$. Moreover, rules as in (\ref{rule2}) or of similar forms lead to comparable but not better results as the cutting rules in (\ref{rule1}) and (\ref{rule3}) concerning the optimized RMSE. That is why we only focused our attention for the analysis of $\widehat{\sigma}^2_{\bm{n},c}$ on sequences $c_{\bm{n}}(\bm{j})$ as in (\ref{rule1}) and (\ref{rule3}).

Lastly, we may conclude that the proposed estimators work well in various models of practical importance and lead to highly accurate results, both in terms of estimation accuracy and in terms of the type I error rates when considering significance testing for image data, when the parameters are selected appropriately. The latter can be achieved in a data-adaptive way by the proposed subsampling-based selection procedure.

\section{Proofs}
\label{Sec:Proofs}

This section is devoted to rigorous proofs of the presented theorems. The consistency proofs for the case $ p = 1 $, except Theorem~\ref{unknownrefsignal}, are part of the first author's Ph.D. thesis, see \cite{Prause}. The extensions to multivariate fields ($ p > 1 $) as well as Theorems~\ref{CLT}, \ref{CLTmultiv}, \ref{CLTmultiv2} and \ref{Subsampling} are newer and due to the second author.

\subsection{Proofs of Section~\ref{EstAsyVar}}

Frequently we shall use the following result, see \cite{Bil1968}: Fix $ 1 \le j \le q $ and put $ \mathcal{F}_{a}^b = \sigma( \xi_{\bm i} :  a \le i_j \le b ) $. Then $ \xi_{\bm 0} $ is $ \mathcal{F}_{-\infty}^0 $-measurable and $ \xi_{\bm \ell} $ is $ \mathcal{F}_{\ell_\star}^\infty $-measurable, where $ \ell_\star = \min_j \ell_j $. It follows that
\[
  | \mathbb E( \xi_{\bm 0} \xi_{\bm \ell} ) - \mathbb E( \xi_{\bm 0}  ) \mathbb E(\xi_{\bm \ell}) | \le 
2 \varphi^{1/2}_j(\ell_\star) \sqrt{ \mathbb E \xi_{\bm 0}^2 }  \sqrt{ \mathbb E \xi_{\bm \ell}^2 }
\le  2 \varphi^{1/2}(\ell_\star) \sqrt{ \mathbb E \xi_{\bm 0}^2 }  \sqrt{ \mathbb E \xi_{\bm \ell}^2 },
\]
by definition of $ \varphi $. For the proof of Theorem~\ref{consistencyvariance} we need the following lemma.
 

\begin{lemma}\label{propertiesY}
Let $\left\{\xi_{\bm{i}},i\in\Gamma_{\bm{n}}\right\}$ with $ \xi_{\bm i} = ( \xi_{\bm i}^{(1)}, \dots, \xi_{\bm i}^{(p)} ) $, 
 be a strictly stationary, $\varphi$-mixing ($\rho^*$-mixing) random field taking values in $ \R^p $ with $ \max_{1 \le j \le p} \mathbb E\left(\xi_{\bm{0}}^{(j)}\right)^4<\infty$. Fix $ 1 \le \nu, \mu \le p $. Then the random field $\left\{Y_{\bm{i}}^{(\nu,\mu)}(\bm{j}),\bm i\in\widetilde{\Gamma}_{\bm{n}}(\bm{j})\right\}$, defined by
\begin{align}\label{Yij}
Y_{\bm{i}}^{(\nu,\mu)}(\bm{j})\coloneqq\xi_{\bm{i}}^{(\nu)}\xi_{\bm{i}+\bm{j}}^{(\mu)}-\mathbb E\left(\xi_{\bm{0}}^{(\nu)} \xi_{\bm{j}}^{(\mu)} \right),
\end{align}
is as well a strictly stationary, $\varphi$-mixing ($\rho^*$-mixing) random field with $\mathbb E\left(Y_{\bm{0}}^{(\nu,\mu)}(\bm{j})\right)=0$ and $\mathbb E\left(Y_{\bm{0}}^{(\nu,\mu)}(\bm{j})\right)^2<\infty$. The mixing coefficients satisfy
\begin{equation}
\label{PropMixYij}
\varphi_{Y(j;\nu, \mu)}(i;r)\leq\varphi_{\xi}(i;r-2j^{\star})\quad\textnormal{for}\quad r>2j^{\star}  
\end{equation}
Fix $\bm{j}\in\mathbb Z^q$ and $ \nu, \mu \in \{1, \dots, p \} $. If (\ref{condphimix}) holds, i.e. $\sum_{r=1}^{\infty}r^{q-1}\varphi^{\frac{1}{2}}(r)<\infty$, then we have
\begin{align}\label{Ysummix}
\sum_{r=1}^{\infty}r^{q-1}\varphi_{Y(\bm{j};\nu,\mu)}^{\frac{1}{2}}(r)<\infty,
\end{align}
where $\varphi_{Y(\bm j;\nu, \mu)}$ denote the $\varphi$-mixing coefficients of the random field $\left\{Y_{\bm{i}}^{(\nu,\mu)}(\bm{j}),i\in\widetilde{\Gamma}_{\bm{n}}(\bm{j})\right\}$.
\end{lemma}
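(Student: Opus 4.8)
The plan is to verify the asserted properties of $\{Y_{\bm i}^{(\nu,\mu)}(\bm j)\}$ one at a time, the only substantial point being the transfer of the mixing rate from $\{\xi_{\bm i}\}$. First, $c\coloneqq\mathbb E(\xi_{\bm 0}^{(\nu)}\xi_{\bm j}^{(\mu)})$ is finite by the Cauchy--Schwarz inequality together with the hypothesis $\max_{1\le j\le p}\mathbb E(\xi_{\bm 0}^{(j)})^4<\infty$, so $Y_{\bm i}^{(\nu,\mu)}(\bm j)$ is well defined. Strict stationarity is inherited: $Y_{\bm i}^{(\nu,\mu)}(\bm j)$ arises from $\{\xi_{\bm k}\}$ by applying, after the coordinate shift by $\bm i$, the fixed measurable map $x\mapsto x_{\bm 0}^{(\nu)}x_{\bm j}^{(\mu)}-c$, so the finite-dimensional laws of $\{Y_{\bm i}^{(\nu,\mu)}(\bm j)\}$ are translation invariant because those of $\{\xi_{\bm k}\}$ are. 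The centering gives $\mathbb E(Y_{\bm 0}^{(\nu,\mu)}(\bm j))=0$, and $\mathbb E(Y_{\bm 0}^{(\nu,\mu)}(\bm j))^2\le 2\mathbb E(\xi_{\bm 0}^{(\nu)}\xi_{\bm j}^{(\mu)})^2+2c^2\le 4\sqrt{\mathbb E(\xi_{\bm 0}^{(\nu)})^4\,\mathbb E(\xi_{\bm j}^{(\mu)})^4}<\infty$ by Cauchy--Schwarz again.

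The heart of the argument is the mixing bound (\ref{PropMixYij}). Writing $j^\star=\max_{1\le i\le q}|j_i|$ and fixing a direction $i$, the key remark is that $Y_{\bm n}^{(\nu,\mu)}(\bm j)$ is $\sigma(\xi_{\bm n},\xi_{\bm n+\bm j})$-measurable and the $i$-th coordinates of the indices $\bm n$ and $\bm n+\bm j$ both lie in $[\,n_i-j^\star,\ n_i+j^\star\,]$. Hence, if $n_i\ge r$ then $Y_{\bm n}^{(\nu,\mu)}(\bm j)$ is $\mathcal A^+(i;r-j^\star)$-measurable, and if $n_i\le 0$ it is $\mathcal A^-(i;j^\star)$-measurable; cf.\ (\ref{DefA+})--(\ref{DefA-}). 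Therefore
\[
  \mathcal A^+_{Y(\bm j;\nu,\mu)}(i;r)\subseteq\mathcal A^+(i;r-j^\star),\qquad
  \mathcal A^-_{Y(\bm j;\nu,\mu)}(i;0)\subseteq\mathcal A^-(i;j^\star).
\]
Plugging these inclusions into the definition (\ref{phijr}) bounds $\varphi_{Y(\bm j;\nu,\mu)}(i;r)$ by the $\varphi$-mixing coefficient of $\{\xi_{\bm k}\}$ between $\mathcal A^-(i;j^\star)$ and $\mathcal A^+(i;r-j^\star)$, and the strict stationarity of $\{\xi_{\bm k}\}$ (shifting by $j^\star$ along coordinate $i$) identifies this with $\varphi_\xi(i;r-2j^\star)$ as soon as $r>2j^\star$. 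Taking the maximum over $i$ yields (\ref{PropMixYij}) and, in particular, $\varphi_{Y(\bm j;\nu,\mu)}(r)\le\varphi(r-2j^\star)\to 0$, so the field is $\varphi$-mixing. The $\rho^*$-mixing claim follows in the same spirit: for disjoint index sets $A_1,A_2$ with $\text{dist}(A_1,A_2)\ge r$, the sub-$\sigma$-fields generated by $Y$ over $A_1$ and over $A_2$ are contained in those generated by $\xi$ over $A_1\cup(A_1+\bm j)$ and $A_2\cup(A_2+\bm j)$, whose distance is at least $r-\|\bm j\|$, so the $\rho^*$-mixing coefficients of the $Y$-field are dominated by those of $\{\xi_{\bm k}\}$ evaluated at $r-\|\bm j\|$ and tend to $0$.

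Finally, for (\ref{Ysummix}) I would split the series at $r=2j^\star$. The finitely many terms with $1\le r\le 2j^\star$ contribute at most $\sum_{r=1}^{2j^\star}r^{q-1}<\infty$, since every $\varphi$-coefficient is bounded by $1$; for the remaining terms, (\ref{PropMixYij}) and the substitution $s=r-2j^\star$ give
\[
  \sum_{r>2j^\star}r^{q-1}\varphi_{Y(\bm j;\nu,\mu)}^{1/2}(r)
  \le\sum_{s\ge 1}(s+2j^\star)^{q-1}\varphi^{1/2}(s)
  \le(1+2j^\star)^{q-1}\sum_{s\ge 1}s^{q-1}\varphi^{1/2}(s),
\]
which is finite by (\ref{condphimix}). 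I do not expect a genuine obstacle; the only step requiring care is the bookkeeping with the half-space $\sigma$-algebras and the stationarity shift behind (\ref{PropMixYij}), in particular the passage from an offset $j^\star$ to the offset $2j^\star$ appearing in the statement.
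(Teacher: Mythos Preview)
Your proof is correct and follows essentially the same route as the paper's: the same $\sigma$-field inclusions $\mathcal A^\pm_{Y}(i;\cdot)\subseteq\mathcal A^\pm_\xi(i;\cdot\mp j^\star)$, the same stationarity shift to pass from an offset $j^\star$ to $2j^\star$, and the same split of the series in (\ref{Ysummix}) at $r=2j^\star$. The only cosmetic differences are that the paper bounds the second moment more directly via $\mathbb E(Y_{\bm 0}^{(\nu,\mu)}(\bm j))^2=\Var(\xi_{\bm 0}^{(\nu)}\xi_{\bm j}^{(\mu)})\le\mathbb E(\xi_{\bm 0}^{(\nu)}\xi_{\bm j}^{(\mu)})^2$, and handles the $\rho^*$-case in a separate lemma using $j^\star$-enlargements (yielding a shift by $2j^\star$) rather than your union argument with shift $\|\bm j\|$; both variants are valid.
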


\begin{proof}
The stationarity of the random field $\left\{\xi_{\bm{i}},i\in\Gamma_{\bm{n}}\right\}$ directly implies the stationarity of $\left\{Y_{\bm{i}}^{(\nu,\mu)}(\bm{j}),\bm i\in\widetilde{\Gamma}_{\bm{n}}(\bm{j})\right\}$. Moreover, we have
\[
\mathbb E\left(Y_{\bm{0}}^{(\nu,\mu)}(\bm{j})\right)^2
\leq \mathbb E\left(\xi_{\bm{0}}^{(\nu)} \xi_{\bm{j}}^{(\mu)}\right)^2
\leq \mathbb \max_{1 \le j \le p} E\left(\xi_{\bm{0}}^{(j)} \right)^4,
\]
by the Cauchy-Schwarz inequality and the stationarity of $\left\{\xi_{\bm{i}}\right\}$. It remains to show that $\left\{Y_{\bm{i}}^{(\nu,\mu)}(\bm{j})\right\}$ is $\varphi$-mixing and that the sum in (\ref{Ysummix}) is finite. For this purpose set $j^{\star}=\max_{1\leq i\leq q}|j_i|$. On the one hand we have
\begin{align*}
\mathcal A_{Y(\bm{j})^{(\nu,\mu)}}^-(i;0)
&=\sigma\left(\left\{Y_{n_1,\ldots,n_q}^{(\nu,\mu)}(\bm j): n_i\leq 0, n_l\ \textnormal{unrestricted for}\ l\neq i \right\}\right)\\&
=\sigma\left(\left\{\xi_{n_1,\ldots,n_q}^{(\nu)}\xi_{n_1+j_1,\ldots,n_q+j_q}^{(\mu)}: n_i\leq 0, n_l\ \textnormal{unrestricted for}\ l\neq i \right\}\right)\\&
\subseteq \sigma\left(\left\{\xi_{n_1,\ldots,n_q}\xi_{n_1+j_1,\ldots,n_q+j_q}: n_i\leq 0, n_l\ \textnormal{unrestricted for}\ l\neq i \right\}\right)\\&
\subseteq\sigma\left(\left\{\xi_{n_1,\ldots,n_q}: n_i\leq j^{\star}, n_l\ \textnormal{unrestricted for}\ l\neq i \right\}\right)\\&
=\mathcal A_{\xi}^-(i;j^{\star}).
\end{align*}
On the other hand we have for all $r\geq 0$ that
\begin{align*}
\mathcal A_{Y(\bm{j})^{(\nu,\mu)}}^+(i;r)
&=\sigma\left(\left\{Y_{n_1,\ldots,n_q}^{(\nu,\mu)}(\bm j): n_i\geq r, n_l\ \textnormal{unrestricted for}\ l\neq i \right\}\right)\\&
=\sigma\left(\left\{\xi_{n_1,\ldots,n_q}^{(\nu)} \xi_{n_1+j_1,\ldots,n_q+j_q}^{(\mu)}: n_i\geq r, n_l\ \textnormal{unrestricted for}\ l\neq i \right\}\right)\\&
\subseteq\sigma\left(\left\{\xi_{n_1,\ldots,n_q} \xi_{n_1+j_1,\ldots,n_q+j_q}: n_i\geq r, n_l\ \textnormal{unrestricted for}\ l\neq i \right\}\right)\\&
\subseteq\sigma\left(\left\{\xi_{n_1,\ldots,n_q}: n_i\geq r-j^{\star}, n_l\ \textnormal{unrestricted for}\ l\neq i \right\}\right)\\&
=\mathcal A_{\xi}^+(i;r-j^{\star}).
\end{align*}
Observe that the above inclusions also show that $ \mathcal A_{Y(\bm{j})^{(\nu,\mu)}}^\pm(i;r) \subseteq \mathcal A_{Y(\bm{j})}^\pm(i;r) $.
Thus, denoting the $\varphi$-mixing coefficients of $\left\{\xi_{\bm{i}},i\in\Gamma_{\bm{n}}\right\}$ by $\varphi_{\xi}$ and those of $\left\{Y_{\bm{i}}(\bm{j}),\bm i\in\widetilde{\Gamma}_{\bm{n}}(\bm{j})\right\},$ $\bm{j}\in\mathbb Z^q,$ by $\varphi_{Y(\bm j)}$ we get \[  \varphi_{Y(j;\nu,\mu)}(i;r) \le \varphi_{Y(j)}(i;r) \] 
as well as 
\[
\varphi_{Y(j;\nu, \mu)}(i;r)\leq\varphi_{\xi}(i;r-2j^{\star})\quad\textnormal{for}\quad r>2j^{\star}
\]
leading to
\begin{align}\label{Ymix}
\varphi_{Y(\bm j;\nu, \mu)}(r)=\max_{1\leq i\leq q}\varphi_{Y(\bm j;\nu, \mu)}(i;r)\leq\max_{1\leq i\leq q}\varphi_{\xi}(i;r-2j^{\star})=\varphi_{\xi}(r-2j^{\star})\quad\textnormal{for}\quad r>2j^{\star}.
\end{align}
Hence $\left\{Y_{\bm{i}}^{(\nu,\mu)} (\bm{j})\right\}$ is $\varphi$-mixing. Lastly, to show that the sum in (\ref{Ysummix}) is finite observe that
\begin{align*}
\sum_{r=1}^{\infty}r^{q-1}\varphi_{Y(\bm{j})}^{\frac{1}{2}}(r)
=\sum_{r=1}^{2j^{\star}}r^{q-1}\varphi_{Y(\bm{j})}^{\frac{1}{2}}(r)+\sum_{r=2j^{\star}+1}^{\infty}r^{q-1}\varphi_{Y(\bm{j})}^{\frac{1}{2}}(r).
\end{align*}
The second sum is finite by inequality (\ref{Ymix}). We obtain
\[
\sum_{r=2j^{\star}+1}^{\infty}r^{q-1}\varphi_{Y(\bm{j})}^{\frac{1}{2}}(r)
\leq\sum_{r=2j^{\star}+1}^{\infty}r^{q-1}\varphi_{\xi}^{\frac{1}{2}}(r-2j^{\star})
=\sum_{r=1}^{\infty}\left(r+2j^{\star}\right)^{q-1}\varphi_{\xi}^{\frac{1}{2}}(r),
\]
where the last sum is finite by assumption (\ref{condphimix}). 
\end{proof}

\begin{lemma} 
\label{LemmaSpectral}
	Let $ p = 1 $.
	If $\left\{\xi_{\bm{i}},i\in\Gamma_{\bm{n}}\right\}$  is $ \rho^* $-mixing, then $\left\{Y_{\bm{i}}(\bm{j}),\bm i\in\widetilde{\Gamma}_{\bm{n}}(\bm{j})\right\}$ is $ \rho^* $-mixing and posses a continuous spectral density function.
\end{lemma}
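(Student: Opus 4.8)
The plan is to split the statement into two parts. First I would show that the product field $\{Y_{\bm i}(\bm j):\bm i\in\Z^q\}$, with $Y_{\bm i}(\bm j)=\xi_{\bm i}\xi_{\bm i+\bm j}-\gamma(\bm j)$, inherits $\rho^*$-mixing from $\{\xi_{\bm i}\}$; second I would deduce the existence of a continuous spectral density from the classical spectral-theory result for $\rho^*$-mixing fields. Strict stationarity of $\{Y_{\bm i}(\bm j)\}$ is inherited immediately from that of $\{\xi_{\bm i}\}$, and square-integrability, $\mathbb E\bigl(Y_{\bm 0}(\bm j)\bigr)^2<\infty$, has already been established in Lemma~\ref{propertiesY}.

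For the mixing transfer, fix $\bm j\in\Z^q$. Since $Y_{\bm i}(\bm j)$ is a Borel function of the pair $(\xi_{\bm i},\xi_{\bm i+\bm j})$, for every index set $A\subseteq\Z^q$ we have
\[
  \sigma\bigl(Y_{\bm k}(\bm j):\bm k\in A\bigr)\subseteq\sigma\bigl(\xi_{\bm k}:\bm k\in A^{\bm j}\bigr),\qquad A^{\bm j}:=A\cup(A+\bm j).
\]
Given nonempty disjoint $A_1,A_2\subseteq\Z^q$ with $\mathrm{dist}(A_1,A_2)\ge r$, every point of $A_i^{\bm j}$ lies within Euclidean distance $\|\bm j\|_2$ of $A_i$, so $\mathrm{dist}(A_1^{\bm j},A_2^{\bm j})\ge r-2\|\bm j\|_2$. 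Using monotonicity of the $\rho$-mixing coefficient $\rho(\calG,\calH)$ with respect to the two $\sigma$-fields, this yields
\[
  \rho^*_{Y(\bm j)}(r)\le\rho^*_\xi\bigl(r-2\|\bm j\|_2\bigr),\qquad r>2\|\bm j\|_2,
\]
where $\rho^*_{Y(\bm j)}$ and $\rho^*_\xi$ denote the $\rho^*$-mixing coefficients of $\{Y_{\bm i}(\bm j)\}$ and $\{\xi_{\bm i}\}$, respectively. Letting $r\to\infty$ for the fixed $\bm j$, together with $\rho^*_\xi(s)\to 0$, gives $\rho^*_{Y(\bm j)}(r)\to 0$, so $\{Y_{\bm i}(\bm j)\}$ is $\rho^*$-mixing. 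This parallels the half-space computation in Lemma~\ref{propertiesY}, but is in fact shorter, because the $\rho^*$-coefficients are defined via arbitrary index sets rather than half-spaces.

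Having verified that $\{Y_{\bm i}(\bm j)\}$ is strictly stationary, square-integrable and $\rho^*$-mixing, I would conclude by invoking the classical fact that such a random field possesses a continuous spectral density function (see \cite[Theorem~28.21]{Bradley}), which then admits the Cesàro representation
\[
  f(t)=\lim_{n\to\infty}n^{-q}\,\mathbb E\biggl|\sum_{\bm k\in\bm 1:n\bm 1}e^{-i\bm k'\lambda_t}Y_{\bm k}(\bm j)\biggr|^2,\qquad t\in S^q,
\]
with $\lambda_t$ and $S^q$ as in the discussion of the mixing assumptions. I do not foresee a genuine obstacle here: the substantive content is entirely contained in the cited spectral-density theorem, while the passage from $\{\xi_{\bm i}\}$ to the product field is routine distance bookkeeping; the only point needing a little care is getting the displacement constant in $r-2\|\bm j\|_2$ right, and any bound of the form $r-C\|\bm j\|_2$ with $\bm j$ fixed would suffice.
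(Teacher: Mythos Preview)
Your proposal is correct and follows essentially the same route as the paper: you transfer $\rho^*$-mixing to $\{Y_{\bm i}(\bm j)\}$ by bounding the generated $\sigma$-fields via an enlargement of the index sets and tracking the resulting distance loss, then invoke \cite[Theorem~28.21]{Bradley}. The only cosmetic differences are that the paper uses the full $j^\star$-enlargement $A_i^{j^\star}=\{\bm k:d(\bm k,A_i)\le j^\star\}$ rather than your tighter $A\cup(A+\bm j)$, and it makes explicit the intermediate step that $\rho^*$-mixing forces the linear dependence measure $\kappa(r)\to 0$, which is the actual hypothesis of Bradley's theorem.
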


\begin{proof}
Observe that
\[
\sup_{A_1, A_2: d(A_1,A_2) \ge r}  \sup_{f, g \in L_2(P)}  | \text{Corr}( f( \{ \xi_{\bm i} \xi_{\bm i + \bm j} ) : \bm i \in A_1 \}), g( \{ \xi_{\bm i} \xi_{\bm + \bm j} : \bm i \in A_2 \} ) |
\]
is less or equal to 
\[
\sup_{A_1, A_2: d(A_1,A_2) \ge r}  \sup_{f, g \in L_2(P)}  \text{Corr}( f( \{ \xi_{\bm i} : \bm i \in A_1^{j^*}  \}), g( \{ \xi_{\bm i} : \bm i \in A_2^{j^\star} \} ),
\]
where $ A_i^{j^\star} = \{ \bm k \in \mathbb{Z}^q : d( \bm k, A_i ) \le j^\star \} $ is the $ j^\star $- enlargement of $ A_i $, $ i = 1, 2 $, which is bounded by
\[
\sup_{A_1, A_2: d(A_1,A_2) \ge r-2 j^\star}  \sup_{f, g \in L_2(P)}  \text{Corr}( f( \{ \xi_{\bm i} : \bm i \in A_1  \}), g( \{ \xi_{\bm i} : \bm i \in A_2 \} ) = \rho^*( r - 2 j^\star ).
\]
It follows now from Theorem~28.21 \cite{Bradley} that $ Y_{\bm i}( \bm j ) $ possesses a continuous spectral density function, since for any  $ \rho^* $-mixing random field the linear dependence measure $ \kappa(r) $ defined by
\[
  \kappa(r) = \sup \frac{ | E( \sum_{\bm i \in \bm Q} a_{\bm i} \xi_{\bm i} ) ( \sum_{\bm i \in \bm S} b_{\bm i} \xi_{\bm i}  ) |}{ \sqrt{ \sum_{\bm i \in \bm Q} |a_{\bm i}|^2 }  \sqrt{ \sum_{\bm i \in \bm Q} |b_{\bm i}|^2 } } 
\]
where the supremum is taken over all pairs of nonempty, finite and disjoint sets $ \bm Q, \bm S $ such that $ d( \bm Q, \bm S ) \ge r $, see \cite[p.~148]{Bradley}, satisfies the necessary and sufficient condition $ \kappa(r) = o(1) $, as $ r \to \infty $.
\end{proof}

For $ p = 1 $ the following basic result for $ \varphi$-mixing random fields can be found in \cite{Deo} without a proof.
 
\begin{lemma}\label{rphi1234} Suppose $ p = 1 $.
Let condition (\ref{condphimix}) be satisfied. Set $S_{\bm{n}}\coloneqq\sum_{1\leq\bm j\leq\bm n}\xi_{\bm j}$ for all $\bm n\geq\bm 1$.
Then the following three assertions hold true.
\begin{itemize}
	\item[(a)] $\sum_{\bm{j}\in\mathbb Z^q}\left|\gamma(\bm{j})\right|<\infty.$
	\item[(b)] $|\bm{n}|^{-1}\mathbb E\left(S_{\bm{n}}^2\right)\to\sigma^2$ as $\bm{n}\to\infty.$
	\item[(c)] $|\bm{n}|^{-1}\mathbb E\left(S_{\bm{n}}^2\right)\leq A(q,\varphi)\mathbb E\left(\xi_{\bm 0}^2\right)$ for all $\bm{n}\geq\bm{1}$, where 
	\[	
	A(q,\varphi)=1+2q\sum_{r=1}^{\infty}(2r+1)^{q-1}\varphi^{\frac{1}{2}}(r).
	\]
\end{itemize}
\end{lemma}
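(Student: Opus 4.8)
The plan is to prove the three assertions in the order (a), (c), (b), since (a) is a prerequisite for the series in (c) to make sense and (c) will be reused when establishing the limit in (b). Throughout I would exploit the covariance inequality recalled just before Lemma \ref{propertiesY}, namely $|\mathbb E(\xi_{\bm 0}\xi_{\bm \ell})| = |\gamma(\bm\ell)| \le 2\varphi^{1/2}(\ell_\star)\,\mathbb E(\xi_{\bm 0}^2)$, valid for $\bm\ell$ with $\ell_\star = \min_j \ell_j \ge 1$ (using $\mathbb E\xi_{\bm 0}=0$ and strict stationarity so that $\mathbb E\xi_{\bm 0}^2 = \mathbb E\xi_{\bm\ell}^2$). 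Combined with $\varphi(0)=1$, this gives a bound of the form $|\gamma(\bm\ell)| \le 2\varphi^{1/2}(\ell_\star)\mathbb E(\xi_{\bm 0}^2)$ uniformly (interpreting $\varphi$ at a nonpositive argument as $\le 1$).

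For (a), I would partition $\mathbb Z^q$ according to the value of $m := \max_j |\ell_j|$. The number of lattice points $\bm\ell$ with $\max_j|\ell_j| = m$ is $(2m+1)^q - (2m-1)^q = O(m^{q-1})$, and for each such $\bm\ell$ one has — after possibly reflecting coordinates so that the minimal coordinate is the relevant one, or more simply using that the mixing bound applies along the coordinate achieving the extreme — a bound $|\gamma(\bm\ell)| \le C\varphi^{1/2}(m)$ for a constant depending only on $\mathbb E\xi_{\bm 0}^2$. Hence $\sum_{\bm\ell\in\mathbb Z^q}|\gamma(\bm\ell)| \le |\gamma(\bm 0)| + \sum_{m\ge 1} O(m^{q-1})\varphi^{1/2}(m) < \infty$ by assumption (\ref{condphimix}). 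A mild subtlety here is that the covariance inequality is naturally stated with $\ell_\star$, the \emph{minimum} coordinate, whereas the counting is cleanest in terms of the \emph{maximum}-norm shell; I would handle this by noting that the mixing coefficient $\varphi(r)$ is the maximum over all $q$ coordinate directions of the half-space coefficients, so one may apply the bound along whichever coordinate $j$ has $|\ell_j| = m$, giving $|\gamma(\bm\ell)|\le 2\varphi^{1/2}(m)\mathbb E\xi_{\bm 0}^2$ directly.

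For (c), expand $\mathbb E(S_{\bm n}^2) = \sum_{\bm i,\bm k\in\bm 1:\bm n}\gamma(\bm i-\bm k)$. Changing variables to $\bm\ell = \bm i - \bm k$ and bounding the number of pairs giving each $\bm\ell$ by $|\bm n|$, one gets $|\bm n|^{-1}\mathbb E(S_{\bm n}^2) \le \sum_{\bm\ell}|\gamma(\bm\ell)|$, and then the shell decomposition from (a) yields exactly $|\bm n|^{-1}\mathbb E(S_{\bm n}^2) \le \mathbb E(\xi_{\bm 0}^2)\bigl(1 + 2q\sum_{r\ge 1}(2r+1)^{q-1}\varphi^{1/2}(r)\bigr) = A(q,\varphi)\mathbb E(\xi_{\bm 0}^2)$; the factor $2q$ and the $(2r+1)^{q-1}$ come from counting, for fixed $r$, the lattice points with $\max_j|\ell_j| = r$ lying on each of the $2q$ faces of the cube, each face containing at most $(2r+1)^{q-1}$ points. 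Finally, for (b), write $|\bm n|^{-1}\mathbb E(S_{\bm n}^2) = \sum_{\bm\ell}\prod_{j=1}^q\bigl(1-|\ell_j|/n_j\bigr)^+\gamma(\bm\ell)$, where the product weights are bounded by $1$ and tend to $1$ pointwise in $\bm\ell$ as $\bm n\to\infty$; since $\sum_{\bm\ell}|\gamma(\bm\ell)|<\infty$ by (a), dominated convergence for series gives $|\bm n|^{-1}\mathbb E(S_{\bm n}^2)\to\sum_{\bm\ell}\gamma(\bm\ell) = \sigma^2$.

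The main obstacle is really just bookkeeping: getting the counting constant in (c) to match $A(q,\varphi)$ exactly, and being careful that the covariance inequality is invoked along an appropriate coordinate direction so that $\varphi^{1/2}$ is evaluated at the max-norm of the lag (which is what makes the shell sums converge under (\ref{condphimix})). There is no deep analytic difficulty; strict stationarity, mean zero, and the half-space $\varphi$-mixing covariance bound do all the work, and (\ref{condphimix}) is precisely the summability needed. One should also note the trivial case $\bm\ell = \bm 0$ is handled separately (the $1$ in $A(q,\varphi)$), and that the $\rho^*$-mixing variant is not needed here.
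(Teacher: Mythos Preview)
Your proposal is correct. Note that the paper itself does not prove Lemma~\ref{rphi1234} directly---it attributes the result to \cite{Deo} without proof---but it does prove the multivariate generalisation (Lemma~\ref{RPHI}), and your approach for (a) and (c) coincides with what is done there: bound $|\gamma(\bm\ell)|$ via the half-space $\varphi$-mixing covariance inequality applied along the coordinate achieving $\max_j|\ell_j|$, then sum over max-norm shells using the face count $\le 2q(2r+1)^{q-1}$. Your remark about choosing the right coordinate direction (so that $\varphi^{1/2}$ is evaluated at the max-norm of the lag rather than the min) is exactly the point, and is implicit in the paper's shell decomposition.

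For (b) the two routes diverge. The paper, in proving Lemma~\ref{RPHI}(b), works harder than necessary for the scalar case: it establishes the intermediate identity $\Var(|\bm n|^{-1/2}S_{\bm n}) = \sum_{-\bm n \le^* \bm\ell \le^* \bm n}\mathbb E(\xi_{\bm 0}\xi_{\bm\ell}') + o(1)$ by a Ces\`aro-sum argument, peeling off the factors $(n_j-|\ell_j|)/n_j$ one coordinate at a time and showing each correction term is $o(1)$. Your dominated-convergence-for-series argument---the weights $\prod_j(1-|\ell_j|/n_j)^+$ are bounded by $1$, tend to $1$ pointwise, and $\sum_{\bm\ell}|\gamma(\bm\ell)|<\infty$ by (a)---is shorter and entirely sufficient here. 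The paper's longer route buys the sharper intermediate expansion displayed in Lemma~\ref{RPHI}(b), which is not needed for the scalar statement.
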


The following result generalizes Lemma~\ref{rphi1234} to a general random field taking values in $ \R^p $, $ p \in \N $. 

\begin{lemma} 
\label{RPHI}
	Assume that condition (\ref{condphimix}) is satisfied. Let $S_{\bm{n}}\coloneqq\sum_{1\leq\bm j\leq\bm n}\xi_{\bm j}$, $\bm n\geq\bm 1$.
	\begin{itemize}
		\item[(a)] $ \sum_{\bm j \in \Z^q} \| \gamma(\bm j) \|_\infty < \infty. $
		\item[(b)] It holds
		\[
		  \Var( | \bm n |^{-1/2} S_{\bm n} ) = \mathbb E( \xi_{\bm 0} \xi_{\bm 0}' ) + \sum_{-\bm n \le^* \ell \le^* {\bm n}} \mathbb E( \xi_{\bm 0} \xi_{\bm \ell}' )  + o(1),
		\]
		as $ {\bm n} \to \infty $, and
		\[ \Var( | \bm n |^{-1/2} S_{\bm n} )  = |\bm{n}|^{-1}\mathbb E\left(S_{\bm{n}}S_{\bm{n}}' \right) \to \sigma^2, \] as $ \bm n \to \infty $, in $ (\R^{p \times p}, \| \cdot \|_\infty ) $.
	\item[(c)] If $\bm{n}\geq\bm{1}$, then \[
	 \left\| |\bm{n}|^{-1}\mathbb E\left( S_{\bm{n}} S_{\bm{n}}' \right) \right\|_\infty \leq A(q,\varphi) \max_{1 \le j \le p} \mathbb E(\xi_{\bm 0}^{(j)} )^2,
	 \] 
	 where $ A(q,\varphi) $ is as in Lemma~\ref{rphi1234}.
		\item[(d)] For all $ \bm \ell \in \Z^q $ we have
		\[
		\| \mathbb E( \xi_{\bm 0} \xi_{\bm \ell}' ) \|_\infty = \max_{1 \le i,j \le p} | \mathbb E( \xi_{\bm 0}^{(i)} \xi_{\bm \ell}^{(j)} ) |
		\le 2 \varphi^{1/2}( l_\star ) \max_{1 \le j \le p} \mathbb E | \xi_{\bm 0}^{(j)} |^2,
		\]
		where $ l_\star = \min_j l_j $.
	\end{itemize}
\end{lemma}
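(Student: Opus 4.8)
The plan is to reduce every assertion to the scalar results of Lemma~\ref{rphi1234}, exploiting that $\|\cdot\|_\infty$ on $\R^{p\times p}$ is the maximum of the absolute values of the entries, so that the vector field may be treated coordinate by coordinate. Two observations will be used repeatedly: each coordinate field $\{\xi_{\bm i}^{(j)}\}$ is strictly stationary and $\varphi$-mixing with mixing coefficients $\varphi_{\xi^{(j)}}\le\varphi_\xi$, its generating $\sigma$-fields being sub-$\sigma$-fields of those of $\xi$; and $A(q,\cdot)$ of Lemma~\ref{rphi1234} is non-decreasing in the mixing sequence, so Lemma~\ref{rphi1234} applies to every $\xi^{(j)}$ with the uniform constant $A(q,\varphi)$. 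I would prove the parts in the order (d), (a), (b), (c). For (d), fix a coordinate $k$; since $\xi_{\bm 0}^{(i)}$ is $\mathcal F_{-\infty}^0$-measurable and, because $\ell_k\ge\ell_\star$, $\xi_{\bm\ell}^{(j)}$ is $\mathcal F_{\ell_\star}^\infty$-measurable in the $k$-th coordinate filtration, the $\varphi$-mixing covariance inequality recalled above gives $|\mathbb E(\xi_{\bm 0}^{(i)}\xi_{\bm\ell}^{(j)})|\le 2\varphi_k^{1/2}(\ell_\star)\sqrt{\mathbb E(\xi_{\bm 0}^{(i)})^2\,\mathbb E(\xi_{\bm\ell}^{(j)})^2}$ (the means vanish); strict stationarity, $\varphi_k\le\varphi$, the inequality $\sqrt{\mathbb E(\xi_{\bm 0}^{(i)})^2\mathbb E(\xi_{\bm 0}^{(j)})^2}\le\max_m\mathbb E(\xi_{\bm 0}^{(m)})^2$, and finally the maximum over $i,j$ yield (d) (the case $\ell_\star\le0$ being trivial, since then $\varphi^{1/2}(\ell_\star)\ge1$ and the claim reduces to Cauchy--Schwarz).

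For (a) I would sharpen this. Choose $k$ with $|\ell_k|=\|\bm\ell\|_\infty$. If $\ell_k\ge0$, the direction-$k$ inequality gives $|\gamma(\bm\ell)_{ij}|\le2\varphi_k^{1/2}(\ell_k)\max_m\mathbb E(\xi_{\bm 0}^{(m)})^2$; if $\ell_k<0$, first rewrite $\gamma(\bm\ell)_{ij}=\mathbb E(\xi_{\bm 0}^{(j)}\xi_{-\bm\ell}^{(i)})$ by stationarity and apply the same inequality in direction $k$ with lag $-\ell_k>0$. Either way $\|\gamma(\bm\ell)\|_\infty\le2\varphi^{1/2}(\|\bm\ell\|_\infty)\max_m\mathbb E(\xi_{\bm 0}^{(m)})^2$. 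Grouping the lattice points by the value $r=\|\bm\ell\|_\infty$, of which there are $(2r+1)^q-(2r-1)^q\le2q(2r+1)^{q-1}$, then bounds $\sum_{\bm\ell\in\Z^q}\|\gamma(\bm\ell)\|_\infty$ by $2A(q,\varphi)\max_m\mathbb E(\xi_{\bm 0}^{(m)})^2$, which is finite by (\ref{condphimix}).

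For (b), expand $\mathbb E(S_{\bm n}S_{\bm n}')=\sum_{\bm i,\bm k\in\bm 1:\bm n}\mathbb E(\xi_{\bm i}\xi_{\bm k}')$, use stationarity to replace $\mathbb E(\xi_{\bm i}\xi_{\bm k}')$ by $\gamma(\bm k-\bm i)$, and count that $\bm k-\bm i=\bm\ell$ has $\prod_{j=1}^q(n_j-|\ell_j|)$ solutions, giving
\[
|\bm n|^{-1}\mathbb E(S_{\bm n}S_{\bm n}')=\sum_{|\bm\ell|\le\bm n-\bm 1}\Bigl(\prod_{j=1}^q\tfrac{n_j-|\ell_j|}{n_j}\Bigr)\gamma(\bm\ell).
\]
The coefficients lie in $[0,1]$ and tend to $1$ for each fixed $\bm\ell$, and the summand is dominated in $\|\cdot\|_\infty$ by $\|\gamma(\bm\ell)\|_\infty$, summable by (a); dominated convergence on $\Z^q$ then gives convergence to $\sigma^2=\sum_{\bm\ell\in\Z^q}\gamma(\bm\ell)$, and isolating the term $\bm\ell=\bm 0$, namely $\gamma(\bm 0)=\mathbb E(\xi_{\bm 0}\xi_{\bm 0}')$, together with $\sum_{\bm 0\neq|\bm\ell|\le\bm n-\bm1}\bigl(1-\prod_j\tfrac{n_j-|\ell_j|}{n_j}\bigr)\gamma(\bm\ell)=o(1)$ (again by dominated convergence), yields the first displayed identity of (b).

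For (c), bound each entry by Cauchy--Schwarz, $|\mathbb E(S_{\bm n}^{(i)}S_{\bm n}^{(j)})|\le\|S_{\bm n}^{(i)}\|_2\,\|S_{\bm n}^{(j)}\|_2$, invoke Lemma~\ref{rphi1234}(c) for each scalar field $\xi^{(i)}$ with the uniform constant $A(q,\varphi)$, use $\sqrt{\mathbb E(\xi_{\bm 0}^{(i)})^2\mathbb E(\xi_{\bm 0}^{(j)})^2}\le\max_m\mathbb E(\xi_{\bm 0}^{(m)})^2$, and take the maximum over $i,j$. No step is a serious obstacle; the only points needing care are the stationarity shift in (a) for lags with negative coordinates, the combinatorial conversion of the $\Z^q$-sum into the one-dimensional condition (\ref{condphimix}) via the $O(r^{q-1})$ count of sup-norm spheres, and, in (b), verifying that the dominating function $\|\gamma(\bm\ell)\|_\infty$ is independent of $\bm n$ so that dominated convergence genuinely applies.
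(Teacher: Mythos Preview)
Your argument is correct and in several places cleaner than the paper's own proof. The two approaches agree on (d), but diverge elsewhere.

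For (a), you first upgrade (d) to the sup-norm bound $\|\gamma(\bm\ell)\|_\infty\le 2\varphi^{1/2}(\|\bm\ell\|_\infty)\max_m\mathbb E(\xi_{\bm 0}^{(m)})^2$, handling negative coordinates explicitly via the stationarity shift $\gamma(\bm\ell)_{ij}=\mathbb E(\xi_{\bm 0}^{(j)}\xi_{-\bm\ell}^{(i)})$. The paper instead sums (d) as stated over ``maximum-norm unit spheres'' and is less explicit about how the direction-$k$ half-space inequality is being invoked for general $\bm\ell\in\Z^q$; your formulation is tighter and more transparent.

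For (b), the paper works hard: it writes out the weighted-lag expansion and then, via iterated Ces\`aro summation, peels off the factors $(n_j-|\ell_j|)/n_j$ one coordinate at a time to reduce to the unweighted partial sum. Your route---dominated convergence on $\Z^q$ using (a) as the summable majorant---achieves the same two conclusions in a few lines. What the paper's longer argument buys is explicit identification of the remainder as a Ces\`aro average of tails of $\sum\|\gamma(\bm\ell)\|_\infty$, but this extra precision is not used anywhere else.

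For (c), the paper bounds the lag expansion directly, again summing $\|\mathbb E(\xi_{\bm 0}\xi_{\bm\ell}')\|_\infty$ over sup-norm spheres; you instead apply Cauchy--Schwarz entrywise and invoke the scalar Lemma~\ref{rphi1234}(c) for each coordinate field, using that $\varphi_{\xi^{(j)}}\le\varphi$ and that $A(q,\cdot)$ is monotone in the mixing sequence. Both give the same constant $A(q,\varphi)$; your reduction to the scalar case is shorter and makes the role of Lemma~\ref{rphi1234} as the univariate input completely explicit.
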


\begin{proof}  By \cite[Lemma~1]{Bil1968} 
	\begin{equation}
	\label{BilInequ}
	\max_{1 \le i,j \le p} | \mathbb E( \xi_{\bm 0}^{(i)} \xi_{\bm \ell}^{(j)} ) |
	\le 2 \varphi^{1/2}( l_\star )  \sqrt{ \mathbb E | \xi_{\bm 0}^{(i)} |^2 } \sqrt{ \mathbb E | \xi_{\bm 0}^{(j)} |^2 }
	\le 2 \varphi^{1/2}( l_\star ) \max_{1 \le j \le p} \mathbb E | \xi_{\bm 0}^{(j)} |^2,
	\end{equation}
	for any $ \bm \ell \in \Z^q $. Hence, by summing over all maximum-norm $q$-dimensional unit spheres,
	\begin{align*}
	  \sum_{\bm \ell \in \Z^q} \| \gamma( \bm \ell ) \|_\infty & = 
	    \sum_{\bm \ell} \max_{1 \le i, j \le p} | \mathbb E( \xi_{\bm 0}^{(i)} \xi_{\bm \ell}^{(j)} ) | \\
	    &\le 2 \max_{1 \le j \le p} \mathbb E( \xi_{\bm 0}^{(j)} )^2 \sum_{\bm \ell \in \Z^q}  \varphi^{1/2}( l_\star )  \\
	    &= 2 \max_{1 \le j \le p} \mathbb E( \xi_{\bm 0}^{(j)} )^2 \sum_{r=0}^\infty \varphi^{1/2}( r ) d(r),
	\end{align*}
	where $d(r) = \#( \bm \ell \in \Z^q : l^\star = r ) \le 2 q (2r+1)^{q-1} $. Hence (a) and (d) are shown.  To verify (b), we have to show that
  	\[
    \Var( | \bm n |^{-1/2} S_{\bm n} ) = \mathbb E( \xi_{\bm 0} \xi_{\bm 0}' ) +
    \sum_{l_1=-n_1+1}^{n_1-1} \frac{n_1-\ell_1}{n_1} \cdots \sum_{\ell_q=-n_q+1}^{n_q-1} \frac{n_q-|\ell_q|}{n_q} I_{\bm \ell} 
    \mathbb E(\xi_{\bm 0} \xi_{\bm \ell}'),
    \]
    where $ I_{\bm \ell} = \eins( \exists 1 \le j \le q: \ell_j \not= 0) $, satisfies 
   \[
    \Var( | \bm n |^{-1/2} S_{\bm n} ) = 
    \sum_{-\bm n \le^* \bm \ell \le^* \bm n}  \mathbb E( \xi_{\bm 0} \xi_{\bm \ell}' ) + o(1),
   \]
   as $ {\bm n} \to \infty $. Clearly, this follows from
   \[
   \sum_{l_1=-n_1+1}^{n_1-1} \frac{n_1-|\ell_1|}{n_1} \cdots \sum_{\ell_q=-n_q+1}^{n_q-1} \frac{n_q-|\ell_q|}{n_q} I_{\bm \ell} 
   \mathbb E(\xi_{\bm 0} \xi_{\bm \ell}') = \sum_{-\bm n \le^* \bm \ell \le^* \bm n} \mathbb E( \xi_{\bm 0} \xi_{\bm \ell}') + o(1),
   \]
   as $ {\bm n} \to \infty $.   In what follows, $ \bm \ell = (\ell_1, \dots, \ell_q)' \in \N_0^q $,
   $ {\bm \ell}_{-k} = ( \ell_j )_{j = 1, \dots, q, j \not= k} $ and $ ( {\bm \ell}_{-k}, i_k ) $ is obtained from $ {\bm \ell} $ by replacing the $k$th entry, $ \ell_k $, by $ i_k $.  Note that
   \[
     \sum_{\ell_q=-n_q+1}^{n_q-1} \frac{n_q-|\ell_q|}{n_q} I_{\bm \ell}  \mathbb E( \xi_{\bm 0} \xi_{\bm \ell}' ) 
     = \sum_{\ell_q=0}^{n_q-1} I_{\bm \ell}  \mathbb E ( \xi_{\bm 0} \xi_{\bm \ell}' )
     - \frac{1}{n_q} \sum_{\ell_q=-n_q+1}^{n_q-1} I_{\bm \ell} |\ell_q|  \mathbb E ( \xi_{\bm 0} \xi_{\bm \ell}' ). 
   \]
   We shall show that the second term is $ o(1) $ and remains $ o(1) $ when propagated through the outer summations. 
   To show that  
   \[
	\wt{r}_q( \bm \ell_{-q} ) :=  \frac{1}{n_q} \sum_{\ell_q=-n_q+1}^{n_q-1} |\ell_q|  I_{\bm \ell} \mathbb E( \xi_{\bm 0} \xi_{\bm \ell}' )  = o(1),    
	\]
   it suffices to consider the case $ I_{\bm \ell} = 1 $, i.e. we may neglect this indicator.  Decompose $ \wt{r}_q( \bm \ell_{-q} ) = \wt{r}_q^+( \bm \ell_{-q} ) + \wt{r}_q^-( \bm \ell_{-q} ) $, where $ r_q^+( \bm \ell_{-q} ) := \frac{1}{n_q} \sum_{\ell_q=1}^{n_q-1} \ell_q \mathbb E( \xi_{\bm 0} \xi_{\bm \ell}' )  $ and $ r_q^-( \bm \ell_{-q} ) := \frac{1}{n_q} \sum_{\ell_q=1}^{n_q-1} \ell_q \mathbb E( \xi_{\bm 0} \xi_{\bm \ell_{-q},-\ell_q}' ) $ We treat $ r_q( \bm \ell_{-q} ) := r_q^+( \bm \ell_{-q} ) $, since
   $ r_q^-( \bm \ell_{-q} ) $ can handle analogously.
   Indeed, we will show that for all $ k \in \{1, \dots, q \} $
   \[
     r_k( \bm \ell_{-k} ) := \frac{1}{n_k} \sum_{\ell_k=1}^{n_k-1} \ell_k \mathbb E( \xi_{\bm 0} \xi_{\bm \ell}' ) 
     = \frac{1}{n_k} \sum_{\ell_k=1}^{n_k-1} \sum_{i_k=\ell_k}^{n_k-1} \mathbb E( \xi_{\bm 0} \xi_{\bm \ell_{-k},i_k}') 
      = o(1).
   \]
   We have
   $
     \| r_k( \bm \ell_{-k} ) \|_\infty \le \frac{1}{n_k} \sum_{\ell_k=1}^{n_k-1} \sum_{i_k \ge \ell_k} \| \mathbb E( \xi_{\bm 0} \xi_{\bm \ell_{-k},i_k}' ) \|_\infty. 
   $
   Observe that for any $ j \in \{1, \dots, q\} \backslash \{ k \} $
   \[
    \left\|  \sum_{\ell_j=0}^{n_j-1} r( \bm \ell_{-k} ) \right\|_\infty 
    \le \frac{1}{n_k}
    \sum_{\ell_k=1}^{n_k-1} \sum_{i_k \ge \ell_k} \sum_{\ell_j=0}^{\infty} 
    \| \mathbb E( \xi_{\bm 0} \xi_{\bm \ell_{-k},i_k}' ) \|_\infty = o(1),
   \]
   as $ \bm n \to \infty $, by a Cesaro sum argument, since, of course,
   $ \sum_{\ell_k=1}^{\infty} \sum_{\ell_j=1}^{\infty} \| \mathbb E( \xi_{\bm 0} \xi_{\bm \ell_{-k},i_k}' ) \|_\infty  \le \sum_{\ell \in \N^q} \| \mathbb E ( \xi_{\bm 0} \xi_{\bm \ell}') \|_\infty < \infty $. Similarly, using again a Cesaro sum argument,
  \[ 
    \left\| \sum_{\ell_j=0}^{n_j-1} \frac{n_j - \ell_j}{n_j}  r( \bm \ell_{-k} ) \right\|_\infty 
    = \left\| \sum_{\ell_j=0}^{n_j-1} r( \bm \ell_{-k} ) - \frac{1}{n_j} \sum_{\ell_j=0}^{n_j-1} \sum_{\ell_k=\ell_j}^{n_j-1} r( \bm \ell_{-k} ) \right\|_\infty = o(1),
  \]
  as $ \bm n \to \infty $. By combinding the results for $ r_k^+ $ and $ r_k^- $ we now also obtain
  \[ \left\| \sum_{\ell_j=-n_j+1}^{n_j-1} \frac{n_j - |\ell_j|}{n_j}  r( \bm \ell_{-k} ) \right\|_\infty  = o(1), \] as $ \bm n \to \infty $.
   More generally, by iterating the above argument, we have, firstly, for
  any pairwise different $ j_1, \dots, j_r \in \{ 1, \dots, q\} \backslash \{ k \} $, in the $ \| \cdot \|_\infty $-norm,
  $
    \sum_{\ell_{j_1}=-n_{j_1}+1}^{n_{j_1}-1} \cdots \sum_{\ell_{j_q}=-n_{j_q}+1}^{n_{j_q}-1} r( \bm \ell_{-k} ) = o(1)
  $
  and, secondly, 
  \begin{equation}
  \label{IteratedSumsVanish}
  \sum_{\ell_{j_1}=-n_{j_1}+1}^{n_{j_1}-1} \frac{ n_{j_1} - |\ell_{j_1}| }{ n_{j_1} } \cdots
  \sum_{\ell_{j_q}=-n_{j_q}+1}^{n_{j_q}-1} \frac{ n_{j_q} - |\ell_{j_q}| }{ n_{j_q} } r( \bm \ell_{-k} ) = o(1),
  \end{equation}
  as $ \bm n \to \infty $. Now we may conclude that 
  (\ref{IteratedSumsVanish}) yields
  \begin{align*}
    & \sum_{\ell_1=-n_1+1}^{n_1-1} \frac{n_1-|\ell_1|}{n_1} \cdots \sum_{\ell_q=-n_q+1}^{n_q-1} 
    \frac{n_q-|\ell_q|}{n_q}  I_{\bm \ell} \mathbb E( \xi_{\bm 0} \xi_{\bm \ell}' )  \\
    & \quad = \sum_{\ell_1=-n_1+1}^{n_1-1} \frac{n_1-|\ell_1|}{n_1} \cdots \sum_{\ell_{q-1}=-n_{q-1}+1}^{n_{q-1}-1} 
    \frac{n_{q-1}-|\ell_{q-1}|}{n_{q-1}} \sum_{\ell_q=-n_q+1}^{n_q-1}  I_{\bm \ell} \mathbb E( \xi_{\bm 0} \xi_{\bm \ell}' ) + o(1)\\
    & \quad = \cdots = \sum_{\ell_1=-n_1+1}^{n_1-1} \cdots \sum_{\ell_q=-n_q+1}^{n_q-1}  I_{\bm \ell} \mathbb E( \xi_{\bm 0} \xi_{\bm \ell}' ) + o(1) \\
    & \quad = \sum_{-\bm n \le^* \bm \ell \le^* \bm n} \mathbb E( \xi_{\bm 0} \xi_{\bm \ell}' ) + o(1), 
  \end{align*}
  as $ \bm n \to \infty $. This shows (b).
  Assertion (c) can be shown by summing over all $q$-dimensional unit squares w.r.t to the maximum norm,
  \begin{align*}
   \| \Var( | \bm n |^{-1/2} S_{\bm n} ) \|_\infty & = \left\| \frac{1}{| \bm n| } \mathbb E( S_{\bm n} S_{\bm n}' ) \right\|_\infty 
   \le \sum_{-\bm n \le^* \bm \ell \le^* \bm n} \left| \prod_{j=1}^q \frac{n_j-|\ell_j|}{n_j} \right|   \mathbb \| E( \xi_{\bm 0} \xi_{\bm \ell}' ) \|_\infty  \\
   & \le \sum_{-\bm n \le^* \bm \ell \le^* \bm n} \| \mathbb E( \xi_{\bm 0} \xi_{\bm 0}' ) \|_\infty 
   = \sum_{r=1}^\infty c(r) \| \mathbb E( \xi_{\bm 0} \xi_{r \bm 1}' ) \|_\infty, 
  \end{align*}
  where $ c(r) = \#( - \bm n \le^* \bm i \le \bm n : \max_j | i_j | = r ) \le 2 q( 2r + 1)^{q-1} $. Using again
  (\ref{BilInequ}),  we obtain
  \[
    \| \mathbb E( \xi_{\bm 0} \xi_{\bm \ell}' ) \|_\infty \le 2 \varphi^{1/2}( r ) \max_{1 \le j \le p} \mathbb E( \xi_{\bm 0}^{(j)} )^2
  \]
  for any $ \bm \ell \in {\bm 1} : {\bm n} $ with $ l_\star = r $. Hence
  \[
    \left\|
    \Var( | \bm n |^{-1/2} S_{\bm n} ) \right\|_\infty \le \| \mathbb E( \xi_{\bm 0} \xi_{\bm 0}' )  \|_\infty
    + \sum_{r=1}^{n^\star} 2 q (2r+1)^{q-1} 4 \varphi^{1/2}(r) \max_{1 \le j \le p} \mathbb E( \xi_{\bm 0}^{(j)} )^2,
  \] 
  and we arrive at 
    \[
    \left\|
    \Var( | \bm n |^{-1/2} S_{\bm n} ) \right\|_\infty
    \le A(\varphi,q) \max_{1 \le j \le p} \mathbb E( \xi_{\bm 0}^{(j)} )^2.
  \]
 \end{proof}

We are now in a position to prove the consistency of the estimator $ \wh{\sigma}_{\bm n}^2 $ for multivariate random fields.

\begin{proof}[Proof of Theorem \ref{consistencyvariance}]
Define the $ p \times p $ matrix
\[
\widetilde{\sigma}_{\bm{n}}^2\coloneqq\mathbb E\left(\widehat{\sigma}_{\bm{n}}^2\right)=\sum_{|\bm{j}|\leq \bm{m}}w_{\bm{m}}(\bm{j})\gamma(\bm{j}).
\]
We show that $ \| \widehat{\sigma}_{\bm{n}}^2-\widetilde{\sigma}_{\bm{n}}^2 \|_\infty \stackrel{\mathbb P}\to 0$ and $ \| \widetilde{\sigma}_{\bm{n}}^2-\sigma^2 \|_\infty \to 0$ as $\bm{n}\to\infty$.
Notice that
\begin{align*}
\left\|\widetilde{\sigma}_{\bm{n}}^2-\sigma^2\right\|_\infty
&=\left\|\sum_{|\bm{j}|\leq \bm{m}}w_{\bm{m}}(\bm{j})\gamma(\bm{j})-\sum_{\bm{j}\in\mathbb Z^q}\gamma(\bm{j})\right\|_\infty
\leq\left\|\sum_{|\bm{j}|\leq \bm{m}}\left(w_{\bm{m}}(\bm{j})-1\right)\gamma(\bm{j})\right\|_\infty + o(1)\\&
\leq\sum_{|\bm{j}|\leq \bm{m}}\left|w_{\bm{m}}(\bm{j})-1\right|\left\|\gamma(\bm{j})\right\|_\infty \mathds{1}_{\{|\bm{j}|\leq \bm{m}\}}+o(1),
\end{align*}
as $ \bm n \to \infty $, which implies $ \bm m \to \infty $, by Lemma~\ref{rphi1234}(a). 
For the first summand this follows by dominated convergence: For $\bm{j}\in\mathbb Z^q$ let
\[
f_{\bm{m}}(\bm{j})=\left|w_{\bm{m}}(\bm{j})-1\right|\left\|\gamma(\bm{j})\right\|_\infty \mathds{1}_{\{|\bm{j}|\leq \bm{m}\}}.
\]
Clearly, $f_{\bm{m}}(\bm{j})\to 0$ for each fixed $\bm{j}\in\mathbb Z^q$, as $\bm{m}\to\infty$. Moreover,
\[
\left|f_{\bm{m}}(\bm{j})\right|\leq (C_w+1)\left\|\gamma(\bm{j})\right\|_\infty\eqqcolon g(\bm{j})
\]
with $\sum_{\bm{j}\in\mathbb Z^q}\left|g(\bm{j})\right|<\infty$ by Lemma~\ref{rphi1234}(a). Thus, the first summand also converges to zero and hence we obtain $\| \widetilde{\sigma}_{\bm{n}}^2-\sigma^2 \|_\infty \to 0$.

We now show that $ \| \widehat{\sigma}_{\bm{n}}^2-\widetilde{\sigma}_{\bm{n}}^2 \|_\infty \stackrel{\mathbb P}\to 0$. By the Markov inequality we have
\[
\mathbb P\left(\left\|\widehat{\sigma}_{\bm{n}}^2-\widetilde{\sigma}_{\bm{n}}^2\right\|_\infty>\varepsilon\right)
\le \frac{ \mathbb E \| \widehat{\sigma}_{\bm{n}}^2-\widetilde{\sigma}_{\bm{n}}^2 \|_\infty }{\varepsilon}.
\]
We shall bound the right-hand side by an expression which is $ o(1) $. Combined with $ \| \wt{\sigma}_{\bm n}^2 - \sigma^2 \|_\infty = o(1) $, as $ \bm n \to \infty $, this then shows the second assertion of Theorem~\ref{consistencyvariancel2}. By boundedness of the weights $w_{\bm{m}}(\bm{j})$ we obtain
\begin{align}\label{stochconv}
\mathbb E \| \widehat{\sigma}_{\bm{n}}^2-\widetilde{\sigma}_{\bm{n}}^2 \|_\infty
&
\leq \sum_{|\bm{j}|\leq \bm{m}}\left|w_{\bm{m}}(\bm{j})\right| \mathbb E\left\|\widehat{\gamma}_n(\bm{j})-\gamma(\bm{j})\right\|_\infty\notag\\&
\leq C_w \sum_{\nu, \mu = 1}^{p} \sum_{|\bm{j}|\leq \bm{m}}
\mathbb E\left|\widehat{\gamma}_n^{(\nu,\mu)}(\bm{j})-\gamma^{(\nu,\mu)}(\bm{j})\right| \notag\\&
\leq C_w   \sum_{\nu, \mu = 1}^{p}  \sum_{|\bm{j}|\leq \bm{m}} \left(\mathbb E\left|\widehat{\gamma}_n^{(\nu,\mu)}(\bm{j})-\gamma^{(\nu,\mu)}(\bm{j})\right|^2\right)^{1/2}.
\end{align}
Observe that
\[
\widehat{\gamma}_n^{(\nu,\mu)}(\bm{j})-\gamma(\bm{j})^{(\nu,\mu)}
=\frac{1}{\left|\widetilde{\Gamma}_{\bm{n}}(\bm{j})\right|}\sum_{\bm i\in\widetilde{\Gamma}_{\bm{n}}(\bm{j})}\xi_{\bm{i}}^{(\nu)} \xi_{\bm{i}+\bm{j}}^{(\mu)}-\mathbb E\left(\xi_{\bm{0}}^{(\nu)} \xi_{\bm{j}}^{(\mu)} \right)
=\frac{1}{\left|\widetilde{\Gamma}_{\bm{n}}(\bm{j})\right|}\sum_{\bm i\in\widetilde{\Gamma}_{\bm{n}}(\bm{j})}Y_{\bm{i}}^{(\nu,\mu)}(\bm{j})
\]
with $Y_{\bm{i}}^{(\nu,\mu)}(\bm{j})$ defined in (\ref{Yij}).
Thus we obtain
\[
\mathbb E\left|\widehat{\gamma}_n^{(\nu,\mu)}(\bm{j})-\gamma^{(\nu,\mu)}(\bm{j})\right|^2
=\mathbb E\left(\frac{1}{\left|\widetilde{\Gamma}_{\bm{n}}(\bm{j})\right|}\sum_{\bm i\in\widetilde{\Gamma}_{\bm{n}}(\bm{j})}Y_{\bm{i}}^{(\nu,\mu)}(\bm{j})\right)^2
=\frac{1}{\left|\widetilde{\Gamma}_{\bm{n}}(\bm{j})\right|^2} \mathbb E\left(\sum_{\bm i\in\widetilde{\Gamma}_{\bm{n}}(\bm{j})}Y_{\bm{i}}^{(\nu,\mu)}(\bm{j})\right)^2.
\]
For each $\bm{j}\in\mathbb Z^q$ we can now apply Lemma~\ref{rphi1234}(c) to the expectation of the squared sum of the random field $\{Y_{\bm{i}}^{(\nu,\mu)}(\bm{j})\}$ as condition (\ref{Ysummix}) is fulfilled by Lemma~\ref{propertiesY}. This leads to
\begin{align}\label{sumest}
\mathbb E\left|\widehat{\gamma}_n^{(\nu,\mu)}(\bm{j})-\gamma^{(\nu,\mu)}(\bm{j})\right|^2
&\leq\frac{1}{\left|\widetilde{\Gamma}_{\bm{n}}(\bm{j})\right|}A\left(q,\varphi_{Y(j)}\right)\mathbb E\left(Y_{\bm{0}}^{(\nu,\mu)}(\bm{j})\right)^2\notag\\&
\leq\frac{\left(1+2q\sum_{r=1}^{\infty}(2r+1)^{q-1}\varphi_{Y(\bm{j})}^{\frac{1}{2}}(r)\right) \max_{1 \le j \le p} \mathbb E(\xi_{\bm{0}}^{(j)})^4} {\left|\widetilde{\Gamma}_{\bm{n}}(\bm{j})\right|},
\end{align}
where the last inequality is fulfilled by Lemma~\ref{propertiesY} and the definition of $A(q,\varphi)$ in Lemma~\ref{rphi1234}(c).
Combining (\ref{stochconv}) with (\ref{sumest}) we obtain 
\[
  \mathbb E \| \widehat{\sigma}_{\bm{n}}^2-\widetilde{\sigma}_{\bm{n}}^2 \|_\infty 
  \le  \max_{1 \le j \le p} \mathbb E^{1/2}(\xi_{\bm{0}}^{(j)})^4 \sum_{|\bm{j}|\leq \bm{m}}\left(\dfrac{1+2q\sum_{r=1}^{\infty}(2r+1)^{q-1}\varphi_{Y(\bm{j})}^{\frac{1}{2}}(r)}{\left|\widetilde{\Gamma}_{\bm{n}}(\bm{j})\right|}\right)^{1/2}
\]
and therefore
\begin{align}\label{estinr}
\mathbb P\left(\left\|\widehat{\sigma}_{\bm{n}}^2-\widetilde{\sigma}_{\bm{n}}^2\right\|_\infty >\varepsilon\right)
\leq \frac{p^2 C_w}{\varepsilon} \max_{1 \le j \le p} \mathbb E^{1/2}(\xi_{\bm{0}}^{(j)})^4 \sum_{|\bm{j}|\leq \bm{m}}\left(\dfrac{1+2q\sum_{r=1}^{\infty}(2r+1)^{q-1}\varphi_{Y(\bm{j})}^{\frac{1}{2}}(r)}{\left|\widetilde{\Gamma}_{\bm{n}}(\bm{j})\right|}\right)^{1/2}.
\end{align}
To estimate the right-hand side consider the decomposition
\[
\sum_{r=1}^{\infty}(2r+1)^{q-1}\varphi_{Y(\bm{j})}^{\frac{1}{2}}(r)
=\sum_{r=1}^{2j^{\star}}(2r+1)^{q-1}\varphi_{Y(\bm{j})}^{\frac{1}{2}}(r)
+\sum_{r=2j^{\star}+1}^{\infty}(2r+1)^{q-1}\varphi_{Y(\bm{j})}^{\frac{1}{2}}(r)
\eqqcolon S_1+S_2.
\]
Since the $\varphi$-mixing coefficients are decreasing with $\varphi(0)=1$, we obtain
\[
S_1\leq\sum_{r=1}^{2j^{\star}}(2r+1)^{q-1}\varphi_{Y(\bm{j})}^{\frac{1}{2}}(0)
=\sum_{r=1}^{2j^{\star}}(2r+1)^{q-1}
\leq 2j^{\star}(4j^{\star}+1)^{q-1}.
\]
As the sum in (\ref{estinr}) only ranges over $\bm{j}\in\mathbb Z^q$ with $|\bm{j}|\leq\bm{m}$ we know that $j^{\star}\leq m^{\star}$. This implies
\[
S_1\leq 2m^{\star}(4m^{\star}+1)^{q-1}\leq c_1(m^{\star})^q
\]
for some finite constant $c_1\in\mathbb R$ depending only on the dimension $q$ of the random field.
For the estimation of $S_2$ we use again (\ref{Ymix}) and get
\begin{align*}
S_2&\leq\sum_{r=2j^{\star}+1}^{\infty}(2r+1)^{q-1}\varphi_{\xi}^{\frac{1}{2}}(r-2j^{\star})\\&
=\sum_{r=1}^{\infty}(2(r+2j^{\star})+1)^{q-1}\varphi_{\xi}^{\frac{1}{2}}(r)
\leq(4j^{\star})^{q-1}\sum_{r=1}^{\infty}(2r+2)^{q-1}\varphi_{\xi}^{\frac{1}{2}}(r).
\end{align*}
Since $\sum_{r=1}^{\infty}(2r+2)^{q-1}\varphi_{\xi}^{\frac{1}{2}}(r)$ is finite by assumption, we can find a constant $c_2\in\mathbb R$ such that for all $|\bm{j}|\leq\bm{m}$ we have
\[
S_2\leq c_2(m^{\star})^{q-1}.
\]
Putting things toghether we obtain
\begin{align}\label{S1S2}
\sum_{r=1}^{\infty}(2r+1)^{q-1}\varphi_{Y(\bm{j})}^{\frac{1}{2}}(r)\leq c_3(m^{\star})^q
\end{align}
for $c_3\in\mathbb R$.

Finally, observe that
\begin{align}\label{prod}
\left|\widetilde{\Gamma}_{\bm{n}}(\bm{j})\right|
\geq\prod_{i=1}^q(n_i-j^{\star})
\geq\prod_{i=1}^q(n_{\star}-m^{\star})
=(n_{\star}-m^{\star})^q
\end{align}
for all $\bm{j}\in\mathbb Z^q$ with $|\bm{j}|\leq\bm{m}$. Combining (\ref{S1S2}) and (\ref{prod}) with (\ref{estinr}) we obtain
\begin{align*}
\mathbb P\left(\left\|\widehat{\sigma}_{\bm{n}}^2-\widetilde{\sigma}_{\bm{n}}^2\right\|_\infty>\varepsilon\right)
&\leq\frac{p^2C_w}{\varepsilon} \max_{1 \le j \le p} \mathbb E^{1/2}(\xi_{\bm{0}}^{(j)})^4 \sum_{|\bm{j}|\leq \bm{m}}\left(\dfrac{1+2qc_3(m^{\star})^q}{(n_{\star}-m^{\star})^q}\right)^{1/2}\\&
\leq\frac{C_w}{\varepsilon} \max_{1 \le j \le p} \mathbb E^{1/2}(\xi_{\bm{0}}^{(j)})^4 \left(\prod_{i=1}^q(2m_i+1)\right)\left(\dfrac{1+2qc_3(m^{\star})^q}{(n_{\star}-m^{\star})^q}\right)^{1/2}.
\end{align*}
As
$
\prod_{i=1}^q(2m_i+1)
\leq (2m^{\star}+1)^q
\leq c_5(m^{\star})^q
$
for some $c_5\in\mathbb R$, we can find a constant $C\in\mathbb R$ such that
\[
\mathbb P\left(\left\|\widehat{\sigma}_{\bm{n}}^2-\widetilde{\sigma}_{\bm{n}}^2\right\|_\infty>\varepsilon\right)
\leq C\left(\frac{(m^{\star})^3}{n_{\star}-m^{\star}}\right)^{q/2}
=C\left(\frac{n_{\star}}{(m^{\star})^3}-\frac{1}{(m^{\star})^2}\right)^{-q/2}.
\]
The last expression tends to zero for $\bm{n}\to\infty$ since by assumption
$
\left(m^{\star}\right)^3 / n_{\star} =o(1).
$
We obtain $\| \widehat{\sigma}_{\bm{n}}^2-\widetilde{\sigma}_{\bm{n}}^2 \|_\infty \stackrel{\mathbb P}\to 0$ which completes the proof.
\end{proof}

\begin{proof}[Proof of Theorem \ref{consistencyvariancel2}]
We have
$
\mathbb E\left[\left(\widehat{\sigma}_{\bm{n}}^2-\sigma^2\right)^2\right]=\mathbb V\textnormal{ar}\left(\widehat{\sigma}_{\bm{n}}^2\right)+\textnormal{Bias}^2\left(\widehat{\sigma}_{\bm{n}}^2\right),
$
where $\textnormal{Bias}^2\left(\widehat{\sigma}_{\bm{n}}^2\right)= ( \widetilde{\sigma}_{\bm{n}}^2-\sigma^2)^2\to 0$ as $\bm{n}\to\infty$ by the proof of Theorem~\ref{consistencyvariance}. For the variance we obtain by the Cauchy-Schwarz inequality and the uniform boundedness of the weights $w_{\bm{m}}(\bm{j})$ that
\begin{align}\label{var11}
\mathbb V\textnormal{ar}\left(\widehat{\sigma}_{\bm{n}}^2\right)
& \leq \mathbb E\left[\sum_{|\bm{j}|\leq \bm{m}}w^2_{\bm{m}}(\bm{j})\left(\widehat{\gamma}_n(\bm{j})-\gamma(\bm{j})\right)^2\sum_{|\bm{j}|\leq \bm{m}}1\right]\notag\\&
\leq c_1(m^{\star})^q\sum_{|\bm{j}|\leq \bm{m}}\mathbb E\left[\left(\widehat{\gamma}_n(\bm{j})-\gamma(\bm{j})\right)^2\right]
\end{align}
for some constant $c_1\in\mathbb R$. By the proof of Theorem~\ref{consistencyvariance} we moreover have
\begin{align}\label{var22}
\sum_{|\bm{j}|\leq \bm{m}}\mathbb E\left[\left(\widehat{\gamma}_n(\bm{j})-\gamma(\bm{j})\right)^2\right]
\leq c_2(m^{\star})^q\frac{(m^{\star})^q}{(n_{\star}-m^{\star})^q}
\end{align}
for some constant $c_2\in\mathbb R$. By combining (\ref{var11}) with (\ref{var22}) we finally obtain
$
\mathbb V\textnormal{ar}\left(\widehat{\sigma}_{\bm{n}}^2\right)\leq C\left(\frac{(m^{\star})^3}{n_{\star}-m^{\star}}\right)^q =o(1), 
$
by assumption, for some $C\in\mathbb R$. This completes the proof.
\end{proof}
\begin{proof}[Proof of Corollary \ref{autocovest}]
This is a direct consequence of the proof of Theorem~\ref{consistencyvariance}.
\end{proof}

Let us now show the consistency of the estimator, when centering the terms at the temporal average, under the multicplicate model.

\begin{proof}[Proof of Theorem \ref{unknownrefsignal}]
$\check{\gamma}_{\bm n}(\bm j)$ satisfies the decomposition
\begin{align}\label{gammadecomposition}
\check{\gamma}_{\bm n}(\bm j)=\widetilde{\gamma}_{\bm n}(\bm j)+\widetilde{R}_{\bm n}(\bm j),\quad \widetilde{R}_{\bm n}(\bm j)=R_{\bm n}(\bm j)+R_{\bm n}(\bm 0)+\overline{R}_{\bm n},
\end{align}
where
\[
\widetilde{\gamma}_{\bm n}(\bm j)\coloneqq\frac{1}{\left|\widetilde{\Gamma}_{\bm n}(\bm j)\right|}\sum_{\bm i\in\widetilde{\Gamma}_{\bm n}}\xi_{\bm i}\xi_{\bm{i+j}}',
\]
and
\begin{align*}
R_{\bm n}(\bm j)\coloneqq -\frac{1}{\left|\widetilde{\Gamma}_{\bm n}(\bm j)\right|}\sum_{(i_1,\overline{\bm{\iota}})\in\widetilde{\Gamma}_{\bm n}(\bm j)}\xi_{\bm{i+j}}\overline{\xi}_{\cdot,\overline{\bm{\iota}}}',\quad\overline{R}_{\bm n}\coloneqq\frac{1}{\left|\widetilde{\Gamma}_{\bm n}(\bm j)\right|}\sum_{(i_1,\overline{\bm{\iota}})\in\widetilde{\Gamma}_{\bm n}(\bm j)} \overline{\xi}_{\cdot,\overline{\bm{\iota}}} \overline{\xi}_{\cdot,\overline{\bm{\iota}}}' .
\end{align*}
(\ref{gammadecomposition}) induces the decomposition
\[
\check{\sigma}^2_{\bm n} = \sum_{|\bm j|\leq\bm m}w_{\bm m}(\bm j)\widetilde{\gamma}_{\bm n}(\bm j)+R_{\bm n}^{\sigma},\quad R_{\bm n}^{\sigma}\coloneqq R_{\bm n}^{\prime}+R_{\bm n}^{\prime\prime},
\]
with
\[
R_{\bm n}^{\prime}\coloneqq\sum_{|\bm j|\leq\bm m}w_{\bm m}(\bm j)R_{\bm n}(\bm j),\quad R_{\bm n}^{\prime\prime}\coloneqq\sum_{|\bm j|\leq\bm m}w_{\bm m}(\bm j)(R_{\bm n}(\bm 0)+\overline{R}_{\bm n}).
\]
Noting that Theorems \ref{consistencyvariance} and \ref{consistencyvariancel2} apply to the random field $\{\xi_{\bm i}:\bm i\in\mathbb Z^q\}$, the assertion follows if $R_{\bm n}^{\sigma}=o_{\mathbb P}(1)$, as $\bm n\to\infty$. We treat $R_{\bm n}^{\prime}$ as $R_{\bm n}^{\prime\prime}$ can be dealt with in a similar way. Observe that with $\bm j=(j_1,\overline{\bm j}), \overline{\bm j}=(j_2,\ldots,j_q)$, arranging terms leads to
\begin{align*}
R_{\bm n}(\bm j)&=-\frac{1}{\left|\widetilde{\Gamma}_{\bm n}(\bm j)\right|}\sum_{(i_1,\overline{\bm{\iota}})\in\widetilde{\Gamma}_{\bm n}(\bm j)}\varepsilon^{(T)}_{i_1+j_1}\varepsilon^{(S)}_{\overline{\bm{\iota}}+\overline{\bm j}}\frac{1}{n_1}\sum_{l=1}^{n_1}\varepsilon^{(T)}_l (\varepsilon^{(S)}_{\overline{\bm{\iota}}})'\\&
%
%
=-\left(\frac{1}{n_1}\sum_{l=1}^{n_1}\varepsilon^{(T)}_l\right)\left(\frac{1}{n_1-|j_1|}\sum_{i_1=1}^{n_1-|j_1|}\varepsilon^{(T)}_{i_1+j_1}\right)\widetilde{\gamma}_{\bm n}^{(S)}(\overline{\bm j}),
\end{align*}
where
\[
\widetilde{\gamma}_{\bm n}^{(S)}(\overline{\bm j})\coloneqq\frac{1}{\prod_{l=2}^q(n_l-|j_l|)}\sum_{\overline{\bm{\iota}}\in\widetilde{\Gamma}_{\bm n}(\overline{\bm j})} \varepsilon^{(S)}_{\overline{\bm{\iota}}}(\varepsilon^{(S)}_{\overline{\bm{\iota}}+\overline{\bm j}})'.
\]
Put $\widetilde{\gamma}^{(S)}(\overline{\bm j})\coloneqq\mathbb E(\varepsilon^{(S)}_{\bm 0}(\varepsilon^{(S)}_{\overline{\bm j}})')$,
$
\overline{S}_{\bm n}\coloneqq\frac{1}{n_1}\sum_{l=1}^{n_1}\varepsilon^{(T)}_l $, and $ \overline{S}_{\bm n}(j_1)\coloneqq\frac{1}{n_1-|j_1|}\sum_{l=1}^{n_1-|j_1|}\varepsilon^{(T)}_l.
$
Let us denote by $ R_{\bm n}^{(\nu,\mu)}(\bm j) $, $ \widetilde{\gamma}^{(S)}(\overline{\bm j})_{\nu\mu} $ and $ \widetilde{\gamma}^{(S)}(\overline{\bm j})_{\nu\mu} $ the $(\nu, \mu$)th element of the corresponding (random)  $p \times p $ matrices, $ 1 \le \nu, \mu \le p $. By Lemma \ref{rphi1234}(c) and independence, we obtain
\begin{align*}
& \max_{1 \le \nu, \mu \le p}
\mathbb E\left|R_{\bm n}^{(\nu,\mu)}(\bm j)\right| \leq
\max_{1 \le \nu, \mu \le p}
\mathbb E\left(\left|\overline{S}_{\bm n}\right|\left|\overline{S}_{\bm n}(j_1)\right|\right)\left(\mathbb E\left|\widetilde{\gamma}_{\bm n}^{(S)}(\overline{\bm j})_{\nu\mu}-\widetilde{\gamma}^{(S)}(\overline{\bm j})_{\nu\mu}\right|+\left|\widetilde{\gamma}^{(S)}(\overline{\bm j})_{\nu\mu} \right|\right)\\& \qquad
\leq \max_{1 \le \nu, \mu \le p}
\sqrt{\mathbb E\left(\overline{S}_{\bm n}^2\right)\mathbb E\left(\overline{S}_{\bm n}^2(j_1)\right)}\left(\sqrt{\mathbb E\left(\widetilde{\gamma}_{\bm n}^{(S)}(\overline{\bm j})_{\nu\mu}-\widetilde{\gamma}^{(S)}(\overline{\bm j})_{\nu\mu}\right)^2}+\left|\widetilde{\gamma}^{(S)}(\overline{\bm j})_{\nu\mu}\right|\right)\\& \qquad
\leq \max_{1 \le \nu, \mu \le p}
\frac{K}{\sqrt{n_1(n_1-|j_1|)}}\left(\sqrt{\mathbb E\left(\widetilde{\gamma}_{\bm n}^{(S)}(\overline{\bm j})_{\nu\mu}-\widetilde{\gamma}^{(S)}(\overline{\bm j})_{\nu\mu}\right)^2}+\left|\widetilde{\gamma}^{(S)}(\overline{\bm j})_{\nu\mu}\right|\right)
\end{align*}
with $K\coloneqq A(1,\varphi_{\varepsilon^{(T)}})\mathbb E\left((\varepsilon_0^{(T)})^2\right)$. Hence, for any $\varepsilon>0$
\[
\mathbb E \| R_{\bm n}' \|_\infty \leq
\frac{KC_w}{ n_1^{1/2}} \sum_{\nu, \mu=1}^p  \sum_{|\bm j|\leq\bm m}\sqrt{\mathbb E\left(\widetilde{\gamma}_{\bm n}^{(S)}(\overline{\bm j})_{\nu\mu}-\widetilde{\gamma}^{(S)}(\overline{\bm j})_{\nu\mu}\right)^2}
+\frac{KC_w}{ n_1^{1/2}} \sum_{\nu, \mu=1}^p \sum_{|\bm j|\leq\bm m}\left|\widetilde{\gamma}^{(S)}(\overline{\bm j})_{\nu\mu}\right|.
\]
Since $\sum_{\overline{\bm j}\in\mathbb Z^{q-1}}\left|\widetilde{\gamma}^{(S)}(\overline{\bm j})_{\nu\mu}\right|<\infty$ and $ (m^\star)^2 / n_1 = o(1) $, one may now argue as in the proof of Theorem \ref{consistencyvariance}, cf. (\ref{stochconv}), to show that
$ \mathbb E \| \check{\sigma}_{\bm n}^2 - \sigma^2 \|_\infty = o(1) $, which completes the proof.
\end{proof}

Next we establish the results on the asymptotic distribution of the estimators.

\begin{proof}[Proof of Lemma~\ref{CLTLemma}, Theorem \ref{CLT} and Theorem \ref{CLTmultiv}]
	Recall that $ Y_{\bm i}( \bm j ) = \xi_{\bm i} \xi_{\bm i + \bm j} - \gamma( \bm j ) $. We have 
	\begin{align*}
	\sqrt{ | \widetilde{\Gamma}_{\bm n}( \bm j ) | } ( \wh{\gamma}_{\bm n}( \bm j ) - \gamma ( \bm j) ) 
	&= \frac{1}{\sqrt{ | \widetilde{\Gamma}_{\bm n}( \bm j ) | }} \sum_{\bm i \in \wt{\Gamma}_{\bm n}(\bm j )} Y_{\bm i}( \bm j )  \\
	& =  \frac{1}{\sqrt{ | \widetilde{\Gamma}_{\bm n}( \bm j ) | }} \sum_{\bm i \in \bm 1: \bm n} Y_{\bm i}( \bm j ) -  R_{\bm n}^{(1)}(\bm j) \\
	& = \frac{1}{ \sqrt{| \bm n |} } \sum_{\bm i \in \bm 1: \bm n} Y_{\bm i}( \bm j )  -  R_{\bm n}^{(1)}(\bm j)  +  R_{\bm n}^{(2)}(\bm j),
	\end{align*}
	where
	\begin{align*}
	R_{\bm n}^{(1)}( \bm j ) & = \frac{1}{\sqrt{ | \widetilde{\Gamma}_{\bm n}( \bm j ) | }}
	\sum_{\bm i \in \bm 1:\bm n \backslash \wt{\Gamma}_{\bm n}( \bm j) } Y_{\bm i}( \bm j ) 
	\end{align*}
	and
	\begin{align*}
	R_{\bm n}^{(2)}(\bm j) & = \left( \sqrt{ \frac{ | \bm n| }{ | \wt{\Gamma}_{\bm n}( \bm j) | } } - 1 \right) \frac{1}{\sqrt{| \bm n |}} \sum_{\bm i \in \bm 1: \bm n} Y_{\bm i}( \bm j ).
	\end{align*}
	Because $ Y_{\bm i}( \bm j )  $ is a strictly stationary $ \varphi $-mixing random field taking values in $ \R^p $ with asymptotic variance $ \zeta_{\bm j}^2 > 0 $, since (\ref{CovFunctionB_univ}) defines a positive definite quadratic form, and satisfying (\ref{condphimix}), the Cram\'er-Wold device and  \cite{Deo} show that
	\[
	\frac{1}{\sqrt{| \bm n |}} \sum_{\bm i \in \bm 1: \bm n} Y_{\bm i}( \bm j )  
	\Rightarrow B_{\bm j},
	\]
	as $ \bm n \to \infty $, for each $ | \bm j | \le \bm m $. This immediately implies
	$ \max_{| \bm j | \le \bm m} \| R_{\bm n}^{(2)}(\bm j) \|_\infty = o_{\mathbb P}(1) $. 
	Observe that $ \bm 1: \bm n \backslash \wt{\Gamma}_{\bm n}( \bm j ) $ is a finite
	union of index rectangles $ I_{\bm n}^{(1,\ell)} $, $ \ell = 1, \dots, L $, each of which has at least
	one index, denoted by $ k_\ell $, ranging over $ n_{k_\ell}-j_{k_\ell}+1, \dots, n_{k_\ell} $ for some $ k_\ell \in \{ 1, \dots, q \} $. 
	Therefore, $ | I_{\bm n}^{(1,\ell)} | \le j_{k_\ell} \prod_{\nu=1, \nu \not= k_\ell}^{q} (n_\nu - j_\nu)  = o( | \bm n | ) $, as $ \bm n \to \infty $.
	It follows that  for each $ \nu, \mu \in \{1, \dots, p \} $,
	\[ | \wt{\Gamma}_{\bm n}(\bm j) |^{-1}  E | R_{\bm n}^{(1,\ell)}(\bm j)_{\nu\mu}  |^2 = O\left( j_{k_\ell} / n_{k_\ell}  \sqrt{ \mathbb E( Y_{\bm 0}( \bm j)_{\nu\mu} )^2 } \right),
	\]
	where $ R_{\bm n}^{(1,\ell)}(\bm j)_{\nu\mu} = \sum_{\bm i \in I_{\bm n}^{(1,\ell)} } Y_{\bm i}( \bm j )_{\nu\mu} $
	for $ \ell = 1, \dots, L $. Consequently, we obtain the estimate
	\begin{align*}
	  \mathbb P\left(  \max_{| \bm j | \le \bm m} \left\| R_{\bm n}^{(1)}( \bm j ) \right\|_\infty > \varepsilon \right)
	  & \le  \sum_{| \bm j| \le \bm m} \sum_{\nu, \mu=1}^p \sum_{\ell=1}^L \frac{ \mathbb E \left| R_{\bm n}^{(1,\ell)}( \bm j )_{\nu\mu} \right|^2 }{ | \wt{\Gamma}_{\bm n}( \bm j ) | \varepsilon^2 } \\
	  & = O\left( \sum_{| \bm j| \le \bm m} \sum_{\nu, \mu=1}^p \sum_{\ell=1}^L \frac{ j_{k_\ell} }{ n_{k_\ell}  } 
	  \sqrt{ \mathbb E( Y_{\bm 0}( \bm j)_{\nu\mu} )^2 }  \right) \\
	  & = o(1),
	\end{align*}
	as $ \bm n \to \infty $, for all $ \varepsilon > 0 $, such that
	\[
	  \max_{|\bm j| \le \bm m} \| R_{\bm n}^{(1)}( \bm j ) \|_\infty = o_{\mathbb P}(1),
	\] 
	as $ \bm n \to \infty $. Hence, we may change the scaling factor to $ \sqrt{|\bm n|} $ and shall now establish the weak convergence
	\begin{equation}
	\label{ShowWeakConv}
		\left( \sqrt{ | \bm n | } ( \wh{\gamma}_{\bm n}( \bm j ) - \gamma( \bm j) ) \right)_{|\bm j| \le \bm m} 
		\Rightarrow  \mathcal{S}',
	\end{equation}
	as $ \bm n \to \infty $, where $ \mathcal{S}' $ denotes the process $ \mathcal{S} $ for constant weights $ w_{\bm m}(\bm j ) = 1 $, $ | \bm j | \le \bm m $. 	This follows, if we prove 
	\begin{equation}
	\label{CLTmToShow}
	  \sum_{| \bm j | \le \bm m} \sum_{\nu,\mu=1}^p \lambda_{\bm j}^{(\nu,\mu)} | \bm n |^{-1/2} \sum_{ {\bm i} \in 1:{\bm n}} Y_{\bm i}( \bm j )_{\nu \mu}
	  \stackrel{d}{\to} 
	  \sum_{| \bm j | \le \bm m} \sum_{\nu,\mu=1}^p \lambda_{\bm j}^{(\nu,\mu)}  B_{\bm j}^{(\nu,\mu)}
	\end{equation}
	for all  arrays of coefficients $ \bm \lambda  = \{  \lambda_{\bm j}^{(\nu,\mu)} : 1 \le \nu, \mu \le p, | \bm j | \le \bm m \} $ of real numbers such that the variance of the limiting random variable is positive, by virtue of the  Cram\'er-Wold device, see e.g. \cite[Chapter~29.5]{Davidson1994}.
	Observe that the real-valued random field
	\[
	  Z_{\bm i}( {\bm \lambda} ) =  \sum_{| \bm j | \le \bm m}  \sum_{\nu,\mu=1}^p \lambda_{\bm j}^{(\nu,\mu)} Y_{\bm i}( \bm j )_{\nu \mu}, \qquad \bm i \in \Z^q,
	\] 
	is strictly stationary and $ \varphi$-mixing with mixing coefficients $ \varphi_{\bm \lambda}( r ) $ satisfying $  \varphi_{\bm \lambda}( r )  \le \varphi(r-m^\star) $, which implies
	 $ \sum_{r=1}^\infty r^{q-1} \varphi^{1/2}_{\bm \lambda}(r) < \infty $. Its asymptotic variance is positive, since the quadratic form induced by (\ref{CovFunctionB}) is positive definite. Hence we may apply \cite{Deo} and can conclude that the CLT holds true with asymptotic variance
	 \[
       \eta^2_{\bm \lambda} =  \sum_{\bm i \in \Z^q} \mathbb E( Z_{\bm 0}(  {\bm \lambda} ) Z_{\bm i}(  {\bm \lambda} ) ),
	 \]
	 where
	 \begin{align*}
	   \mathbb E( Z_{\bm 0}(  {\bm \lambda} ) Z_{\bm i}(  {\bm \lambda} ) )
	   &=
	   \sum_{| \bm j | \le \bm m} \sum_{| \bm k | \le \bm m} \sum_{\nu,\mu=1}^p \sum_{\nu',\mu'=1}^p
	   \lambda_{\bm j}^{(\nu,\mu)} \lambda_{\bm k}^{(\nu',\mu')}
	   \mathbb E( Y_{\bm 0}( \bm j )_{\nu \mu} Y_{\bm i}( \bm k )_{\nu' \mu'}). \\
	\end{align*}
	Hence,
	\begin{align*}
	   \sum_{ {\bm i} \in \Z^q} \mathbb E( Z_{\bm 0}(  {\bm \lambda} ) Z_{\bm i}(  {\bm \lambda} ) )
	   &=
	   \sum_{| \bm j | \le \bm m} \sum_{| \bm k | \le \bm m} \sum_{\nu,\mu=1}^p \sum_{\nu',\mu'=1}^p
	   \lambda_{\bm j}^{(\nu,\mu)} \lambda_{\bm k}^{(\nu',\mu')}
	   \sum_{ {\bm i} \in \Z^q}  \mathbb E( Y_{\bm 0}( \bm j )_{\nu \mu} Y_{\bm i}( \bm k )_{\nu' \mu'}).
	\end{align*}
	Substituting
	\[
	 	\mathbb E ( B_{\bm j}^{(\nu \mu)} B_{\bm k}^{(\nu' \mu')} ) = \sum_{ {\bm i} \in \Z^q}  \mathbb E( Y_{\bm 0}( \bm j )_{\nu \mu} Y_{\bm i}( \bm k )_{\nu' \mu'}),
	\]
	cf. (\ref{CovFunctionB}), it follows that 
     \[
       \eta_{\bm \lambda}^2 = \Var \left(  \sum_{| \bm j | \le \bm m}  \sum_{\nu, \mu = 1}^p \lambda_{\bm j}^{(\nu,\mu)}   B_{\bm j}^{(\nu \mu)} \right)
     \] 
     such that (\ref{ShowWeakConv}) is shown. 
 	Now the first assertion, i.e. the case of general weights $ w_{\bm m}( \bm i ) $, follows by an application of the continuous mapping theorem. The second assertion follows by noting that
	\[
	\sqrt{ | \bm n |} ( \wh{\sigma}_{\bm n}^2 - \mathbb E(  \wh{\sigma}_{\bm n}^2) ) 
	= \sum_{|\bm j| \le \bm m} w_{\bm m}( \bm j)  \sqrt{ \frac{ | \bm n| }{ \wt{\Gamma}_{\bm n}( \bm j ) } } \sqrt{ | \widetilde{\Gamma}_{\bm n}( \bm j ) | } ( \wh{\gamma}_{\bm n}( \bm j ) - \gamma_{\bm n}( \bm j) ) 
	\]
	and the fact that $ | \bm n | / | \wt{\Gamma}_{\bm n}( \bm j ) | \to 1 $, as $ \bm n \to \infty $. This completes the proof.
\end{proof}

Lastly, we prove the results about subsampling.

\begin{proof}[Proof of Theorem~\ref{Subsampling}]
	We show the result for $ \widehat{\theta}_{\bm n} = \widehat{\theta}_{\bm n}( Y_{\bm i}( \bm j) :  \bm 0 < \bm i \le \bm n ) $, $ \bm j $ fixed, since then the corresponding results for $ \widehat{\theta}_{\bm n} = \widehat{\theta}_{\bm n}( Y_{\bm i}( \bm j), |\bm j | \le \bm m :  \bm 0 < \bm i \le \bm n ) $ and $ \widehat{\theta}_{\bm n} = \widehat{\theta}_{\bm n}( \xi_{\bm i} :  \bm 0 < \bm i \le \bm n ) $ follow along the same lines. In \cite{PolitisRomanoWolf} the authors use the strong mixing coefficient defined by
	\[
	\widehat{\alpha}_{Y(\bm j)}(k; l_1 ) = \sup_{\bm E_2 = \bm E_1 + \bm t \atop \bm E_1 \subseteq \wt{\Gamma}_{\bm n}( \bm j ),  \bm t \in \wt{\Gamma}_{\bm n}( \bm j ) }
	\left\{ | \mathbb P(A_1 \cap A_2 ) - \mathbb P(A_1) \mathbb P(A_2) | : {A_1 \in \mathcal{E}_1, A_2 \in \mathcal{E}_2, \atop
		| \bm E_1 | \le l_1, d_\infty(\bm E_1, \bm E_2 ) \ge r } \right\}.
	\]
	Here $ \mathcal{E}_i = \sigma( Y_{\bm i}( \bm j) : \bm i \in \bm E_i ) $, $ i = 1,2 $, and
	the distance between two (index) sets $ \bm A,  \bm B \subset \Z^q $ is defined as 
	\begin{equation}
	\label{DefSupDistance} 
	d_\infty(\bm A ,\bm B ) = \inf \{ d_\infty(\bm a,\bm b ) : \bm a \in \bm A, \bm b \in \bm B \},
	\end{equation}
	using the sup distance $ d_\infty( \bm a, \bm b ) = \max_{1 \le j \le q} | a_j -  b_j | $ between two points $ \bm a = (a_1, \dots, a_q) $ and $ \bm b = (b_1, \dots, b_q) $. 
	
	If $ \bm E_1 $ and $ \bm E_2 $ have sup distance $ \ge r $, then there exists some coordinate $ k \in \{1, \dots, q \} $, such that one set is an element of $ \mathcal{A}^-(k;l) $ and the other set is an element of $ \mathcal{A}^+(k;l+r) $, where the $ \sigma $-fields $ \mathcal{A}^{\pm} $ are defined in (\ref{DefA-}) and (\ref{DefA+}). By shifting both sets, we can assume that $ l = 0 $. Denote the shifted sets by $ \bm E_1' $ and $ \bm E_2' $ and the associated $ \sigma $-fields by $ \mathcal{E}_1' $ and $ \mathcal{E}_2' $. If $ A \in \mathcal{E}_1 $ and $ B \in \mathcal{E}_2 $, then there is some Borel function $h$ such that
	\[
	1_{A} 1_{B} = h( Y_{\bm i}( \bm j) : \bm i \in \bm E_1, \bm i \in \bm E_2 )
	\stackrel{d}{=} h( Y_{\bm i}( \bm j) : \bm i \in \bm E_1', \bm i \in \bm E_2' ).
	\]
	Hence 
	\[
	\sup_{A \in \mathcal{E}_1, B \in \mathcal{E}_2 } | \mathbb P(B |A) - \mathbb P(B) |
	= 
	\sup_{A \in \mathcal{E}_1', B \in \mathcal{E}_2' } | \mathbb P(B|A) - \mathbb P(B) |,
	\]
	and we may conclude that
	\[
	\varphi( \mathcal{E}_1, \mathcal{E}_2 ) = \sup_{A \in \mathcal{E}_1, B \in \mathcal{E}_2} | \mathbb P(B|A) - \mathbb P(B) | \le \varphi(k, r - 2j_\star ) \le \varphi( r-2 j_\star ).
	\]
	Using the inquality $ \alpha( \mathcal{E}_1, \mathcal{E}_2 ) \le (1/2) \varphi(  \mathcal{E}_1, \mathcal{E}_2 ) $, see e.g. \cite[Theorem~25.16]{Bradley}, we therefore obtain the estimate
	\begin{equation}
	\label{UpperBoundAlphaHat}
	\widehat{\alpha}_{Y(\bm j)}( \cdot; | \bm b| ) \le (1/2) \varphi( \cdot -2 j_\star ).
	\end{equation}
	Put $ N_j = n_j - b_j + 1 $, $ j = 1, \dots, q $, and $ \bm N = (N_1, \dots, N_q) $. In \cite{PolitisRomanoWolf} it is shown that $ L_{\bm n,\bm b}(x) $ converges in probability to $ J(x,P) $, if (\ref{Subs1}) holds and  the mixing condition
	\[
	\frac{1}{| \bm N|} \sum_{k=1}^{N^\star}  k^{q-1} \widehat{\alpha}_\xi(k;|\bm b|) \to 0,
	\]
	as $ \bm n \to \infty $, is satisfied. By virtue of this estimate, the assertion can be shown as follows: Denote by $ \wt{L}_{\bm n, \bm b} $ the quantity $ L_{\bm n, \bm b} $ with the centering $ \wh{\theta}_{\bm n} $  replaced by $ \theta( \mathbb{P} ) $. Since $ \mathbb E( \wt{L}_{\bm n, \bm b}(x) ) = J_{\bm b}(x, \mathbb P) \stackrel{\mathbb P}{\to} J(x,\mathbb P) $, as $ \bm b \to \infty $, it suffices to show that $ \Var( \wt{L}_{\bm n, \bm b}(x) ) = o(1) $, as $ \bm n \to \infty $. Letting
	\[
	I_N = \{ \bm i \in \wt{\Gamma}_{\bm n}(\bm j) : | \bm i | \le \bm N \}
	\qquad \text{and} \qquad
	I^* = \{ \bm i \in  \wt{\Gamma}_{\bm n}(\bm j) : | \bm i | \le \bm b  \}
	\]
	and decomposing the variance as  $
	\Var( \wt{L}_{\bm n, \bm b}(x) ) = A^* + A
	$
	with
	\[
	A^* = \frac{1}{| \bm N |} \sum_{\bm i \in I^*} \prod_{j=1}^q \frac{N_j-|i_j|}{N_j} c( \bm i),
	\qquad
	A = \frac{1}{| \bm N |} \sum_{\bm i \in I_{\bm N} \backslash I^*} \prod_{j=1}^q \frac{N_j-|i_j|}{N_j} c( \bm i),
	\]
	where $ c( \bm i ) = \Cov( 1( |\bm b |^{-1/2}( \wh{\theta}_{{\bm n}, \bm b, \bm 0} - \theta(\mathbb P) ) \le x),  1( |\bm b |^{-1/2}( \wh{\theta}_{{\bm n}, \bm b, \bm i} - \theta(\mathbb P) ) \le x )  ) $,
	\cite{PolitisRomanoWolf} establish the estimates
	\[
	| A^* | = O\left( \prod_{j=1}^q b_j / (n_j-b_j) \right) = o(1),
	\]
	as $ \bm n \to \infty $, and
	\[
	| A | \le c_2 \frac{1}{| \bm N|} \sum_{k= \trunc{ b^\star / h_\star } + 1}^{N^\star} k^{q-1} \widehat{\alpha}_{Y(\bm j)}( k h_\star - b^\star; | \bm b | ),
	\]
	for some constant $ c_2 $, under the condition (\ref{Subs1}). The proof of the first assertion can now be completed by estimating the last expression using (\ref{UpperBoundAlphaHat}). 
	
	Recall that a sequence of distribution functions  converges in all continuity points of the limit function, if and only if the associated sequence of quantile functions converges in all continuity points of the limiting quantile function, see \cite[Lemma~21.2]{vanderVaart}. Consequently, since by continuity of $ J( x, \mathbb P ) $, $x \in \R $, 
	\[
	\sup_{x \in \R} | J_{\bm n}(x, \mathbb P) - J( x, \mathbb P ) | \stackrel{\mathbb P}{\to} 0,
	\]
	as $ \bm n \to \infty $, we obtain
	\[
	\sup_{\gamma \in (0,1)} | q_{\bm n, \bm b}(\gamma) - q(\gamma) | \stackrel{\mathbb P}{\to} 0,
	\]
	as $ \bm n \to \infty $.  Therefore,
	\[
	|\bm n|^{1/2} ( \wh{\theta}_{\bm n} - \theta(\mathbb P) ) - q_{\bm n, \bm b}(\gamma) \Rightarrow \mathcal{R} - q(\gamma),
	\]
	as $ \bm n \to \infty $, where $ \mathcal{R} \sim J(x, \mathbb P )  $, which implies
	\[
	\mathbb P( |\bm n|^{1/2} ( \wh{\theta}_{\bm n} - \theta(\mathbb P) )  \le  q_{\bm n, \bm b}(\gamma) ) \to \mathbb P( \mathcal{R} \le q( \gamma ) ) = \gamma, 
	\]
	as $ \bm n \to \infty $, which proofs the remaining assertions.
\end{proof}

\subsection{Proofs of Section~\ref{ImpEst}}

Let us now establish the consistency of the thresholding estimators of the asymptotic variance.

\begin{proof}[Proof of Theorem \ref{consistencysigmath} and Theorem~\ref{consistencysigmath_mult}] Let us first consider the univariate case $ p = 1 $.
	We show that $|\widehat{\sigma}_{\bm{n}}^2-\widehat{\sigma}_{\bm{n},th}^2|\to 0$ in probability and then the assertion follows with Theorem~\ref{consistencyvariance}. First, we have
	\begin{align*}
	|\widehat{\sigma}_{\bm{n}}^2-\widehat{\sigma}_{\bm{n},th}^2|
	&\leq\left|\sum_{|\bm{j}|\leq \bm{m}}w_{\bm{m}}(\bm{j})\left(\widehat{\gamma}_{\bm{n}}(\bm{j})-\gamma(\bm{j})\right)\right|
	+\left|\sum_{|\bm{j}|\leq \bm{m}}w_{\bm{m}}(\bm{j})\left(\gamma(\bm{j})-\gamma(\bm{j})g\left(\widehat{\gamma}_{\bm{n}}(\bm{j}),c_{\bm{n}}(\bm{j})\right)\right)\right|\\&
	+\left|\sum_{|\bm{j}|\leq \bm{m}}w_{\bm{m}}(\bm{j})\left(\gamma(\bm{j})-\widehat{\gamma}_{\bm{n}}(\bm{j})\right)g\left(\widehat{\gamma}_{\bm{n}}(\bm{j}),c_{\bm{n}}(\bm{j})\right)\right|
	\eqqcolon I_1+I_2+I_3.
	\end{align*}
	Analogously to the proof of Theorem~\ref{consistencyvariance} we directly obtain $I_1\stackrel{\mathbb P}\to 0$ and $I_3\stackrel{\ mathbb P}\to 0$ for $\bm{n}\to\infty$, as $g$ is bounded, so we only have to consider $I_2$. With the Markov inequality we obtain
	\begin{align*}
	\quad \mathbb P\left(|I_2|>\varepsilon\right)&
	=\mathbb P(|\sum_{|\bm{j}|\leq \bm{m}}w_{\bm{m}}(\bm{j})\gamma(\bm{j})\left(1-g\left(\widehat{\gamma}_{\bm{n}}(\bm{j}),c_{\bm{n}}(\bm{j})\right)\right)|>\varepsilon)\\&
	\leq
	\frac{1}{\varepsilon}
	\sum_{|\bm{j}|\leq \bm{m}}|w_{\bm{m}}(\bm{j})||\gamma(\bm{j})|\mathbb E\left(1-g\left(\widehat{\gamma}_{\bm{n}}(\bm{j}),c_{\bm{n}}(\bm{j})\right)\right)\\&
	\leq
	\frac{C_w}{\varepsilon}\sum_{\bm{j}\in\mathbb Z^q}|\gamma(\bm{j})|\mathbb E\left(1-g\left(\widehat{\gamma}_{\bm{n}}(\bm{j}),c_{\bm{n}}(\bm{j})\right)\right)\mathds{1}_{\left\{|\bm{j}|\leq\bm{m}\right\}}
	=\frac{C_w}{\varepsilon}\sum_{\bm{j}\in\mathbb Z^q} f_{\bm{n}}(\bm{j})
	\end{align*}
	with 
	\[
	f_{\bm{n}}(\bm{j})=|\gamma(\bm{j})|\mathbb E\left(1-g\left(\widehat{\gamma}_{\bm{n}}(\bm{j}),c_{\bm{n}}(\bm{j})\right)\right)\mathds{1}_{\left\{|\bm{j}|\leq\bm{m}\right\}}.
	\]
	We now want to apply the dominated convergence theorem. As $\left|f_{\bm{n}}(\bm{j})\right|\leq\left|\gamma(\bm{j})\right|$ with $\sum_{\bm{j}\in\mathbb Z^q}\left|\gamma(\bm{j})\right|<\infty$ by Lemma~1~(a) in \cite{Deo} we already have a convergent majorant and since 
	\[
	\mathbb E\left(g\left(\widehat{\gamma}_{\bm{n}}(\bm{j}),c_{\bm{n}}(\bm{j})\right)\right)\to 1,
	\]
	as $\bm{n}\to\infty$, by Assumption~3 the assertion follows. For the multivariate case denote the $ (\nu,\mu)$th element of $ \widehat{\sigma}_{\bm{n}}^2 $ by $ (\widehat{\sigma}_{\bm{n}}^2)_{\nu\mu}$,  $ ( \widehat{\sigma}_{\bm{n},th}^2 )_{\nu\mu} $ denotes the corresponding entry of  $ \widehat{\sigma}_{\bm{n},th}^2 $, $ 1 \le \nu, \mu \le p $. Now observing that
	\[
	  \| \widehat{\sigma}_{\bm{n}}^2-\widehat{\sigma}_{\bm{n},th}^2 \|_\infty \le \sum_{ \nu, \mu = 1}^p | (\widehat{\sigma}_{\bm{n}}^2 )_{\nu\mu} - (\widehat{\sigma}_{\bm{n},th}^2 )_{\nu\mu} |	  
	\]
	the consistency follows from the above estimates elaborated for the univariate case.
\end{proof}
\begin{proof}[Proof of Corollary \ref{consistencysigmac}]
	We only have to show that the special choice of $g$ in Corollary~\ref{consistencysigmac} fulfills Assumption~3 and then the assertion directly follows by Theorem~\ref{consistencysigmath}.
	This means that we have to show that 
	\[
	\mathbb E\left(1-g\left(\widehat{\gamma}_{\bm{n}}(\bm{j}),c_{\bm{n}}(\bm{j})\right)\right)=P\left(\left|\widehat{\gamma}_{\bm{n}}(\bm{j})\right|\leq c_{\bm{n}}(\bm{j})\right)\to 0
	\]
	for all $\bm{j}\in\mathbb Z^q$. 
	For that observe that by Lemma~5.3 in \cite{PrauseSteland} and the assumptions of the corollary we have
	\[
	|\widehat{\gamma}_{\bm{n}}(\bm{j})|-c_{\bm{n}}(\bm{j})\to|\gamma(\bm{j})|-c(\bm{j})\eqqcolon X
	\]
	in probability, as $\bm{n}\to\infty$. This implies the convergence in distribution, i.e.\ if $F_{\left|\widehat{\gamma}_{\bm{n}}(\bm{j})\right|- c_{\bm{n}}(\bm{j})}$ denotes the distribution function of $\widehat{\gamma}_{\bm{n}}(\bm{j})-c_{\bm{n}}(\bm{j})$ and $F_X$ the one of $X$, we have
	\[
	\mathbb P\left(\left|\widehat{\gamma}_{\bm{n}}(\bm{j})\right|\leq c_{\bm{n}}(\bm{j})\right)
	=F_{\left|\widehat{\gamma}_{\bm{n}}(\bm{j})\right|- c_{\bm{n}}(\bm{j})}(0)
	\to F_X(0),
	\]
	as $\bm{n}\to\infty$, if zero is a continuity point of $F_X$. But since
	\begin{align*}
	F_X(x)=\begin{cases}
	0,  & x<|\gamma(\bm{j})|-c(\bm{j})\\
	1, & x\geq|\gamma(\bm{j})|-c(\bm{j})
	\end{cases}
	\end{align*}
	and $c(\bm{j})<|\gamma(\bm{j})|$ by assumption this is fulfilled. Moreover, the last inequality also implies that $F_X(0)=0$. Thus we have $\mathbb P\left(\left|\widehat{\gamma}_{\bm{n}}(\bm{j})\right|\leq c_{\bm{n}}(\bm{j})\right)\to 0$ for all $\bm{j}\in\mathbb Z^q$ as $\bm{n}\to\infty$ which completes the proof.
\end{proof}

\section*{Acknowledgments} The authors thank an anonymous associate editor and an anonymous referee for their helpful comments. Part of the work of the second author has been supported by a grant from Deutsche Forschungsgemeinschaft (DFG), grant No. STE 1034/11-1, which he gratefully acknowledges.

\end{document}